\setlist[description]{leftmargin=\parindent,labelindent=\parindent}
\tikzset{every picture/.style={baseline=-.65ex}}
\tikzset{ext/.style={circle, draw,inner sep=1pt},int/.style={circle,draw,fill,inner sep=1pt},nil/.style={inner sep=1pt}}
\tikzset{exte/.style={circle, draw,inner sep=3pt},inte/.style={circle,draw,fill,inner sep=3pt}}
\tikzset{diagram/.style={matrix of math nodes, row sep=3em, column sep=2.5em, text height=1.5ex, text depth=0.25ex}}
\tikzset{diagram2/.style={matrix of math nodes, row sep=0.5em, column sep=0.5em, text height=1.5ex, text depth=0.25ex}}
\tikzset{every loop/.style={draw}}
\newtheorem{thm}{Theorem}[section]
\newtheorem{prop}[thm]{Proposition}
\newtheorem{lem}[thm]{Lemma}
\newtheorem{cor}[thm]{Corollary}
\newtheorem{conj}[thm]{Conjecture}
\theoremstyle{definition}
\newtheorem{example}[thm]{Example}
\newtheorem{rem}[thm]{Remark}
\newtheorem*{thm*}{Theorem}
\numberwithin{equation}{section}
\newcommand{\s}{\mathsf{S}}
\newcommand{\X}{\mathcal{X}}
\newcommand{\Q}{\mathcal{Q}}
\newcommand{\gr}{\mathrm{gr}}
\newcommand{\T}{\mathcal{T}}
\newcommand{\zz}{\mathbb{Z}}
\newcommand{\cc}{\mathbb{C}}
\newcommand{\qq}{\mathbb{Q}}
\newcommand{\C}{\mathcal{C}}
\renewcommand{\ss}{\mathbb{S}}
\newcommand{\M}{\mathcal{M}}
\newcommand{\A}{\mathcal{A}}
\newcommand{\F}{\mathsf{F}}
\newcommand{\vv}{\mathbb{V}}
\renewcommand{\P}{\mathcal{P}}
\renewcommand{\tilde}{\widetilde}
\DeclareMathOperator{\Aut}{Aut}
\DeclareMathOperator{\SL}{SL}
\DeclareMathOperator{\im}{Im}
\DeclareMathOperator{\coker}{coker}
\DeclareMathOperator{\Sym}{Sym}
\DeclareMathOperator{\id}{id}
\DeclareMathOperator{\Sp}{Sp}
\newcommand{\FA}{\mathbf{FA}}
\newcommand{\FS}{\mathbf{FS}}
\newcommand{\FB}{\mathbf{FB}}
\newcommand{\Vect}{\mathcal{V}\mathit{ect}}
\newcommand{\hboxtimes}{\widehat\boxtimes}
\newcommand{\Mb}{\overline{\M}}
\newcommand{\Ind}{\mathrm{Ind}}
\newcommand{\GK}{\mathsf{GK}}
\definecolor{Gray}{gray}{0.9}
\newcolumntype{g}{>{\columncolor{Gray}}c}
\newcolumntype{M}{V{5cm}} %
\newcommand{\Com}{\mathsf{Com}}
\author{Samir Canning}
\author{Hannah Larson}
\author{Sam Payne}
\author{Thomas Willwacher}
\thanks{
S.C. was supported by a Hermann-Weyl-Instructorship from the Forschungsinstitut für Mathematik at ETH Z\"urich and the SNSF Ambizione grant PZ00P2\_223473.
This research was partially conducted during the period H.L served as a Clay Research Fellow. 
S.P. was supported in part by NSF grant DMS--2542134 and conducted parts of this research during a visit to the Institute for Advanced Study supported by the Charles Simonyi Endowment. T.W. has been supported by the NCCR SwissMAP, funded by the Swiss National Science Foundation.}
\title{$\FA$-modules of holomorphic forms on $\Mb_{g,n}$}
\begin{document}

\begin{abstract}
For fixed genus $g$ and varying finite marking set $A$, the gluing and forgetful maps give the spaces of holomorphic forms on $\Mb_{g,A}$ the structure of an $\FA$-module, i.e., a functor from the category of finite sets to vector spaces.
We describe the spaces of holomorphic forms $H^{k,0}(\Mb_{g,A})$ for $k \leq 18$ and all $g$ as $\FA$-modules, and prove that they are simple whenever they are nonzero. Conditional upon the conjectured vanishing of $H^{19,0}(\Mb_{3,15})$ and $H^{20,0}(\Mb_{3,16})$, this description extends to $k = 19$ and $k = 20$, respectively.
\end{abstract}

\maketitle

\setcounter{tocdepth}{1}
\tableofcontents

\section{Introduction}

In this paper, we study the holomorphic $k$-forms on  moduli spaces of stable curves $\Mb_{g,n}$ for $k \leq 20$ and all $g$ and $n$. Our main new results are a complete description of $H^{17,0}(\Mb_{g,n})$ for all $g$ and $n$ and, conditional upon a widely believed vanishing conjecture in genus $3$, a similar description of $H^{19,0}(\Mb_{g,n})$ for all $g$ and $n$.
One of the key insights is that these new results, as well as earlier ones, fit naturally into the framework of \emph{$\FA$-modules}, functors from the category $\FA$ of finite sets with all maps to the category of vector spaces. 

For fixed $g$ and $k$, the system of vector spaces $H^{k,0}(\Mb_{g,A})$ as the marking set $A$ varies naturally form an $\FA$-module, which we denote $H^{k,0}(\Mb_{g,*})$.  %
On objects, the functor $H^{k,0}(\Mb_{g,*})$ sends the finite set $A$ to the vector space $H^{k,0}(\Mb_{g,A})$.  To define the functor on morphisms, we associate to each map of finite sets $f\colon  A \to B$ a composition of gluing and forgetful maps
\[\varphi_f \colon \Mb_{g, B} \times \prod_{b \in B \text{ with }|f^{-1}(b)| \geq 2} \Mb_{0,b' \cup f^{-1}(b)} \to \Mb_{g,A}\]
which forgets $b$ if $f^{-1}(b) = \varnothing$, sends $b$ to $f^{-1}(b)$ if $|f^{-1}(b)| = 1$, and glues $b$ to $b'$ if $|f^{-1}(b)| \geq 2$ (see Example \ref{ex1}).
Pullback then defines our desired morphism
\[H^{k,0}(\Mb_{g,A}) \xrightarrow{\varphi_f^*} H^{k,0}\bigg( \Mb_{g, B} \times \prod_{b \in B \text{ with }|f^{-1}(b)| \geq 2} \Mb_{0,b' \cup f^{-1}(b)}\bigg) \cong H^{k,0}(\Mb_{g,B}).\] 
For $g = 0$ and $1$, we adopt the convention that $\Mb_{1, \varnothing}$ is a point, as is $\Mb_{0,A}$ for $|A| \leq 2$.

Working with $\FA$-modules streamlines both the statements and proofs of our results. The category of $\FA$-modules is not semisimple, but the simple $\FA$-modules are classified. See \cite[Theorem~4.1]{Rains09} and \cite[Theorem~5.5]{WiltshireGordon}. Given a partition $m = \lambda_1 + \ldots + \lambda_\ell$ with $\lambda_1 \geq \cdots \geq \lambda_\ell$, let $V_\lambda$ be the associated irreducible representation of $\mathbb{S}_m$. For each such $\lambda$,
there is a naturally associated $\FA$-module $C_\lambda$, whose definition we review in Section \ref{sec:FAsimple}. For each $n$, its associated $\mathbb{S}_n$-representations are
\[C_\lambda(n) = \begin{cases} 0 & \text{if $n < m$} \\
\Ind_{\mathbb{S}_{m}  \times \mathbb{S}_{n-m}}^{\mathbb{S}_n}(V_\lambda \boxtimes \mathbf{1}) & \text{if $n \geq m$.}
\end{cases}\]
If $\lambda_1 \geq 2$, then $C_\lambda$ is simple. If $\lambda_1 = 1$, then $C_\lambda$ is not simple, but the $\FA$-module
 \[
        \tilde C_{1^m} := \coker\left( C_{1^{m+1}} \to C_{1^{m}}\right)
        \]
is simple. The associated $\mathbb{S}_n$-representations satisfy the following formula
\[\tilde{C}_{1^{m}}(n) = \begin{cases} 0 & \text{if $n < m$} \\
V_{n-m+1,1^{m-1}} & \text{if $n \geq m$.} \end{cases}\]
Here, $V_{n-m+1,1^{m-1}}$ is the irreducible representation associated to the partition whose Young diagram is a hook shape, with $m$ boxes in the first column, as shown.
\begin{figure}[h!]
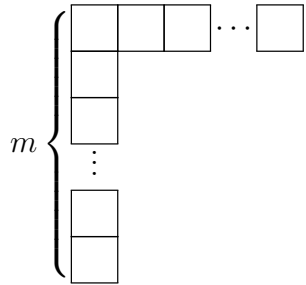

\ytableausetup{mathmode, boxframe=normal, boxsize=6mm}
\[
m \begin{cases} \begin{ytableau}
\phantom{.} &  \phantom{.} & \phantom{.}  & \none[\cdots]  & \\
\\
\\
\none[\vdots] \\
\\
\\
\end{ytableau}
\end{cases}
\]
\caption{The hook shape  partition $n-m+1, 1^{m-1}$.}
\label{fig:hook}
\end{figure}

\begin{thm} \label{thm:upto17}
Suppose $g \geq 1$ and $k \leq 18$.
We have the following equalities of $\FA$-modules
\[
H^{k,0}(\Mb_{g,*}) \cong 
\begin{cases}
\tilde{C}_{1^{k}} & \text{if $k = 11, 15, 17$ and $g = 1$} \\
C_{2^7} & \text{if $k = 17$ and $g = 2$} \\
0 & \text{otherwise.}
\end{cases}\]
\end{thm}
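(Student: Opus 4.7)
The plan is to combine the classification of simple $\FA$-modules recalled above with direct computations of $H^{k,0}(\Mb_{g,n})$ at small $n$. A simple $\FA$-module is determined by the $\mathbb{S}_m$-representation it carries at the minimal $m$ on which it is nonzero: $C_\lambda$ is first nonzero at $n=|\lambda|$ with value $V_\lambda$, and $\tilde{C}_{1^k}$ is first nonzero at $n=k$ with value $V_{1^k}$. So for each pair $(g,k)$ in the range of the theorem, it suffices to (i) prove vanishing of $H^{k,0}(\Mb_{g,n})$ for all $n$ in the cases claimed to be zero, (ii) in the three nonzero cases, identify the value at the minimal $n$ with the predicted irreducible representation, and (iii) verify that $H^{k,0}(\Mb_{g,*})$ is generated as an $\FA$-module in this minimal degree.

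\textbf{Vanishing.} Since pullback along a forgetful map $\Mb_{g,n+1} \to \Mb_{g,n}$ is injective on holomorphic forms, it is enough to establish vanishing at a single sufficiently large $n$. In genus $1$, $H^{k,0}(\Mb_{1,n})$ is controlled by cusp forms for $\SL_2(\mathbb{Z})$ of weight $k+1$, and the classical vanishing $\dim S_w = 0$ for $w \leq 11$ with $w \neq 12$ together with $\dim S_{14}=0$ rules out all $k \leq 18$ except $k \in \{11, 15, 17\}$. In higher genus, I would invoke the known vanishing results for low-weight holomorphic forms on $\Mb_{g,n}$ (Petersen's in genus $2$ and analogous results in higher genus from the authors' earlier work) to dispose of $k \leq 16$ and $k = 18$, leaving only the single nonzero family $g = 2$, $k = 17$.

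\textbf{Identification at minimal $n$.} For $g=1$ and $k \in \{11,15,17\}$, at $n=k$ the space $H^{k,0}(\Mb_{1,k})$ is one-dimensional, spanned by the top holomorphic form built from the unique weight-$(k+1)$ cusp form (one of $\Delta$, $\Delta E_4$, $\Delta E_6$) wedged with an antisymmetric product of differentials in the $k$ labels. This gives the sign representation $V_{1^k}$, matching $\tilde{C}_{1^k}(k)$. For $g=2$ and $k=17$, the minimal value is $n=14$, and one must identify $H^{17,0}(\Mb_{2,14})$ with $V_{2^7}$. This is the representation-theoretic heart of the new content and would be obtained either from a direct $\mathbb{S}_{14}$-character computation or by constructing explicit holomorphic top forms indexed by perfect matchings of the $14$ marked points on hyperelliptic curves.

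\textbf{Generation.} The last step is to check that $H^{k,0}(\Mb_{g,*})$ is generated as an $\FA$-module in its minimal degree, so that it coincides with the corresponding simple $\FA$-module rather than acquiring additional simple summands at higher $n$. This I expect to be the main obstacle. A natural strategy is to compute $\dim H^{k,0}(\Mb_{g,n})$ for each $n$ and match it with $\dim \tilde{C}_{1^k}(n)$ or $\dim C_{2^7}(n)$, using the Leray spectral sequence for forgetful maps combined with the inductive structure already used in the vanishing step, and then to verify by a representation-theoretic comparison that no further simple $\FA$-module summand can arise. Ruling out such extra summands in the $g=2$, $k=17$ case is the most delicate part, since $V_{2^7}$ is the first non-hook representation in the picture and the gluing maps into boundary strata of $\Mb_{2,n}$ must be analyzed explicitly to confirm that every class factors through a $\varphi_f^*$ from degree $14$.
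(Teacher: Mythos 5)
Your treatment of $g \leq 2$ is broadly aligned with the paper's: there, too, one computes $H^{k,0}(\Mb_{1,n})$ via cusp forms and $H^{17,0}(\Mb_{2,14})$ via the $(17,0)$ part of $W_{17}H^3(\A_2,\vv_{7,7}) \otimes V_{2^7} = \overline{\s}_{0,10}\otimes V_{2^7}$ (Petersen's computation of local systems on $\A_2$, plus the Leray spectral sequence for $\C^n \to \M_2$ — your ``perfect matchings'' sketch would need to be replaced by this input), and the generation step is handled cleanly by simplicity: a nonzero map from $C_{2^7}$ (resp.\ $\tilde C_{1^k}$) obtained by adjunction from $\FS$-modules is automatically injective, and an arity-wise dimension count, coming from the fact that every holomorphic $k$-form on $\Mb_{2,n}$ is pulled back from the $(k-3)$-marked spaces, finishes the identification. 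One small logical slip in your vanishing step: injectivity of forgetful pullbacks lets vanishing propagate \emph{downward} in $n$, so checking a single large $n$ does not give vanishing for all $n$; you need an argument valid for arbitrarily large $n$.

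The genuine gap is the case $g \geq 3$, $k = 17$, which your proposal dismisses as ``known vanishing results in higher genus'' but which is precisely the new content of the theorem; no prior work covers it (the citations in the paper cover $k \leq 16$ and $k = 18$ only). There is no direct computation available here. The paper's proof builds a modular subcooperad $\T^\bullet$ of $H^{\bullet,0}(\Mb)$ that is zero in genus $\geq 3$ and runs an Arbarello--Cornalba-type induction on $(g,n)$ (Lemma~\ref{lem:AC}): the base cases $2g-2+n\leq 17$ and $(3,14)$ are handled by rational connectedness of $\Mb_{g,n}$, and the inductive step requires two vanishing inputs — $H^{17}_c(\M_{g,n})^{17,0}=0$ for $2g-2+n>17$, and $H^{18}(\F\T^{17}(g,n))=0$, the latter being a nontrivial graph-complex cohomology computation ($H^1(G_{\tilde 1^{17}})$ and $H^1(G_{2^7})$ vanish in the relevant range, Proposition~\ref{prop:lowdegree}). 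Without some substitute for this inductive mechanism, your proposal cannot conclude $H^{17,0}(\Mb_{g,n})=0$ for $g\geq 3$, and hence cannot prove the theorem.
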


Several cases of the above theorem were previously known. The cases $k \leq 12$ are established in \cite{CanningLarsonPayne}, the cases $k = 13, 14$ in \cite{CLP-STE}, the case $k = 15$ in \cite{CLPW2}, and the cases $k = 16, 18$ in \cite{Fontanari}.
The case $k = 17$ and the unifying description using $\FA$-modules are new.
We will also establish that the pullbacks along gluing maps not appearing in the definition of the $\FA$-module structure (i.e. those besides pullback to a genus $g$ component glued to genus $0$ components) all vanish (Section \ref{sec:pull}). In this way, the $\FA$-module structures given above completely capture the modular cooperad of holomorphic forms in degrees $\leq 18$. 

Conditional upon additional vanishing statements in genus $3$, we extend Theorem~\ref{thm:upto17} to describe the $\FA$-modules of holomorphic $k$-forms for $k \leq 20$. In \cite{BergstromFaber}, Bergstr\"om and Faber use point counting to give a precise conjectural description of $H^\bullet(\Mb_{3,n})$ for $n \leq 14$, and prove that their predictions would follow from certain widely believed conjectures related to the Langlands program.
In private communication, they confirmed that further computations support the following prediction.

\begin{conj} \label{bfc}
The spaces of holomorphic forms $H^{19,0}(\Mb_{3,15})$ and $H^{20,0}(\Mb_{3,16})$ vanish.
\end{conj}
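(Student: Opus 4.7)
The strategy is to complete the arithmetic approach of Bergstr\"om--Faber, in which the vanishing would follow from the absence of Siegel cusp forms of the requisite weight and symmetry. The key principle is that for a smooth projective variety, the space $H^{k,0}$ is the $(k,0)$-Hodge summand of the pure Hodge structure on $H^k$, and under the $\ell$-adic comparison this corresponds to the summand on which Frobenius acts with eigenvalues of a very specific, non-Tate shape. For moduli of genus-$3$ curves, the only non-Tate contributions are conjecturally accounted for by genuine vector-valued Siegel cusp forms on $\Sp_6$, so the conjecture should reduce to ruling out certain explicit Siegel cusp forms.

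First, I would use the boundary stratification of $\Mb_{3,n}$ to express $H^{k,0}(\Mb_{3,n})$ as a combinatorial sum indexed by stable graphs, with each term built from $H^{\bullet,0}(\M_{g_v,n_v})$ at the vertices. By purity, only terms with total degree $k$ and with vertex contributions of pure Hodge type $(k_v,0)$ can appear; for $g_v \leq 2$ these contributions are controlled by Theorem~\ref{thm:upto17} together with classical descriptions of $H^\bullet(\M_{g,n})$ in low genus. The remaining, and decisive, pieces are the open-moduli terms $H^{k',0}(\M_{3,n'})$ for $n' \leq n$.

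Second, I would extend the $\mathbb{F}_q$-point counts of $\M_{3,n'}$ from $n' \leq 14$, carried out in \cite{BergstromFaber}, up to $n' = 15, 16$ for sufficiently many prime powers $q$. Decomposing the resulting counting polynomial into $\mathbb{S}_{n'}$-isotypic pieces and invoking the conjectural Langlands description of $H^\bullet(\M_3,V_\lambda)$ in terms of Siegel modular forms for $\Sp_6$, one identifies exactly which Siegel cusp forms could contribute to $H^{19,0}(\Mb_{3,15})$ and $H^{20,0}(\Mb_{3,16})$. For the relevant weights and symmetry types, the known tables of vector-valued Siegel cusp forms of genus $3$ are empty, which would yield the vanishing.

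The main obstacle is two-fold. First, the identification of cohomology of $\M_3$ with coefficients in symplectic local systems with automorphic forms on $\Sp_6$ is part of the Langlands program for genus-$3$ Siegel modular forms and is presently only conjectural in the generality required; this is the reason the vanishing is stated as a conjecture rather than a theorem. Second, the point-counting step is computationally very expensive at $n = 15, 16$, and extending the existing tables requires substantial new input on the enumeration of genus-$3$ curves with marked points over $\mathbb{F}_q$. An unconditional attack would likely demand a genuinely new geometric input, such as a semistable model of $\Mb_{3,n}$ whose central fibre visibly carries no holomorphic forms in the relevant degrees, or a direct argument within the $\FA$-module framework of this paper showing that any simple summand of $H^{19,0}(\Mb_{3,*})$ or $H^{20,0}(\Mb_{3,*})$ must be ``born'' at $n \geq 16$ or $n \geq 17$, respectively.
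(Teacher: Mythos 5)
The statement you are addressing is Conjecture~\ref{bfc}: the paper does not prove it, and explicitly presents it as an open prediction of Bergstr\"om and Faber, supported by point counts and by the fact that it would follow from widely believed Langlands-type conjectures for genus-$3$ Siegel modular forms. The paper even remarks that new ideas are needed because $\Mb_{3,15}$ and $\Mb_{3,16}$ are of general type, so the rational-connectedness arguments used for the other base cases are unavailable. Your proposal accurately reconstructs the \emph{evidence} behind the conjecture --- point counting on $\M_{3,n}$, decomposition into $\ss_n$-isotypic pieces, and the conjectural dictionary between $H^\bullet(\A_3,\vv_\lambda)$ (equivalently $H^\bullet(\M_3,\vv_\lambda)$) and vector-valued Siegel cusp forms for $\Sp_6(\zz)$ --- but it is not a proof, and you say so yourself: the decisive step, identifying the non-Tate part of the cohomology of these local systems with spaces of genuine Siegel cusp forms, is exactly the unproven conjectural input. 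Ruling out the relevant cusp forms via dimension tables only yields the vanishing \emph{conditionally} on that identification, which is the same conditional status the paper already assigns to the statement.

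Two smaller technical points. First, the boundary stratification does not give a direct-sum decomposition of $H^{k,0}(\Mb_{3,n})$ indexed by stable graphs; what one gets is the Getzler--Kapranov complex $\mathsf{GK}^{k,0}_{3,n}$ whose cohomology computes $F^k\gr_k H^\bullet_c(\M_{3,n})$, so the open-moduli contributions enter through a spectral sequence or long exact sequence rather than as summands, and extracting $H^{19,0}(\Mb_{3,15})$ itself requires controlling the maps in that complex, not just the individual terms. Second, even granting the Langlands input, the point-count data for $n'=15,16$ that you propose to generate has not been established as an unconditional computation of the relevant cohomology; point counts over finitely many $\mathbb{F}_q$ determine the trace of Frobenius, and converting that into a proof that a specific Hodge summand vanishes again requires the conjectural framework. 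So the honest conclusion of your proposal --- that an unconditional proof would need genuinely new geometric input --- is the correct one, and matches why the paper leaves this as Conjecture~\ref{bfc} and only proves Theorem~\ref{thm:19} conditionally upon it.
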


\begin{thm} \label{thm:19}
If $H^{19,0}(\Mb_{3,15}) = 0$ then %
 \[
H^{19,0}(\Mb_{g,*}) \cong
\begin{cases}
\tilde{C}_{1^{19}} & \text{if $g = 1$} \\
C_{2^51^6} & \text{if $g = 2$} \\
0 & \text{otherwise.}
\end{cases}\]
Furthermore, if $H^{20,0}(\Mb_{3,16}) = 0$ then $H^{20,0}(\Mb_{g,*}) = 0$ for all $g$.
 \end{thm}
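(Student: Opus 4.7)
The plan is to determine the $\FA$-modules $H^{19,0}(\Mb_{g,*})$ and $H^{20,0}(\Mb_{g,*})$ genus by genus, following the overall structure of the proof of Theorem~\ref{thm:upto17}. The genus $1$ and $2$ contributions should be unconditional, while the conjectural hypotheses are used only for $g \geq 3$.

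For $g = 1$, the cohomology of $\Mb_{1,n}$ is controlled by cusp forms through Getzler's framework and subsequent refinements. The weight-$20$ cusp form space is one-dimensional, which accounts for exactly the $\tilde{C}_{1^{19}}$ summand in $H^{19,0}(\Mb_{1,*})$; the vanishing of weight-$21$ cusp forms then yields $H^{20,0}(\Mb_{1,*}) = 0$. For $g = 2$, one uses the decomposition of $H^\bullet(\Mb_{2,n})$ via vector-valued genus-$2$ Siegel modular forms together with the $\mathbb{S}_n$-equivariant structure. The seed computation at $n = 16$ identifies $H^{19,0}(\Mb_{2,16})$ with the $\mathbb{S}_{16}$-representation $V_{2^5 1^6}$, and the $\FA$-module structure then forces the full module to be $C_{2^5 1^6}$; a parallel argument gives $H^{20,0}(\Mb_{2,*}) = 0$.

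For $g = 3$ the argument is conditional. Assume $H^{19,0}(\Mb_{3,15}) = 0$. Injectivity of pullback along the forgetful maps $\Mb_{3,15} \to \Mb_{3,n}$ extends this vanishing to all $n \leq 15$, while the dimension bound $\dim \Mb_{3,n} = 6 + n$ forces vanishing for $n < 13$. To propagate to $n \geq 16$, the key input is a boundedness principle, established in the body of the paper: any simple constituent $C_\lambda$ or $\tilde{C}_{1^m}$ appearing in $H^{k,0}(\Mb_{g,*})$ has $|\lambda|$ or $m$ bounded above by an explicit $N(g,k)$, with $N(3,19) = 15$ and $N(3,20) = 16$. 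Since every simple $\FA$-module is nonzero on its generating degree, the established vanishing on $n \leq 15$ then forces the entire $\FA$-module $H^{19,0}(\Mb_{3,*})$ to vanish. The argument for $H^{20,0}(\Mb_{3,*})$ using the vanishing at $\Mb_{3,16}$ is identical. For $g \geq 4$, one extends the arguments of Theorem~\ref{thm:upto17} by two degrees, combining dimension bounds with analysis of the boundary stratification, where the contributing pieces are products involving lower-genus spaces already controlled by Theorem~\ref{thm:upto17}.

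The main obstacle is the boundedness principle for generating degrees of simple constituents of $H^{k,0}(\Mb_{g,*})$. Without such a bound, the conjectural vanishing at a single marking size cannot propagate to vanishing for all larger markings. This boundedness should follow from Hodge-theoretic constraints on which $\mathbb{S}_n$-irreducibles can arise in $H^{k,0}(\Mb_{g,n})$, together with boundary restriction to divisors whose normalizations are products involving $\Mb_{0,*}$ and lower-genus strata, where the $\FA$-module pullback structure is already pinned down by the earlier cases. Once this bound is in place, the conditional genus $3$ statements and, through them, the complete $\FA$-module descriptions in Theorem~\ref{thm:19} follow.
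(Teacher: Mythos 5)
Your treatment of $g\leq 2$ matches the paper's (cusp forms via Proposition~\ref{prop:genus1kforms} and Corollary~\ref{cor:FA genus 1}; Siegel modular forms on $\A_2$ via Propositions~\ref{prop:genus2forms}, \ref{prop:genus217and19} and Corollary~\ref{cor:FA genus 2}), and your handling of the small-$(g,n)$ base cases in genus $\geq 3$ via rational connectedness is also essentially what the paper does. The genuine gap is the step you yourself flag as ``the main obstacle'': the boundedness principle. As stated, it does not follow from anything you have set up, and it is doing all the work. Knowing that $H^{19,0}(\Mb_{3,n})=0$ for $n\leq 15$ says nothing about $n\geq 16$ without further input: an $\FA$-module can vanish in all arities $\leq 15$ and be nonzero thereafter (every $C_\lambda$ with $|\lambda|=16$ does exactly this), and a priori $H^{19,0}(\Mb_{3,*})$ could contain such a constituent, or could fail to be a sum of simples at all since the category of $\FA$-modules is not semisimple. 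Appealing to ``Hodge-theoretic constraints'' and ``boundary restriction'' does not close this, because controlling which simples can occur with which generating arities is essentially equivalent to the theorem you are trying to prove. The same objection applies to your one-sentence dismissal of $g\geq 4$.

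The paper's actual propagation mechanism is entirely different and is the main content of the proof: one packages the candidate answer as a graded modular subcooperad $\Q^\bullet\subset H^{\bullet,0}(\Mb)$ and runs the Arbarello--Cornalba induction of Lemma~\ref{lem:AC}, whose two inputs are (i) $H^{19}(\mathsf{GK}^{19,0}_{g,n})\subset H^{19}_c(\M_{g,n},\cc)=0$ for $2g-2+n>19$ (from \cite{BergstromFaberPayne}), and (ii) the vanishing $H^{20}(\F\Q^{19}(g,n))=0$, which by Remark~\ref{rem:GC relation} reduces to the degree-one cohomology vanishing of the graph complexes $G_{\tilde 1^{19}}$ and $G_{2^51^6}$ established in Proposition~\ref{prop:lowdegree}. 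The five lemma then forces $\Q^{19}(g,n)=H^{19,0}(\Mb_{g,n})$, with the induction running jointly over $(g,n)$ using the K\"unneth decomposition on one- and two-edge boundary strata and the pullback vanishing statements of Lemmas~\ref{lem:genus2togenus1} and~\ref{lem:selfglue17}. Your proposal contains no analogue of this graph-complex vanishing, so the inductive step --- and hence the theorem for $g\geq 3$ and large $n$ --- remains unproven in your outline.
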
 

\noindent The statements in genus $1$ and $2$ are proven unconditionally; in  Section~\ref{sec:gleq2}, we present a more general description of $H^{k,0}(\Mb_{g,*})$ for $g \leq 2$ and all $k$. The conditional statement for $H^{20,0}(\Mb_{g,*})$ is due to Fontanari \cite{Fontanari}. %

\begin{rem}
New phenomena appear in the $\FA$-modules of holomorphic forms of higher degree. We note, in particular, that $H^{21,0}(\Mb_{3,15}) \neq 0$. This follows from work of Cléry, Faber, and van der Geer \cite[Theorem 13.1]{CleryFabervanderGeer}, by an application of the Leray spectral sequence.
\end{rem}

As an application, we use the $\ss_{14}$-action on the holomorphic $17$-forms on $\Mb_{2,14}$ to construct odd cohomology classes on other moduli spaces of stable curves. 
\begin{thm}\label{thm:odd}
    For $g\geq 16$ and $n\geq 0$, $H^{45+2(g-16)}(\Mb_{g,n})\neq 0$. In particular, the point count $\#\Mb_{g}(\mathbb{F}_q)$ is not polynomial in $q$ if $g \geq 16$. 
\end{thm}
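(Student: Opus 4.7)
The plan is to construct a nonzero class in $H^{45+2(g-16)}(\Mb_{g,n})$ by pushing forward a holomorphic $17$-form from a genus-$2$ moduli space along a gluing map that attaches $g-2$ elliptic tails. For $g \geq 16$ and $n \geq 0$ one has $n+g-2 \geq 14 = 2 \cdot 7$, so Theorem~\ref{thm:upto17} produces a nonzero class $\alpha \in H^{17,0}(\Mb_{2,n+g-2})$, which I view as a class in $H^{17}(\Mb_{2,n+g-2};\cc)$.

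Next, I would consider the gluing morphism
\[\xi \colon \Mb_{2,n+g-2} \times (\Mb_{1,1})^{g-2} \longrightarrow \Mb_{g,n}\]
that identifies each of the last $g-2$ markings on the genus-$2$ factor with the unique marked point of an elliptic tail, leaving the first $n$ markings on the genus-$2$ side unchanged. This is a closed immersion of complex codimension $g-2$, so the Gysin pushforward of $\alpha \otimes 1^{\otimes g-2}$ lands in
\[H^{17+2(g-2)}(\Mb_{g,n}) = H^{45+2(g-16)}(\Mb_{g,n}).\]

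The heart of the argument is showing that this pushforward is nonzero, for which I would invoke the self-intersection formula $\xi^*\xi_* = {\cup}\,e(N_\xi)$. The normal bundle $N_\xi$ splits as a direct sum of $g-2$ line bundles, one per node, with first Chern class $-\psi_i^{(C)} - \psi^{(E_i)}$ on the $i$-th summand, where $\psi_i^{(C)}$ is the psi class at the $i$-th gluing marking on $\Mb_{2,n+g-2}$ and $\psi^{(E_i)}$ is the psi class on the $i$-th factor $\Mb_{1,1}$. Expanding the product and organizing by subsets $S \subseteq \{1,\dots,g-2\}$, the summand with $S = \varnothing$ equals
\[\alpha \otimes \prod_{i=1}^{g-2}\bigl(-\psi^{(E_i)}\bigr) \in H^{17}(\Mb_{2,n+g-2}) \otimes \bigotimes_{i=1}^{g-2} H^2(\Mb_{1,1}),\]
and it is nonzero because $\alpha \neq 0$ and each $\psi^{(E_i)}$ generates $H^2(\Mb_{1,1};\qq)$. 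For $S \neq \varnothing$, the corresponding summand has at least one elliptic factor contributing through $H^0$ rather than $H^2$, so it lies in a different Künneth component and cannot cancel the $S = \varnothing$ term. Hence $\xi^*\xi_*(\alpha \otimes 1^{\otimes g-2}) \neq 0$, and therefore $\xi_*(\alpha \otimes 1^{\otimes g-2}) \neq 0$.

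For the point-counting consequence, I would specialize to $n = 0$: if $\#\Mb_g(\mathbb{F}_q)$ were a polynomial in $q$, then by the Weil conjectures the $\ell$-adic cohomology of the smooth proper Deligne--Mumford stack $\Mb_g$ would be pure of Tate type, hence concentrated in even degrees. But $45+2(g-16)$ is odd, contradicting the class just constructed. The main obstacle is the self-intersection computation in the stacky setting, but it reduces to the familiar formulas for psi classes and boundary divisors on $\Mb_{g,n}$; the deduction for point counts is then a formal consequence of purity and the Lefschetz trace formula.
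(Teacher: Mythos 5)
Your construction produces the zero class, so the argument cannot be repaired as written. The map $\xi$ is not a closed immersion: writing $A$ for the stable graph with one genus-$2$ vertex and $g-2$ elliptic tails each attached along a single edge, $\xi$ is invariant under $\Aut(A)\cong \ss_{g-2}$, which simultaneously permutes the tail factors and the corresponding gluing markings on the genus-$2$ factor. Hence $\xi_*=\xi_*\circ\sigma^*$ for every $\sigma\in\Aut(A)$, and therefore
\[
\xi_*\bigl(\alpha\otimes 1^{\otimes(g-2)}\bigr)
=\frac{1}{(g-2)!}\,\xi_*\Bigl(\bigl(\textstyle\sum_{\sigma\in\ss_{g-2}}\sigma^*\alpha\bigr)\otimes 1^{\otimes(g-2)}\Bigr),
\]
so the pushforward only sees the $\ss_{g-2}$-invariant part of $\alpha$ with respect to the action on the gluing markings. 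For $n=0$ and $g=16$ one has $H^{17,0}(\Mb_{2,14})\cong V_{2^7}$, a nontrivial irreducible $\ss_{14}$-representation with no invariant vectors, so $\xi_*(\alpha\otimes 1^{\otimes 14})=0$ --- exactly the case you need for the point-counting consequence. For the same reason your identity $\xi^*\xi_*=\cup\,e(N_\xi)$ is false: the correct statement is the Graber--Pandharipande excess intersection formula, whose ``diagonal'' contribution is a sum over all of $\Aut(A)$ (producing the averaging above) together with further terms indexed by other generic $(A,A)$-structures (graphs where the genus-$2$ vertex or a tail degenerates), none of which appear in your computation.

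The paper's proof is built precisely to evade this representation-theoretic obstruction. It glues seven genus-$1$ curves to $\Mb_{2,14}$ each along \emph{two} points, so that the gluing graph has $\Aut(A)\cong\ss_2\wr\ss_7$; the restriction of $V_{2^7}$ to this subgroup has a one-dimensional invariant subspace, and the invariant form $\eta$ is pushed forward to a nonzero class in $H^{45}(\Mb_{16})$. The excess terms coming from degenerations of the genus-$2$ vertex are killed because $\eta$ pulls back to zero on all boundary divisors of $\Mb_{2,14}$ (the pullback lemmas of Section 3), and a combinatorial check rules out other generic $(A,A)$-structures. The genus is then increased one unit at a time by attaching a single elliptic tail to a class on $\Mb_g$ with no markings, where the gluing graph has trivial automorphism group and the excess computation is explicit. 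If you want an ``attach everything at once'' construction, you must start from an $\Aut(A)$-invariant class, and that forces the doubled edges.
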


\noindent Pikaart was the first to show that $\Mb_g$ has odd cohomology, and hence $\#\Mb_{g}(\mathbb{F}_q)$ is not polynomial, for $g$ sufficiently large \cite{Pikaart}. His construction gives a large bound that was not made explicit. The bound was subsequently reduced to $g \geq 67$ in \cite[Theorem~1.5]{CanningLarsonPayne}. 

\medskip

\noindent {\bf Overview.} Our proofs of Theorems \ref{thm:upto17} and \ref{thm:19} use a reframing of the inductive method of Arbarello and Cornalba for computing cohomology groups of moduli spaces of curves in the language of modular operads. Roughly speaking, given a collection of subspaces of  $H^{k,0}(\Mb_{g,n})$ for all $g, n$, (more precisely, a modular subcooperad of $H^{k,0}(\Mb_{*,*})$) one can confirm whether the subspace is all of $H^{k,0}(\Mb_{g,n})$ by an induction on $g$ and $n$, starting from a relatively small set of base cases. The inductive step involves the vanishing statements for cohomology groups of the Feynman transforms in the degrees corresponding to graphs with at most 1 edge. Thus, a relatively simple cohomology vanishing argument for a graph complex replaces the more hands-on steps applied to compute $H^2$ in \cite[Section~4]{ArbarelloCornalba}.

In Section~\ref{sec:FAclassification}, we recall the classification of simple $\FA$-modules and discuss the appearance of $\FA$-modules in the cohomology of moduli spaces of curves. In Section~\ref{sec:gleq2}, we compute $H^{k,0}(\Mb_{1,n})$ and $H^{k,0}(\Mb_{2,n})$ in terms of simple $\FA$-modules for all $k$ and $n$ and compute the tautological pullbacks that are required to understand the differentials in the Feynman transform. Known results on the unirationality of $\Mb_{g,n}$ for small $g$ and $n$ support the hypothesis that $H^{k,0}(\Mb_{g,n})$ vanishes for $k \leq 20$ and $g \leq 3$.  Indeed, such results provide all of the base cases needed to run our inductive argument for $k \leq 18$. The expected vanishing of $H^{19,0}(\Mb_{3,15})$ and $H^{20,0}(\Mb_{3,16})$, i.e. the hypotheses in Theorem~\ref{thm:19}, are the remaining base cases needed to start the inductive argument for $k = 19$ and $20$, respectively. However, new ideas are needed to prove they vanish, since $\Mb_{3,15}$ and $\Mb_{3,16}$ are of general type \cite{dePreter25}.

In Section~\ref{sec:AC}, we develop the framework for our inductive argument in terms of cohomology vanishing for Feynman transforms. In Section~\ref{sec:1719}, we apply this inductive argument to prove Theorems~\ref{thm:upto17} and \ref{thm:19}, assuming a cohomology vanishing result (Proposition~\ref{prop:lowdegree}). %

In Section~\ref{sec:FA}, we study graph complexes associated to simple $\FA$-modules and prove results about their cohomology in low degrees and genera. In particular, we prove Proposition~\ref{prop:lowdegree}, which completes the proof of Theorems~\ref{thm:upto17} and \ref{thm:19}.   As further applications, we give formulas for 
 Euler characteristics of graph complexes associated to simple $\FA$-modules and prove results about their cohomology in low genera. 
 In Section \ref{sec:app}, we apply these results, together with
 Theorem \ref{thm:upto17}, to give a generating function for the
 Hodge weight $(17,0)$ compactly supported Euler characteristics of $\M_{g,n}$ (Theorem \ref{thm:ec17}) and compute the Hodge weight $(17, 0)$ cohomology of $\M_g$ for $g \leq 13$ (Corollary \ref{lowg17}). 
 
 We conclude in Section~\ref{sec:odd}, with the proof of Theorem~\ref{thm:odd}. %

\subsection*{Acknowledgments} We are most grateful to Jonas Bergstr\"om and Carel Faber for valuable discussions related to this work.

\section{Simple \texorpdfstring{$\FA$}{FA}-modules} \label{sec:FAclassification}

Let $\FA$ be the category of finite sets with all morphisms. %
An $\FA$-module is a functor 
\[
M \colon \FA \to \Vect 
\]
from $\FA$ into the category $\Vect$ of vector spaces. Throughout, all of the $\FA$-modules that we consider take values in vector spaces over a field of characteristic zero, and we use values in $\cc$-vector spaces except where explicitly stated otherwise.

Here we explain how to construct $\FA$-modules from the cohomology of moduli spaces of stable curves and review the classification of simple $\FA$-modules.  %
For the operadically-minded reader, we remark that an $\FA$-module is the same data as a right operadic comodule over the (counital) commutative cooperad $\Com^c$, with $\Com^c(r)=\cc$ for $r\geq 0$.

\subsection{\texorpdfstring{$\FA$}{FA}-modules from moduli spaces of curves}
Here we explain how to use the gluing and forgetful maps to build $\FA$-modules from the cohomology of $\Mb_{g,A}$ for fixed $g$ as the marking set $A$ varies.
We construct the $\FA$-modules associated to the holomorphic part of cohomology, but similar ideas apply to any piece of the Hodge decomposition.

 Fix some $g,k$. 
 We define the functor $M = H^{k,0}(\Mb_{g,*})$ on objects by
\[
M(A) = H^{k,0}(\Mb_{g,A}).
\]
To define $M$ on morphisms we use pullbacks along the gluing and forgetful maps. %
Namely, given $f\colon A \to B$, we define the composition of gluing and forgetful maps
\[\varphi_f \colon \Mb_{g, B} \times \prod_{b \in B \text{ with }|f^{-1}(b)| \geq 2} \Mb_{0,b' \cup f^{-1}(b)} \to \Mb_{g,A}\]
which forgets $b$ if $f^{-1}(b) = \varnothing$, sends $b$ to $f^{-1}(b)$ if $|f^{-1}(b)| = 1$, and glues $b$ to $b'$ if $|f^{-1}(b)| \geq 2$.
For $g = 0$ and $1$, we adopt the convention that $\Mb_{1, \varnothing}$ is a point, as is $\Mb_{0,A}$ for $|A| \leq 2$.
If $f$ is an injection, then $\varphi_f$ is just a forgetful map. If $f$ is a surjection, then $\varphi_f$ is just a gluing map. For an example which is neither an injection or surjection, we have the following.

\begin{example} \label{ex1}
Define $f\colon A = \{a_1, a_2, a_3, a_4, a_5, a_6, a_7\} \to B = \{b_1, b_2, b_3, b_3, b_5, b_6\}$ by
\begin{align*} 
a_1 &\mapsto b_6, &
a_2 &\mapsto b_1, & a_3 &\mapsto b_1, & a_4 &\mapsto b_2, & a_5 &\mapsto b_2, & a_6 &\mapsto b_2, & a_7 &\mapsto b_5.
\end{align*}
The map of stable graphs induced by $\varphi_f$ is pictured below:
\[
\begin{tikzpicture}[shift={(0pt, -.5 cm)}]
\node[ext] (c) at (0,0) {$\scriptstyle g$};
\node[ext] (l) at (120:1.2) {$\scriptstyle 0$};
\node[ext] (r) at (30:1.2) {$\scriptstyle 0$};
\node at ($(l)+(-40:.4)$ ) {$\scriptscriptstyle b_1'$};
\node at ($(c)+(135:.4)$ ) {$\scriptscriptstyle b_1$};
\node at ($(r)+(-120:.4)$ ) {$\scriptscriptstyle b_2'$};
\node at ($(c)+(50:.4)$ ) {$\scriptscriptstyle b_2$};
\node (a2) at ($(l)+(120:.8)$) {$a_2$};
\node (a3) at ($(l)+(60:.8)$) {$a_3$};
\node (b6) at ($(c)+(200:.8)$) {$b_6$};
\node (b5) at ($(c)+(-110:.8)$) {$b_5$};
\node (b4) at ($(c)+(-70:.8)$) {$b_4$};
\node (b3) at ($(c)+(-30:.8)$) {$b_3$};
\node (a4) at ($(r)+(80:.8)$) {$a_4$};
\node (a5) at ($(r)+(25:.8)$) {$a_5$};
\node (a6) at ($(r)+(-30:.8)$) {$a_6$};
\draw (c) edge (l) edge (r) edge (b3) edge (b4) edge (b5) edge (b6)
(l) edge (a2) edge (a3) (r) edge (a4) edge (a5) edge (a6);
\end{tikzpicture}
\quad
\begin{tikzpicture}
    \draw[->] (0,0) -- (3,0);
\end{tikzpicture}
\quad
\begin{tikzpicture}
    \node[ext] (c) at (0,0) {$\scriptstyle g$};
    \node (a1) at ($(c)+(0:.8)$) {$a_1$};
    \node (a2) at ($(c)+(51:.8)$) {$a_2$};
    \node (a3) at ($(c)+(102:.8)$) {$a_3$};
    \node (a4) at ($(c)+(153:.8)$) {$a_4$};
    \node (a5) at ($(c)+(204:.8)$) {$a_5$};
    \node (a6) at ($(c)+(255:.8)$) {$a_6$};
    \node (a7) at ($(c)+(306:.8)$) {$a_7$};
    \draw (c) edge (a1) edge (a2) edge (a3) edge (a4) edge (a5) edge (a6) edge (a7);
\end{tikzpicture}
\]
The map $\varphi_f$ glues $b_1 \sim b_1'$ and $b_2 \sim b_2'$, forgets $\{b_3, b_4\}$, and sends $b_5 \mapsto a_7$ and $b_6 \mapsto a_1$.
\end{example}

We use pullback along $\varphi_f$ to define $M(f)\colon M(A) \to M(B)$ as 
\[H^{k,0}(\Mb_{g,A}) \xrightarrow{\varphi_f^*} H^{k,0}\bigg( \Mb_{g, B} \times \prod_{b \in B \text{ with }|f^{-1}(b)| \geq 2} \Mb_{0,b' \cup f^{-1}(b)}\bigg) \cong H^{k,0}(\Mb_{g,B}).\]
One readily checks that this rule respects compositions of morphisms. Indeed, suppose that $f\colon A \to B$ and $f'\colon B \to C$ are morphisms of finite sets. 
Then we have the following commutative diagram where in the top row the primes on products indicate that they are taken over elements whose preimage under $f'$ or $f$ has size at least $2$
\begin{center}
\begin{tikzcd}
\Mb_{g, C} \times \prod'_{c \in C} \Mb_{0,c' \cup f'^{-1}(c)} \times \prod'_{b \in B} \Mb_{0, b' \cup f^{-1}(c)} \arrow{r}{\varphi_{f'} \times \mathrm{id}} \arrow{d}
& \Mb_{g,B} \times \prod'_{b \in B} \Mb_{0, b' \cup f^{-1}(b)} \arrow{d}{\varphi_f} \\
\Mb_{g, C} \times \prod_{c \in C} \Mb_{0,c' \cup (f' \circ f)^{-1}(c)} \arrow{r}[swap]{\varphi_{f' \circ f}} & \Mb_{g,A}.
\end{tikzcd}
\end{center} 
Above, the left downward arrow glues $b'$ to $b$ if it exists in the marking set of one of the factors of the source and $f'(b)$ otherwise. This map corresponds to contracting edges between genus $0$ components and so induces the identity on $H^{k,0}$. Commutativity of this diagram implies the statement that $M(f') \circ M(f) = M(f' \circ f)$. 

\begin{rem} \label{rem:operadic}
From the operadic viewpoint the above derivation simplifies conceptually: It is well known that the spaces $\Mb_{g,n}$ assemble into a modular operad $\Mb$, and hence the cohomology $H^\bullet(\Mb)$ forms a modular cooperad. The genus zero part of any modular operad is a cyclic operad, and any fixed genus piece of the modular operad is an operadic right module for this cyclic operad. In particular, the collection of spaces $\Mb_{g,*}$ assembles to form an operadic right $\Mb_{0,*}$-module. Dually, the cohomology $H^\bullet(\Mb_{g,*})$ is naturally an operadic right comodule over the cooperad $H^\bullet(\Mb_{0,*})$ (the hypercommutative cooperad). The degree 0 part of the latter is the commutative cooperad, so that by corestriction we obtain the desired commutative comodule structure on $H^\bullet(\Mb_{g,*})$.

The only technical caveat here is that in order for a right operadic commutative comodule structure to be the same as an $\FA$-module structure we need to use the unital version of the commutative operad, containing an operation in arity zero. Hence in the above argument we need to also allow unstable operations in modular operads, in particular those of type $(g,n)=(0,1)$, encoding the operation of forgetting markings.
This can be done without problem, but we leave the corresponding extension of the original definition of modular operad by Getzler and Kapranov to the reader.
\end{rem}

\subsection{Classification of simple \texorpdfstring{$\FA$}{FA}-modules} \label{sec:FAsimple}
We now define certain $\FA$-modules associated to representations of symmetric groups.
To do so, we also need the categories $\FS$ of finite sets with the surjective maps as morphisms and $\FB$ of finite sets with the bijective maps as morphisms.
We define $\FS$-modules (resp. $\FB$-modules) as covariant functors $\FS\to \Vect$ (resp. $\FB\to \Vect$). In particular, $\FB$-modules are also known as symmetric sequences. An $\FB$-module is the same data as a collection $\{\rho_n\}_{n\geq 0}$ of representations of the symmetric groups $\mathbb{S}_n$. We will call $n$ the arity.

Now fix some $m\geq 1$ and let $\rho$ be a representation of the symmetric group $\mathbb{S}_m$. We may consider $\rho$ as an $\FB$-module by setting $\rho_m=\rho$ and all other $\rho_n=0$ above. We may also extend the $\FB$-module $\rho$ to an $\FS$-module also denoted $\rho$ by letting all non-bijective maps act as the zero morphism. Finally we may define an $\FA$-module
\[
C_\rho := \Ind_{\FS}^{\FA} \rho,
\]
where $\Ind_{\FS}^{\FA}$ is the induction functor from $\FS$-modules to $\FA$-modules, which is defined as the left adjoint to the natural forgetful functor $\Vect^{\FA}\to \Vect^\FS$. In category theoretic terms, $\Ind_{\FS}^{\FA}:=\mathrm{Lan}_\iota$ is the left Kan extension along the inclusion $\iota \colon \FS\to \FA$. By abstract categorical results $\mathrm{Lan}_\iota$ exists and there are explicit formulas since $\Vect$ is cocomplete and copowered over sets, see \cite[Section 4.2]{Kelly}. 
Concretely, the representation of the symmetric groups $\mathbb{S}_n$ associated to $C_\rho$ are
\[
C_\rho(n) \cong 
\begin{cases}
    0 & \text{for $n<m$} \\
    \rho & \text{for $n=m$} \\
    \Ind_{\mathbb{S}_m\times \mathbb{S}_{n-m}}^{\mathbb{S}_n} \rho & \text{for $n>m$.}
\end{cases}
\]
To determine the morphisms, it suffices to describe them on a generating set of morphisms in $\FA$. The morphisms for bijections are already described above, so we describe the maps associated to inclusions and surjections.
One can write 
\[
\Ind_{\mathbb{S}_m \times \mathbb{S}_{n-m}}^{\mathbb{S}_n} \rho \cong \bigoplus_{A \subset \{1, \ldots, n\}} \rho,
\] 
where the sum runs over subsets $A$ of size $m$. 
An inclusion $\{1, \ldots, n\} \to \{1, \ldots, n'\}$ induces a map on subsets and thus a corresponding map $C_\rho(n) \to C_{\rho}(n')$.
Any surjection is a composition of bijections and morphisms of the form $B_1 \sqcup B_2 \to B_1 \cup p$ sending $B_1$ identically to $B_1$ and collapsing $B_2$ to $p$. 
The associated map 
\[C_\rho(B_1 \sqcup B_2) = \bigoplus_{A \subset B_1 \sqcup B_2} \rho \to C_\rho(B_1 \cup p) = \bigoplus_{A' \subset B_1 \cup p} \rho\]
is defined on each of the summands of the source as follows. If $|A \cap A_2| \geq 2$, then the $A$ summand is sent to zero. If $|A \cap A_2| = 1$, then the $A$ summand is sent to the $A'$ summand obtained by replacing the unique element of $A \cap A_2$ with $p$. If $|A \cap A_2| = 0$, then $A$ is sent to the $A' = A$ summand.

If $\lambda$ is a partition of $m$ and $\rho_\lambda$ the corresponding irreducible $\mathbb{S}_m$-representation, we write 
$$
C_{\lambda} := C_{\rho_\lambda}.
$$

The category of $\FA$-modules is abelian but not semisimple. However, the simple $\FA$-modules are classified, as follows. Note that $C_{1^m}(m+1)$ contains a $1$-dimensional subspace on which $\ss_{m+1}$ acts by sign, giving rise to a nonzero morphism $C_{1^{m+1}} \to C_{1^m}$. %
We define
\[
        \tilde C_{1^m} := \coker\left( C_{1^{m+1}} \to C_{1^{m}}\right)
\]
The simple $\FA$-modules are classified as follows; see  \cite[Theorem~4.1]{Rains09} and \cite[Theorem~5.5]{WiltshireGordon}.

\begin{thm*}%
\label{thm:WiltshireGordon}
The $\FA$-module $\tilde C_{1^m}$ is simple for $m \geq 0$, as is $C_\lambda$ for  $\lambda \neq 1^m$. Moreover, any simple $\FA$-module is isomorphic to one of these.
\end{thm*}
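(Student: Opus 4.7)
My plan is to classify the simple $\FA$-modules using the induction--restriction adjunction $\Ind_{\FS}^{\FA} \dashv \mathrm{res}$ built into the definition of $C_\rho$, thereby reducing the problem to analyzing the submodule lattice of a single $C_\rho$.

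First, let $M$ be any nonzero $\FA$-module and set $m = \min\{n : M(n)\neq 0\}$. Every non-bijective morphism out of $[m]$ in $\FA$ is either a surjection onto a smaller set (where $M$ vanishes) or an injection into a larger set, so the restriction of $M$ to $\FS$ agrees in arity $m$ with the $\mathbb{S}_m$-representation $M(m)$, with all constraints imposed by surjections $[n] \to [m]$ being automatic. The adjunction therefore yields
\[
\Hom_{\FA}(C_\rho, M) \;\cong\; \Hom_{\mathbb{S}_m}(\rho, M(m))
\]
for every irreducible $\mathbb{S}_m$-representation $\rho$. Choosing $\rho \subset M(m)$ produces a nonzero morphism $C_\rho \to M$; if $M$ is simple then this morphism is surjective. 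Hence every simple $\FA$-module is a simple quotient of some $C_\rho$.

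It remains to analyze $C_\rho$. Because the injections $[m] \hookrightarrow [n]$ send $C_\rho(m) = \rho$ into each of the summands of $C_\rho(n) = \bigoplus_{|A|=m}\rho_A$, the subobject of $C_\rho$ generated by $C_\rho(m)$ is all of $C_\rho$. So to prove $C_\rho$ is simple it suffices to show that every nonzero subobject $N \subseteq C_\rho$ meets $C_\rho(m)$ nontrivially. I would establish this by descent: starting from a nonzero $x \in N(n)$ with $n$ minimal, apply a suitable sequence of elementary collapses $B_1 \sqcup B_2 \to B_1 \cup \{p\}$, using the explicit formula recalled in the excerpt (annihilate summands $\rho_A$ with $|A \cap B_2| \geq 2$, relabel those with $|A \cap B_2| = 1$, preserve those disjoint from $B_2$), to obtain a nonzero element of $N(m)$. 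A careful bookkeeping shows the only possible obstruction to this descent is a system of sign-type antisymmetry relations among the components of $x$; these relations force $x$ to lie in the image of a sign-compatible map from $C_{1^{m+1}}$, which is only possible when $\rho = \mathrm{sgn}_m$. Hence $C_\lambda$ is simple for every $\lambda \neq 1^m$.

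For $\rho = \mathrm{sgn}_m$, the obstruction above is realized by the canonical morphism $C_{1^{m+1}} \to C_{1^m}$ arising from the unique sign summand of $C_{1^m}(m+1) \cong V_{2,1^{m-1}} \oplus V_{1^{m+1}}$, and the same descent analysis identifies its image as the unique maximal proper subobject, so $\tilde C_{1^m}$ is simple. Pairwise non-isomorphism of the resulting simples $\{C_\lambda\}_{\lambda \neq 1^m} \cup \{\tilde C_{1^m}\}_{m \geq 0}$ is detected by the minimal arity $m$ together with the $\mathbb{S}_m$-representation in that arity. The technical heart of this strategy---and the main obstacle---is the combinatorial descent lemma in the previous paragraph: identifying precisely why sign antisymmetry is the \emph{only} obstruction to lowering the arity by collapse. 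This is carried out from different perspectives in \cite[Theorem~4.1]{Rains09} and \cite[Theorem~5.5]{WiltshireGordon}.
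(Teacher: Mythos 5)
The paper does not prove this theorem at all: it is quoted from the literature, with the proof deferred to \cite[Theorem~4.1]{Rains09} and \cite[Theorem~5.5]{WiltshireGordon}. So there is no in-paper argument to compare yours against, and the honest benchmark is whether your write-up is a self-contained proof. It is not. Your first two paragraphs are correct and are the standard reduction: the adjunction $\Hom_{\FA}(C_\rho,M)\cong\Hom_{\ss_m}(\rho,M(m))$ (valid because $M$ vanishes below arity $m$, so the compatibility with surjections out of $[m]$ is vacuous) shows every simple is a quotient of some $C_\rho$, and since $C_\rho$ is generated in arity $m$ by the irreducible $\rho$, simplicity of $C_\rho$ reduces to showing every nonzero submodule is nonzero in arity $m$. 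Your sanity check for $\rho=\mathrm{sgn}_m$ is also right: the sign summand of $C_{1^m}(m+1)$ is killed by every collapse (the two surviving relabelled summands cancel by antisymmetry), which is exactly why $C_{1^m}$ fails to be simple.

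The gap is the sentence beginning ``A careful bookkeeping shows\dots''. That bookkeeping \emph{is} the theorem: one must show that for every $n>m$ and every nonzero $x\in C_\rho(n)$, some composite of collapses sends $x$ to a nonzero element of $C_\rho(m)$ unless $\rho=\mathrm{sgn}_m$ and $x$ lies in the image of $C_{1^{m+1}}$; and in the sign case one must run this analysis at \emph{all} arities $n>m$ (not just $n=m+1$) to see that the image of $C_{1^{m+1}}$ is the unique maximal proper submodule, which is what simplicity of $\tilde C_{1^m}$ requires. None of this is carried out, and you explicitly defer it to the same two references the paper cites. As it stands your proposal is a correct reduction plus a citation, i.e., essentially the same treatment as the paper's, packaged as a proof sketch. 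If you want an actual proof, the descent lemma must be stated precisely and proved --- for instance by analyzing, for each $\lambda$ with $\lambda_1\geq 2$, why the intersection of the kernels of all collapse maps $C_\lambda(n)\to C_\lambda(n-1)$ is zero.
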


\noindent These simple $\FA$-modules are pairwise non-isomorphic, and each is determined by its value (as a symmetric group representation) in the smallest arity where it is nonzero.

The nonzero morphisms $C_{1^{m+1}} \to C_{1^m}$ give rise to an exact sequence of $\FA$-modules 
\begin{equation}\label{equ:C long exact}
\dots  \to C_{1^m} \to C_{1^{m-1}} \to \cdots \to C_{1^2} \to C_1 \to C_{1^0} \to 0.
\end{equation}
Truncating the sequence \eqref{equ:C long exact} gives a finite resolution of $\tilde C_{1^m}$:%
\begin{equation}
\label{equ:tilde C fin res}
0 \to \tilde C_{1^m} \to C_{1^{m-1}} \to \cdots \to C_{1^2} \to C_1 \to C_{1^0} \to 0.
\end{equation}

We conclude this section with a few lemmas on morphisms of $\FA$-modules.

\begin{lem}\label{lem:FA inj}
    Let $M=M_1\oplus\cdots \oplus M_K$ be a sum of simple $\FA$-modules such that for some fixed $r_0$, we have $M_j(r_0)\neq 0$ for all $1\leq j\leq K$. Let $N\subset M$ be an $\FA$-submodule such that $N(r)=0$ for $r\leq r_0$. Then $N\equiv 0$.
\end{lem}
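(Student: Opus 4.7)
The plan is to use the projections onto the simple summands and leverage the simplicity of each $M_j$. For each $j \in \{1,\ldots,K\}$, let $\pi_j \colon M \to M_j$ denote the projection, and consider the restriction $\pi_j|_N \colon N \to M_j$, which is a morphism of $\FA$-modules. The first step is to observe that $\pi_j(N)$ is an $\FA$-submodule of $M_j$, so by simplicity $\pi_j(N)$ is either $0$ or all of $M_j$.

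The crux is ruling out the case $\pi_j(N) = M_j$. Let $r_j$ denote the minimal arity at which $M_j$ is nonzero; such an $r_j$ exists since each simple $\FA$-module is characterized by its value at its smallest nonzero arity. The hypothesis $M_j(r_0) \neq 0$ forces $r_j \leq r_0$. If $\pi_j(N) = M_j$, then evaluating at arity $r_j$ yields a surjection $N(r_j) \twoheadrightarrow M_j(r_j) \neq 0$, which contradicts the assumption that $N(r) = 0$ for all $r \leq r_0$.

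Therefore $\pi_j(N) = 0$ for every $j$, which gives $N \subset \bigcap_j \ker(\pi_j) = 0$, as desired. The only step that requires a moment's thought is that images and kernels in $\Vect^{\FA}$ are computed arity-wise, but this is standard since it is a functor category valued in an abelian category. I do not anticipate any serious obstacle: the lemma follows essentially formally from the classification of simple $\FA$-modules together with the fact that each simple is pinned down by its value in its smallest nonzero arity.
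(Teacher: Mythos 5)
Your proof is correct, and it takes a genuinely different (and arguably cleaner) route than the paper's. The paper argues by induction on the number $K$ of summands: it takes the minimal arity $r$ where $N$ is nonzero, checks whether $N(r)$ meets $M_K(r)$, and either derives a contradiction with the simplicity of $M_K$ from the proper nonzero submodule $N\cap M_K$, or passes to the quotient $M/M_K$ and invokes the inductive hypothesis. You instead apply all $K$ projections $\pi_j\colon M\to M_j$ at once: since images in the functor category $\Vect^{\FA}$ are computed arity-wise, $\pi_j(N)$ is a submodule of the simple module $M_j$ that cannot be all of $M_j$, hence vanishes, and then $N\subset\bigcap_j\ker\pi_j=0$. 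This avoids both the induction and the case analysis, at the cost of nothing; the only ingredient in either argument is that a submodule of a simple $M_j$ vanishing in an arity where $M_j$ does not is proper, hence zero. One small simplification to your write-up: the detour through the minimal nonzero arity $r_j$ of $M_j$ is unnecessary. You can rule out $\pi_j(N)=M_j$ directly by evaluating at $r_0$ itself, since $\pi_j(N)(r_0)=\pi_j(N(r_0))=0$ while $M_j(r_0)\neq 0$; this also removes any reliance on the classification of simple $\FA$-modules, which your argument does not actually need.
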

\begin{proof}
    We proceed by induction on $K$.
    For $K=1$ the statement is clear, since $N\subsetneq M_1$ is by assumption a proper submodule of the simple $\FA$-module $M_1$.
    For $K\geq 2$ let $r$ be the smallest $r$ such that $N(r)\neq 0$. We have $r>r_0$ by assumption.
    If $N(r)\cap M_K(r)\neq 0$, then $N\cap M_K$ is a proper $\FA$-submodule of $M_K$, contradicting simplicity of $M_K$.
    Otherwise replace $M$ by $M/M_K=M_1\oplus\cdots\oplus M_{K-1}$ and replace $N$ by the image $N'$ of $N$ in $M/M_K$. We still have $N'(r)\neq 0$ since $N(r)\not\subset M_K(r)$, and hence the lemma follows by induction. 
\end{proof}
\begin{cor}\label{cor:FA inj}
    Let $M_1,\dots, M_K$ be simple $\FA$-modules corresponding to Young diagrams with $n$ boxes, and let 
    \[
    f: M:=M_1\oplus\cdots \oplus M_K \to M'
    \]
    be a morphism of $\FA$-modules into some other $\FA$-module $M'$.
    If the arity-$n$-map $f(n):M(n)\to M'(n)$ is injective, then $f$ is injective in all arities.
\end{cor}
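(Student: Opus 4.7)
The plan is to reduce immediately to Lemma \ref{lem:FA inj} applied to $N := \ker(f) \subset M$, which is an $\FA$-submodule. The goal becomes showing $N \equiv 0$, and the hypotheses of Lemma \ref{lem:FA inj} match up naturally with $r_0 = n$.

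First I would verify the two hypotheses. For each $j$, the simple $\FA$-module $M_j$ corresponds to a Young diagram with $n$ boxes, so either $M_j \cong C_\lambda$ for some partition $\lambda \vdash n$ with $\lambda_1 \geq 2$, or $M_j \cong \tilde C_{1^n}$. In both cases, the formulas recalled in Section \ref{sec:FAsimple} give $M_j(n) \neq 0$ (indeed, $n$ is precisely the smallest arity where $M_j$ is nonzero). This confirms the first hypothesis of Lemma \ref{lem:FA inj}. For the second hypothesis, note that $M(r) = \bigoplus_j M_j(r) = 0$ whenever $r < n$, since each $M_j$ vanishes below arity $n$; hence $N(r) = 0$ for $r < n$ automatically. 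At arity $n$, the assumption that $f(n)$ is injective gives $N(n) = 0$ directly. Together, these say $N(r) = 0$ for all $r \leq n$.

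Thus Lemma \ref{lem:FA inj}, applied with $r_0 = n$, yields $N \equiv 0$, i.e., $f$ is injective in every arity. Since the argument is a direct verification of the hypotheses of the preceding lemma, there is no real obstacle; the only point one should double-check is that the case $M_j = \tilde C_{1^n}$ (where $\lambda_1 = 1$) is covered, which it is since $\tilde C_{1^n}(n) = V_{1^n} \neq 0$ by the formula in the introduction.
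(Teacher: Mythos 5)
Your proposal is correct and is essentially identical to the paper's proof, which likewise applies Lemma \ref{lem:FA inj} to $N=\ker f$; you have simply written out the verification of its hypotheses (including the $\tilde C_{1^n}$ case) in more detail than the paper does.
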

\begin{proof}
    The $\FA$-submodule $N:=\ker f\subset M$ satisfies the assumptions of Lemma \ref{lem:FA inj}.
\end{proof}

\section{Holomorphic forms when \texorpdfstring{$g\leq 2$}{gleq2}}\label{sec:gleq2}
The moduli space $\Mb_{0,n}$ has no nonzero holomorphic forms of positive degree, as all of its cohomology is algebraic. Here, we describe the holomorphic forms on $\Mb_{1,n}$ and $\Mb_{2,n}$.

\subsection{Holomorphic forms on \texorpdfstring{$\Mb_{1,n}$}{Mb1n}}
We recall well-known results about holomorphic forms on $\Mb_{1,n}$. Let $\s_{k+1}$ be the weight $k$ Hodge structure associated to the space of weight $k+1$ cusp forms for $\SL_{2}(\zz)$. Then $\s_{k+1}$ has Hodge weights in $\{(k,0), (0,k)\}$, and is nonzero exactly when $k\geq 11$ is odd and $k\neq 13$. Let $K^m_n := V_{n-m+1,1^{m-1}}$ denote the $\ss_n$-representation associated to the hook shape whose vertical part has exactly $m$ boxes, as in Figure~\ref{fig:hook}. 
\begin{prop}\label{prop:genus1kforms}
    The $\ss_n$-representation $H^{k,0}(\Mb_{1,n})$ is the $(k,0)$ part of $K^{k}_n\otimes \s_{k+1}\otimes \cc$. 
\end{prop}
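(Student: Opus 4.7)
The plan is to extract the $(k,0)$-Hodge part of the known $\ss_n$-equivariant computation of $H^{\bullet}(\Mb_{1,n})$, which is due to Getzler and Petersen. First, I would split $H^{\bullet}(\Mb_{1,n})$ as an $\ss_n$-equivariant mixed Hodge structure into a tautological summand and a cuspidal summand. The tautological summand is spanned by monomials in boundary and $\psi$-classes, hence lies entirely in Hodge bidegrees of the form $(p,p)$, and therefore contributes nothing to $H^{k,0}$ for $k > 0$. Consequently $H^{k,0}(\Mb_{1,n})$ coincides with the $(k,0)$-part of the cuspidal cohomology in degree $k$.

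Second, I would compute the cuspidal part by applying the Leray spectral sequence to a forgetful morphism $\pi\colon \Mb_{1,n}\to \Mb_{1,1}$ that remembers a single marked point. The cuspidal contributions are controlled by the cohomology of symmetric powers of the relative $H^1$ of the universal elliptic curve, and by Eichler--Shimura these are governed precisely by $\s_{k+1}$. Tracking how $\ss_n$ permutes the $n$ possible choices of distinguished marked point, one verifies that the $\ss_n$-action on the cuspidal piece in degree $k$ factors through the $(k-1)$-st exterior power of the standard $(n-1)$-dimensional representation of $\ss_n$, which is exactly the hook $V_{n-k+1,1^{k-1}}=K^k_n$. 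This identifies the cuspidal cohomology in degree $k$ with the $\ss_n$-representation $K^k_n\otimes \s_{k+1}$, endowed with the Hodge structure inherited from $\s_{k+1}$.

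Taking $(k,0)$-parts is then formal: since $K^k_n$ is purely of Hodge type $(0,0)$, the $(k,0)$-component of $K^k_n\otimes \s_{k+1}\otimes \cc$ equals $K^k_n\otimes \s_{k+1}^{(k,0)}$, which matches the cuspidal contribution computed above. Combined with the vanishing of the tautological part in Hodge bidegree $(k,0)$ for $k>0$, this yields the claimed description of $H^{k,0}(\Mb_{1,n})$.

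The main technical obstacle is the careful $\ss_n$-equivariant bookkeeping in the Leray spectral sequence, namely verifying that the only irreducible $\ss_n$-constituent appearing in the cuspidal cohomology in degree $k$ is the hook $K^k_n$, and that no Eisenstein contributions survive outside the tautological part. In practice I would bypass this by citing the explicit $\ss_n$-equivariant Serre characteristic formula for $\Mb_{1,n}$ established by Petersen (building on Getzler), from which the stated decomposition can be read off directly.
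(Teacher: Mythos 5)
Your route is essentially correct but genuinely different from the paper's. The paper does not touch the full cohomology of $\Mb_{1,n}$ at all: it uses the exact sequence $H^{k-2}(\tilde{\partial\M}_{1,n})\to H^k(\Mb_{1,n})\to W_kH^k(\M_{1,n})\to 0$, observes that the Gysin map is of Hodge type $(1,1)$, and thereby identifies $H^{k,0}(\Mb_{1,n})$ with the $(k,0)$ part of the top-weight cohomology $W_kH^k(\M_{1,n})$ of the \emph{open} moduli space, which is $K^k_n\otimes\s_{k+1}$ by the cited Proposition 2.2 of \cite{CLP-STE}; this kills every tautological and Eisenstein contribution in one stroke and needs no structural theorem about $H^\bullet(\Mb_{1,n})$. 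You instead invoke the tautological-plus-cuspidal decomposition of the full cohomology of the compact space (a stronger input, due to Getzler--Petersen) and then isolate the untwisted $\s_{k+1}$ summand. Two points in your write-up need care. First, the Leray spectral sequence should be run for $\M_{1,n}\to\M_{1,1}$ (or the fibered power of the universal elliptic curve), not for $\Mb_{1,n}\to\Mb_{1,1}$: the latter has reducible degenerate fibers over the boundary and over the diagonals, so the clean decomposition of $R^q\pi_*\cc$ into symmetric powers of the weight-one local system is not available there. Second, your fallback of reading the answer off the $\ss_n$-equivariant Serre characteristic is legitimate only because $\Mb_{1,n}$ is smooth and proper, so each $H^j$ is pure of weight $j$ and the weight-$j$ part of the Euler characteristic recovers $[H^j]$ with no cancellation between degrees; this purity step should be stated, since for a non-proper or singular space the Euler characteristic would not determine the individual cohomology groups. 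With those two repairs your argument goes through, at the cost of heavier inputs than the paper's two-line reduction.
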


\begin{proof}
    There is a short exact sequence
    \[
    H^{k-2}(\tilde{\partial \M}_{1,n})\rightarrow H^{k}(\Mb_{1,n})\rightarrow W_{k}H^{k}(\M_{1,n})\rightarrow 0.
    \]
    The first morphism is of Hodge type $(1,1)$, and hence $H^{k,0}(\Mb_{1,n})$ is identified with the $(k,0)$ part of $W_{k}H^{k}(\M_{1,n})$. By \cite[Proposition 2.2]{CLP-STE}, $W_{k}H^{k}(\M_{1,n})\cong K^{k}_n\otimes \mathsf{S}_{k+1}$.  
\end{proof}
Using Proposition \ref{prop:genus1kforms} and Corollary \ref{cor:FA inj}, we describe the $\FA$-module $H^{k,0}(\Mb_{1,*})$ for all $k$. %

\begin{cor}\label{cor:FA genus 1}
Let $N_k=\dim \s_{k+1} /2$ be the dimension of the space of weight $k+1$-cusp forms. In the category of $\FA$-modules, we have an isomorphism
\[
H^{k,0}(\Mb_{1,*}) \cong
\underbrace{\tilde{C}_{1^{k}} \oplus \cdots \oplus \tilde{C}_{1^{k}}}_{N_k}=\tilde{C}_{1^{k}}\otimes \cc^{N_k}.
\]
\end{cor}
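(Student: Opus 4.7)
The plan is to construct a morphism of $\FA$-modules $\phi \colon \tilde{C}_{1^k}^{\oplus N_k} \to H^{k,0}(\Mb_{1,*})$ that is an isomorphism at the minimal arity $n=k$, and then to invoke Corollary~\ref{cor:FA inj} together with a dimension count to upgrade this to a global isomorphism.

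First I would use Proposition~\ref{prop:genus1kforms} to identify the underlying $\FB$-module structure. The Hodge structure $\s_{k+1}$ has Hodge type $\{(k,0),(0,k)\}$ with $\dim \s_{k+1} = 2N_k$, so its $(k,0)$-part is $N_k$-dimensional; hence $H^{k,0}(\Mb_{1,n}) \cong K^k_n \otimes \cc^{N_k}$ as an $\ss_n$-representation. Since $\tilde{C}_{1^k}(n) = V_{n-k+1, 1^{k-1}} = K^k_n$, this shows that $H^{k,0}(\Mb_{1,*})$ and $\tilde{C}_{1^k}^{\oplus N_k}$ have isomorphic underlying $\FB$-modules, and in particular matching arity-wise dimensions. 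In arity $k$ the hook degenerates to $K^k_k = V_{1^k}$, the sign representation of $\ss_k$.

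Next I would construct $\phi$ using the adjunction defining $C_{1^k} = \Ind_\FS^\FA \rho_{1^k}$, which yields the standard identification $\Hom_\FA(C_{1^m}, M) \cong \Hom_{\ss_m}(V_{1^m}, M(m))$ for any $\FA$-module $M$. Applying $\Hom_\FA(-, M)$ to the right exact sequence $C_{1^{k+1}} \to C_{1^k} \to \tilde{C}_{1^k} \to 0$ yields
\[
0 \to \Hom_\FA(\tilde{C}_{1^k}, M) \to \Hom_{\ss_k}(V_{1^k}, M(k)) \to \Hom_{\ss_{k+1}}(V_{1^{k+1}}, M(k+1)).
\]
For $M = H^{k,0}(\Mb_{1,*})$, the rightmost term vanishes because $H^{k,0}(\Mb_{1,k+1}) \cong V_{2,1^{k-1}} \otimes \cc^{N_k}$ contains no copy of the sign representation $V_{1^{k+1}}$ of $\ss_{k+1}$. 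Consequently
\[
\Hom_\FA(\tilde{C}_{1^k}, H^{k,0}(\Mb_{1,*})) \cong \Hom_{\ss_k}(V_{1^k}, V_{1^k}^{\oplus N_k}) \cong \cc^{N_k},
\]
and any choice of basis yields a morphism $\phi \colon \tilde{C}_{1^k}^{\oplus N_k} \to H^{k,0}(\Mb_{1,*})$ whose arity-$k$ component is an isomorphism onto $H^{k,0}(\Mb_{1,k})$.

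Finally I would invoke Corollary~\ref{cor:FA inj}: each summand $\tilde{C}_{1^k}$ is a simple $\FA$-module associated to a Young diagram with $k$ boxes, and $\phi$ is injective at arity $k$, so $\phi$ is injective in every arity. Combined with the dimension match from the $\FB$-module identification, this forces $\phi$ to be an isomorphism of $\FA$-modules. The main subtle point is the vanishing $\Hom_\FA(C_{1^{k+1}}, H^{k,0}(\Mb_{1,*})) = 0$, which ensures that the lift from a morphism out of $C_{1^k}$ to one out of $\tilde{C}_{1^k}$ exists; but this reduces to the one-line Schur's lemma observation that the hook $V_{2,1^{k-1}}$ differs from the sign $V_{1^{k+1}}$.
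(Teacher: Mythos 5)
Your proposal is correct and follows essentially the same route as the paper: construct the map via the adjunction defining $C_{1^k}=\Ind_{\FS}^{\FA}\rho_{1^k}$, observe that it kills the image of $C_{1^{k+1}}$ (because $H^{k,0}(\Mb_{1,k+1})\cong V_{2,1^{k-1}}\otimes\cc^{N_k}$ contains no sign representation) and hence descends to $\tilde C_{1^k}\otimes\cc^{N_k}$, then apply Corollary~\ref{cor:FA inj} at arity $k$ and conclude by the arity-wise dimension count from Proposition~\ref{prop:genus1kforms}. Your explicit computation of $\Hom_{\FA}(\tilde C_{1^k}, H^{k,0}(\Mb_{1,*}))$ via the left-exact Hom sequence is just a slightly more detailed packaging of the paper's descent step.
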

\begin{proof}
By Proposition \ref{prop:genus1kforms}, we have a morphism of $\FS$-modules 
\[
\rho_{1^k}\otimes \cc^{N_k} \to H^{k,0}(\Mb_{1,*}),
\]
with the irreducible representation $\rho_{1^k}$ of $\ss_k$ considered as an $\FS$-module concentrated in arity $k$. By adjunction, we have a morphism of $\FA$-modules 
\[
C_{1^k}\otimes \cc^{N_k} := \Ind_{\FS}^{\FA} \rho_{1^k}\otimes \cc^{N_k} \to H^{k,0}(\Mb_{1,*}).
\]
The image of $C_{1^{k+1}}\otimes \cc^{N_k} \to C_{1^k}\otimes \cc^{N_k}$ is in the kernel of this morphism, and hence it descends to
\[
\tilde C_{1^k} \otimes \cc^{N_k} \to H^{k,0}(\Mb_{1,*}).
\]
This morphism is a bijection in arity $k$, and hence in particular injective in arity $k$ by Proposition~\ref{prop:genus1kforms}.
Hence it is an injection in all arities by Corollary \ref{cor:FA inj}, because $\tilde C_{1^k}$ is simple. The dimensions of both sides agree arity-wise by Proposition \ref{prop:genus1kforms}, so it is an isomorphism. 
\end{proof}

Recall that $N_k \neq 0$ exactly when $k \geq 11$ is odd and $k \neq 13$.
If $11 \leq k \leq 21$ odd and $k \neq 13$, then $N_k = 1$, so $H^{k,0}(\Mb_{1,*})$ is simple. Meanwhile, for odd $k \geq 23$, we have $N_k > 1$, so $H^{k,0}(\Mb_{1,*})$ is not simple. Nevertheless, $H^{k,0}(\Mb_{1,*})$ is the holomorphic part of the simple object $\mathsf{S}_{k+1} \otimes \widetilde{C}_{1^k}$ in the category of functors from $\FA$ to rational Hodge structures, see Remark \ref{rem:motivic} for further discussion.

\subsection{The cohomology of local systems on \texorpdfstring{$\A_2$}{A2}}
We now recall the cohomology of local systems on $\A_2$, the moduli space of principally polarized abelian surfaces, following \cite{Petersen}. In the next section, we will discuss how the holomorphic forms on $\Mb_{2,n}$ come from the cohomology of these local systems.

Let $\pi \colon \X_2\rightarrow \A_2$ be the universal abelian surface, and let $\vv := R^1 \pi_* \cc$.
The local system $\vv$ is associated to the contragredient of the standard representation of $\mathrm{GSp}_{4}(\zz)$. Let $\lambda$ be a partition of length at most 2, with parts $a\geq b\geq 0$. Up to Tate twist, every irreducible local system on $\A_2$ is associated to such a partition via the irreducible representation of highest weight in $\Sym^{a-b} \vv \otimes \Sym^b(\wedge^2 \vv)$. We denote the associated local system by $\vv_{\lambda}$ or $\vv_{a,b}$.

The cohomology groups $H^i(\A_2, \vv_{\lambda})$ carry a mixed Hodge structure of weight $\geq a+b+i$ \cite[p. 233]{FaltingsChai}. %
We will need, in particular, the pure Hodge structure $W_{a+b+3}H^3(\A_2,\mathbb{V}_{\lambda})$, which has Hodge weights in $\{(a+b+3,0)$,$(a+2,b+1)$, $(b+1,a+2)$, and $(0,a+b+3)\}$ \cite{FaltingsChai, Getzlertoprecursion}.

The cohomology in the case of trivial coefficients is well-known. We have $H^{i}(\A_2,\vv_{0,0}) = 0$ unless $i=0,2$. Furthermore, $H^0(\A_2,\vv_{0,0}) = \cc$, and $H^2(\A_2,\vv_{0,0}) = \cc(-1)$. %
Petersen has computed the %
cohomology groups for nontrivial coefficients. We only need the pure weight part. %
For any $k\geq 0$ and $j\geq 3$, let $\mathsf{S}_{j,k}$ be the complexification of the $\ell$-adic Galois representation associated to the space of vector valued Siegel eigenforms for $\Sp_4(\zz)$ of type $\Sym^j\otimes \det^k$ for some chosen prime $\ell$. %
We define  $$\overline{\s}_{j,k} := \gr^W_{j+2k-3} \s_{j,k}.$$ %
We have $\overline{\mathsf{S}}_{j,k}=\mathsf{S}_{j,k}$ unless $j=0$ and $k$ is even. When $j = 0$ and $k$ is even, we have
\[
\s_{0,k} \cong \cc(-k+2)^{\oplus \dim \mathsf{S}_{2k-2}}\oplus \overline{\s}_{0,k} \oplus \cc(-k+1)^{\oplus \dim \mathsf{S}_{2k-2}}.
\]
The additional Tate summands are explained by the structure of Saito--Kurokawa lifts associated to cusp forms for $\SL_2(\zz)$, using the $\ell$-adic counterparts of these local systems \cite[p.~42]{Petersenlocalsystems}.

\begin{prop}\label{prop:pureweightabelian}
Let $(a,b)\neq (0,0)$. Then $H^i(\A_2,\vv_{a,b}) = 0$ unless $a+b$ is even and $i=2,3$. When $i=2,3$, the following holds.
    \begin{enumerate}
        \item The space $W_{a+b+2} H^2(\A_2,\vv_{a,b}) = 0$ unless $a=b=2c$ for some $c\geq 1$, in which case $W_{2c+2} H^2(\A_2,\vv_{2c,2c})$ is pure Tate. %
        \item The space $W_{a+b+3} H^3(\A_2,\vv_{a,b})=\overline{\s}_{a-b,b+3}$. 
    \end{enumerate}
\end{prop}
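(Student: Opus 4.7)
The plan is to reduce the statement to known computations in \cite{Petersen} and \cite{FaltingsChai}, organized into parity vanishing, degree range vanishing, and identification of the pure-weight summands.

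First, I would dispose of the parity vanishing. The element $-I \in \Sp_4(\zz)$ acts trivially on $\A_2$ as a Deligne--Mumford stack but acts on the fibers of $\vv$ by $-1$, hence on $\vv_{a,b}$ by $(-1)^{a+b}$. When $a+b$ is odd, $\vv_{a,b}$ has no nonzero local sections, so $H^i(\A_2, \vv_{a,b}) = 0$ for all $i$.

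Next, I would restrict the range of nonvanishing degrees to $i \in \{2,3\}$. The bound $i \leq 3$ follows from $\dim \A_2 = 3$. The vanishing $H^0(\A_2, \vv_{a,b}) = \vv_{a,b}^{\Sp_4(\zz)} = 0$ is immediate from the nontriviality and irreducibility of $\vv_{a,b}$ as an $\Sp_4(\zz)$-representation. The vanishing of $H^1$ is one of Petersen's outputs, deduced from an Eichler--Shimura-type decomposition together with the absence of Siegel cusp forms of the relevant weight.

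Finally, for the identification of pure-weight pieces, I would appeal directly to \cite{Petersen}, combined with the Faltings--Chai bound \cite{FaltingsChai} that places all Hodge weights in $H^i(\A_2, \vv_{a,b})$ at $\geq a + b + i$. Thus $W_{a+b+i}H^i$ is the minimal-weight (and in the case $i=3$, purest) summand. Part (2) is then the statement that the cuspidal, minimal-weight part of $H^3$ is $\bar\s_{a-b,b+3}$, which is Faltings' identification after discarding the Saito--Kurokawa Tate contributions (which can only appear in the excluded case $a=b=0$). Part (1) reflects that the only Eisenstein contributions attaining the minimal weight $a+b+2$ in $H^2$ come from Siegel-type boundary strata when the highest weight is of the symmetric form $(2c,2c)$, and these are pure Tate.

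The main obstacle is the bookkeeping of cuspidal versus Eisenstein contributions in the weight-filtration spectral sequence for the toroidal compactification, and the correct treatment of the $a=b$ diagonal where Saito--Kurokawa lifts introduce extra Tate summands; the non-formal geometric input is already in \cite{FaltingsChai, Petersen}.
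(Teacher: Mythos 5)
Your overall strategy---reduce everything to Petersen's computation plus the Faltings--Chai weight bound---is the same as the paper's, which simply cites \cite[Theorem~2.1]{Petersenlocalsystems} for the weight-graded compactly supported $\ell$-adic cohomology and then applies comparison theorems and Poincar\'e duality. The parity argument via $-I$ and the vanishing of $H^0$ and $H^1$ are fine. However, two of the details you supply are wrong.

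First, the assertion that ``the bound $i\leq 3$ follows from $\dim \A_2=3$'' is not valid. $\A_2$ is not affine, so Artin vanishing does not apply; the bound one gets from general principles is $i\leq \mathrm{vcd}(\Sp_4(\zz))=4$, not $3$. Concretely, excising the locus of products one finds that $H^4(\A_2,\vv_{a,b})$ is a quotient of $H^2(\A_1\times\A_1,\vv_{a,b}|)^{\ss_2}(-1)$, which is typically nonzero, so the vanishing of $H^4$ (equivalently, via Poincar\'e duality, of $H^2_c$) is a genuine theorem and is exactly one of the outputs of Petersen's computation that you cannot get for free. Second, your parenthetical that the Saito--Kurokawa contributions ``can only appear in the excluded case $a=b=0$'' is false: they occur whenever $a=b$ and $b+3$ is even, and the case $(a,b)=(7,7)$, where $\s_{0,10}$ is generated by the Saito--Kurokawa lift of the weight-$18$ elliptic cusp form, is precisely the case driving the paper's computation of $H^{17,0}(\Mb_{2,14})$. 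The correct reason that part (2) produces $\overline{\s}_{a-b,b+3}=\gr^W_{a+b+3}\s_{a-b,b+3}$ rather than $\s_{a-b,b+3}$ is that the extra Saito--Kurokawa Tate summands sit in weights $a+b+2$ and $a+b+4$ and are therefore discarded by the truncation $W_{a+b+3}H^3$; your argument as written would leave them in.
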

\noindent This is a simplified version of \cite[Theorem 2.1]{Petersenlocalsystems}, which is stated for the weight-graded compactly supported $\ell$-adic cohomology. Proposition~\ref{prop:pureweightabelian} is obtained by taking the pure weight part, tensoring with $\cc$, and applying the comparison theorems and Poincar\'e duality.

\subsection{Holomorphic forms on \texorpdfstring{$\Mb_{2,n}$}{Mb2n}}
We now discuss how holomorphic forms on $\Mb_{2,n}$ arise from the cohomology of local systems on $\A_2$. %
\begin{prop}\label{prop:genus2evenforms}
    If $k\neq 0$ is even, then $H^{k,0}(\Mb_{2,n}) = 0$.
\end{prop}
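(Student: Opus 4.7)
The plan is to reduce to the pure top-weight cohomology of $\M_{2,n}$, decompose it via the Leray spectral sequence for the forgetful map to $\M_2$, and then use the Torelli embedding $\M_2 \hookrightarrow \A_2$ to apply Proposition~\ref{prop:pureweightabelian}. Exactly as in the proof of Proposition~\ref{prop:genus1kforms}, the exact sequence
\[
H^{k-2}(\widetilde{\partial \M}_{2,n}) \to H^k(\Mb_{2,n}) \to W_k H^k(\M_{2,n}) \to 0
\]
has first map of Hodge type $(1,1)$, so $H^{k,0}(\Mb_{2,n})$ is canonically identified with the $(k,0)$-piece of $W_k H^k(\M_{2,n})$, and it suffices to show that this pure piece vanishes for even $k \neq 0$.

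Next, I would consider the forgetful map $p\colon \M_{2,n} \to \M_2$, whose fibers are configuration spaces of $n$ points on a genus $2$ curve. The higher direct images $R^q p_*\cc$ are built from the local system $\vv = R^1\pi_*\cc$ via tensor and exterior operations, and decompose as direct sums of Tate twists of the local systems $\vv_{a,b}$. The Torelli map is an open immersion onto the complement of the product locus $\Sym^2 \A_1 \subset \A_2$, which contributes only Hodge--Tate classes to cohomology with $\vv_{a,b}$-coefficients. Consequently, $W_k H^k(\M_{2,n})$ is assembled, via the Leray spectral sequence, from Tate twists of pure parts of $H^p(\A_2, \vv_{a,b})$ together with Hodge--Tate correction terms. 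The case $n = 0$ reduces directly to the classical fact that $H^\bullet(\Mb_2)$ is pure Tate.

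I would then apply Proposition~\ref{prop:pureweightabelian} to rule out $(k, 0)$-pieces for even $k \neq 0$. A Tate twist sends Hodge type $(r, s)$ to $(r + j, s + j)$, so it can only produce a $(k, 0)$-class from an existing $(k, 0)$-class. The pure parts of $H^0$ and $H^2$ on $\A_2$ with $\vv_{a,b}$-coefficients are Hodge--Tate, giving $(k, 0)$ only when $k = 0$. The pure part of $H^3(\A_2, \vv_{a,b})$ is $\overline{\s}_{a-b, b+3}$, with Hodge types in $\{(a+b+3, 0), (a+2, b+1), (b+1, a+2), (0, a+b+3)\}$; nonvanishing forces $a + b$ to be even, so $(a + b + 3, 0)$ contributes only when $k = a + b + 3$ is odd. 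Hence no $(k, 0)$-piece with $k$ even and $k \neq 0$ can arise, proving the proposition.

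The main technical obstacle will be making the Leray decomposition precise: one must track the splitting of $R^q p_* \cc$ into Tate twists of $\vv_{a,b}$ and verify that all error terms arising from the open Torelli comparison and from the diagonal complement in the configuration space are Hodge--Tate. Once this bookkeeping is complete, the parity conclusion in the final step is immediate.
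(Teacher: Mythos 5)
Your proposal follows the same overall strategy as the paper: reduce to the $(k,0)$ part of $W_kH^k(\M_{2,n})$, decompose over $\M_2$ via Leray, transfer to $\A_2$, and invoke Proposition~\ref{prop:pureweightabelian}. The conclusion is correct, but two points deserve comment. First, the paper does not run Leray on the forgetful map $\M_{2,n}\to\M_2$ directly; it first passes through the $n$-fold fiber product $\C^n\to\M_2$ of the universal curve, using the exact sequence $\bigoplus W_{k-2}H^{k-2}(\C^{n-1})\to W_kH^k(\C^n)\to W_kH^k(\M_{2,n})\to 0$ (whose first map is of type $(1,1)$) to identify the relevant $(k,0)$ piece with that of $W_kH^k(\C^n)$. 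Since $\C^n\to\M_2$ is smooth and \emph{proper}, Leray degenerates and $R^qf_*\cc$ is a plain K\"unneth sum of tensor products of $R^{i}\pi_*\cc$; moreover, for $q$ odd these have no invariants under the hyperelliptic involution, so only $H^0$ and $H^2$ of $\M_2$ contribute when $k$ is even. Your direct configuration-space fibration is not proper, so the degeneration and the weight bookkeeping you defer to the end are exactly the nontrivial part; the paper's detour through $\C^n$ is how that bookkeeping is actually discharged. Second, your assertion that the corrections coming from the product locus $\Sym^2\A_1\subset\A_2$ (and from the diagonals in $\C^n$) are Hodge--Tate is not literally true: $H^\bullet(\A_1\times\A_1,\vv_{a,b})$ contains classes built from elliptic cusp forms, which are not Tate. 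What saves the argument, and what the paper actually uses, is that these corrections enter through Gysin-type maps of Hodge type $(1,1)$, i.e.\ with a Tate twist, so they cannot produce classes of type $(k,0)$ in the pure weight-$k$ part; equivalently, restriction to an open subvariety is surjective onto the lowest-weight piece. With those two adjustments your parity argument via $\overline{\s}_{a-b,b+3}$ goes through and is a correct (slightly different) way to finish, in place of the paper's observation that the surviving $H^0$ and $H^2$ contributions are pure Tate.
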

\begin{proof}
     We have the short exact sequence
    \[
    H^{k-2}(\tilde{\partial \M}_{2,n})\rightarrow H^{k}(\Mb_{2,n})\rightarrow W_{k}H^{k}(\M_{2,n})\rightarrow 0,
    \]
    where the first morphism is of Hodge type $(1,1)$. Therefore, $H^{k,0}(\Mb_{2,n})$ is identified with the Hodge type $(k,0)$ part of $W_{k}H^{k}(\M_{2,n})$.

    Let $\pi\colon \C\rightarrow \M_2$ be the universal curve and $f\colon \C^n\rightarrow \M_2$ its $n$-fold fiber product. The open embedding $\M_{2,n}\subset \C^n$ induces an exact sequence
    \[
    \bigoplus W_{k-2} H^{k-2}(\C^{n-1})\rightarrow W_k H^k (\C^{n})\rightarrow W_k H^k(\M_{2,n})\rightarrow 0.
    \]
    The first morphism is of Hodge type $(1,1)$, so the $(k,0)$ part of $W_k H^k(\M_{2,n})$ is identified with the $(k,0)$ part of  $W_k H^k (\C^{n})$.

    Consider the Leray spectral sequence for the morphism $f\colon \C^n\rightarrow \M_2$. Because $f$ is smooth and proper, we have
    \[
    H^k(\C^n)\cong \bigoplus_{p+q=k} H^p(\M_2,R^qf_*\cc).
    \]
    When $q$ is odd, $H^p(\M_2,R^qf_*\cc)=0$, as there are no invariants under the action of the hyperelliptic involution. Moreover, $\M_2$ is affine of dimension $3$, so $H^p(\M_2,R^qf_*\cc)=0$ for $p>3$. Therefore, because $k$ is even, we have
    \[
    H^k(\C^n)\cong H^0(\M_2,R^{k}f_*\cc)\oplus H^2(\M_2,R^{k-2}f_*\cc).
    \]
    
    We have an open embedding $\M_2\subset \A_2$. The local systems $R^{k}f_*\cc$ and $R^{k-2}f_*\cc$ decompose as sums of symplectic local systems of weight $k$ and $k-2$, respectively, which are pulled back from $\A_2$. For any such symplectic local system $\mathbb{V}_{\lambda}$ with $|\lambda| = k$, we have a surjection
    \[
    W_{k} H^{0} (\A_2,\mathbb{V}_{\lambda})\rightarrow W_k H^{0}(\M_2,\mathbb{V}_{\lambda}),
    \]
    and so $W_k H^{0}(\M_2,\mathbb{V}_{\lambda}) = 0$ by Proposition \ref{prop:pureweightabelian}. %
    Similarly, if $|\lambda|=k-2$, we have a surjection  
    \[
    W_{k-2} H^{2} (\A_2,\mathbb{V}_{\lambda})\rightarrow W_{k-2} H^{2}(\M_2,\mathbb{V}_{\lambda}).
    \]
    After tensoring with $\cc$, we see that $W_{k-2} H^{2}(\M_2,\mathbb{V}_{\lambda})$ is pure Tate, by Proposition \ref{prop:pureweightabelian}, and so the $(k,0)$ part of $W_k H^k (\C^{n})$ vanishes.
\end{proof}
We now consider the odd degree case.
\begin{prop}\label{prop:genus2forms}
Let $k$ be odd and $n\geq k-3$. Then $H^{k,0}(\Mb_{2,n})\cong \bigoplus_{A}H^{k,0}(\Mb_{2,A})$, where the summation runs over subsets $A\subset \{1,\dots,n\}$ such that $|A|=k-3$ and the isomorphism is given by the pullback maps forgetting the markings not in $A$. When $n=k-3$, the $\ss_n$-representation $H^{k,0}(\Mb_{2,n})$ is identified with the $(k,0)$ part of $\bigoplus_{|\lambda|=k-3} W_{k} H^{3}(\A_2, \mathbb{V}_{\lambda})\otimes V_{\lambda^T}$.
\end{prop}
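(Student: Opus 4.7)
The plan is to adapt the Leray spectral sequence analysis from the proof of Proposition~\ref{prop:genus2evenforms}, tracking carefully which summands can carry Hodge type $(k,0)$. Using the boundary sequence and the open embedding $\M_{2,n} \subset \C^n$ as in that proof, I would first identify $H^{k,0}(\Mb_{2,n})$ with the $(k,0)$ piece of $W_k H^k(\C^n)$. The smooth proper Leray spectral sequence for $f \colon \C^n \to \M_2$ then gives
\[
W_k H^k(\C^n) \cong W_k \bigoplus_{p+q=k} H^p(\M_2, R^q f_* \cc).
\]

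Next I would apply K\"unneth to write each $R^q f_* \cc$ as a sum of tensor products of the local systems $R^{q_i} \pi_* \cc \in \{\cc,\, \mathbb{V},\, \cc(-1)\}$ over the $n$ factors. A factor $R^2 \pi_* \cc = \cc(-1)$ contributes a Tate twist of type $(1,1)$, shifting a prospective $(k,0)$ class to the impossible Hodge type $(k-b,-b)$ for $b > 0$; hence the $(k,0)$ piece only sees summands of the form $\mathbb{V}^{\otimes S}$ indexed by subsets $S \subset \{1,\dots,n\}$ of size $q$. Combined with a Gysin argument along the codimension-one boundary $\A_{1,1} = \A_2 \setminus \M_2$ (whose Gysin pushforward shifts Hodge types by $(1,1)$, and hence cannot contribute to or connect with $(k,0)$ classes for $k > 0$), the $(k,0)$ pieces of $W_{p+q} H^p(\A_2, \mathbb{V}_\lambda)$ and $W_{p+q} H^p(\M_2, \mathbb{V}_\lambda)$ agree. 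Proposition~\ref{prop:pureweightabelian} then forces $p = 3$ and $q = k-3$. The $(k,0)$ contribution to $W_k H^k(\C^n)$ therefore splits as a direct sum over subsets $A \subset \{1,\dots,n\}$ of size $k-3$, with $\mathbb{V}^{\otimes A}$ on the chosen factors and the constant sheaf on the remainder. Naturality of the Leray spectral sequence under the forgetful maps identifies each summand with the pullback of $H^{k,0}(\Mb_{2,A})$, establishing the first assertion.

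For $n = k-3$ there is a unique summand, and by the Gysin argument above the $(k,0)$ piece of $W_k H^3(\M_2, \mathbb{V}^{\otimes (k-3)})$ equals that of $W_k H^3(\A_2, \mathbb{V}^{\otimes (k-3)})$. Applying Schur--Weyl duality for the $\Sp_4$-action on $\mathbb{V}^{\otimes (k-3)}$ yields a decomposition under the commuting $\ss_{k-3}$-action; symplectic contractions of factor pairs introduce Tate twists and thus contribute only to Hodge types $(p,q)$ with $q > 0$. Consequently, only the non-contracted part $\bigoplus_{|\lambda|=k-3} V_{\lambda^T} \boxtimes \mathbb{V}_\lambda$ contributes to the $(k,0)$ piece, producing the claimed formula.

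The main obstacle will be the Gysin step at $\A_{1,1}$: I would need to verify that the long exact sequence for the pair $(\A_2, \A_{1,1})$ with coefficients in $\mathbb{V}_\lambda$ is strictly compatible with mixed Hodge structures on the relevant weight-graded pieces, so that the Hodge-type shift by $(1,1)$ genuinely kills both ends of the sequence on $(k,0)$ parts. A secondary subtlety is the appearance of $V_{\lambda^T}$ rather than $V_\lambda$ as the multiplicity space, which tracks a sign twist in the $\ss_{k-3}$-action arising from the antisymmetrization implicit in passing from products of curves to holomorphic top forms.
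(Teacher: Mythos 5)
Your proposal is correct and follows essentially the same route as the paper: reduction to the $(k,0)$ part of $W_kH^k(\C^n)$, the Leray spectral sequence plus K\"unneth over $\M_2$, elimination of the summands containing an $R^2\pi_*\cc$ factor or having odd tensor weight, comparison of $\M_2$ with $\A_2$ so that Proposition \ref{prop:pureweightabelian} applies, and the $\Sp_4$--$\ss_{k-3}$ duality producing the $V_{\lambda^T}$ multiplicity spaces. The only (immaterial) differences are that the paper disposes of the $R^2\pi_*\cc$ summands via the $\psi$-class projectors of \cite{PTY} rather than your direct Tate-twist observation, and compares $\M_2$ with $\A_2$ via surjectivity of restriction on the lowest-weight piece together with an exact sequence involving $\A_1\times\A_1$, which carries the same content as your Gysin sequence along $\A_2\setminus\M_2$.
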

\begin{proof}

 As in the proof of Proposition \ref{prop:genus2evenforms}, we reduce to the study of the $(k,0)$ part of $W_kH^k(\C^n)$. We have
    \[
    H^k(\C^n)\cong H^1(\M_2,R^{k-1}f_*\cc)\oplus H^3(\M_2,R^{k-3}f_*\cc).
    \]
     The local system $R^{k-1}f_*\cc$  decomposes as sums of symplectic local systems of weight $k-1$, which are pulled back from $\A_2$. For any such symplectic local system $\mathbb{V}_{\lambda}$, we have a surjection
    \[
    W_{k} H^{1} (\A_2,\mathbb{V}_{\lambda})\rightarrow W_k H^{1}(\M_2,\mathbb{V}_{\lambda}).
    \]
    By Proposition \ref{prop:pureweightabelian}, $H^{1} (\A_2,\mathbb{V}_\lambda)=0$, where again in the Betti setting we apply a comparison theorem to obtain the vanishing. %
    Hence, we have
    \[
    W_k H^k(\C^n) \cong W_k H^3(\M_2,R^{k-3}f_*\cc).
    \]
    By the K\"unneth formula,
    \[
    W_k H^3(\M_2,R^{k-3}f_*\cc) \cong \bigoplus_{i_1+\dots+i_n=k-3} W_k H^3(\M_2,R^{i_1}\pi_*\cc \otimes \dots \otimes R^{i_n}\pi_*\cc).
    \]

    Given a subset $A \subset \{1, \ldots, n\}$, let $\C^n \to \C^A$ be the projection onto the factors indexed by $A$. We consider the following subspaces of $H^k(\C^n)$:

 \begin{itemize}
     \item $\tilde{\Phi}$, the span of the pullbacks of $H^k(\C^{\{i\}^c})$ along projection $\C^n \to \C^{\{i\}^c}$
     \item $\tilde{\Psi}$, the span of $\psi_i \cdot H^{k-2}(\C^{\{i\}^c})$
\end{itemize}

    As observed in \cite[Section 5.2.2]{PTY}, the subspace $\widetilde{\Phi}$ corresponds to the span of the summands where some $i_s = 0$. 
Meanwhile, modulo $\tilde{\Phi}$, the subspace $\tilde{\Psi}$ corresponds to the span of summands where some $i_s = 2$. This follows from the formulas for the projector $\pi_2$ (which projects onto such summands) in \cite[Section 5.1]{PTY}. Therefore, any summand with some index $i_s=2$ cannot contribute to the $(k,0)$ part of the Hodge structure, as $\psi$ classes are of type $(1,1)$. From now on, we only consider the case when each index $i_s$ is $0$ or $1$. 

For $n>k-3$, consider a summand $W_k H^3(\M_2,R^{i_1}\pi_*\cc\otimes \dots \otimes R^{i_n}\pi_*\cc)$. Let $A\subset \{1,\dots,n\}$ be the set of indices such that $i_s=1$
if and only if $s\in A$. Then $|A|=k-3$, and the summand $H^3(\M_2,R^{i_1}\pi_*\cc\otimes \dots \otimes R^{i_n}\pi_*\cc)$ is pulled back from $H^k(\C^{A})$ along the projection $\C^n\rightarrow \C^{A}$.    

When $n=k-3$, the only possibility is that each index $i_s$ is $1$. We thus identify $H^{k,0}(\Mb_{2,n})$ with the $(k,0)$ part of $W_k H^3(\M_2, (R^1\pi_* \cc)^{\otimes k-3})$.  As in the proof of \cite[Lemma 3.1(a)]{CLP-STE}, we have an isomorphism 
\[
W_{k} H^3(\M_2, (R^1\pi_* \cc)^{\otimes k-3}) \cong \bigoplus_{|\lambda|=k-3} W_{k} H^{3}(\M_2, \mathbb{V}_{\lambda})\otimes V_{\lambda^T}.
\] 
We have an exact sequence
\[
W_{k-2} H^{1}(\A_1\times \A_1,\mathbb{V}_{\lambda})\rightarrow W_k H^3(\A_2,\mathbb{V}_{\lambda})\rightarrow W_k H^3(\M_2,\mathbb{V}_{\lambda}),
\]
where the first morphism is of type $(1,1)$. Hence, the $(k,0)$ part of $W_k H^3(\M_2,\mathbb{V}_{\lambda})$ is identified with the $(k,0)$ part of $W_k H^3(\A_2,\mathbb{V}_{\lambda})$.
\end{proof}

By \cite{CanningLarsonPayne, CLP-STE}, $H^{k,0}(\Mb_{2,n}) = 0$ for $1\leq k\leq 15$. We compute $H^{k,0}(\Mb_{2,n})$ for $k=17$ and $19$.
\begin{prop}\label{prop:genus217and19}\leavevmode There are isomorphisms of $\ss_n$-representations
\[
H^{17,0}(\Mb_{2,n})\cong  %
\mathrm{Ind}_{\ss_{14}\times \ss_{n-14}}^{\ss_{n}} (V_{2^{7}}\boxtimes \mathbf{1}) \quad \mbox{ and } \quad %
    H^{19,0}(\Mb_{2,n}) %
    \cong \mathrm{Ind}_{\ss_{16}\times \ss_{n-16}}^{\ss_{n}} (V_{2^5,1^6}\boxtimes \mathbf{1}).%
    \]
\end{prop}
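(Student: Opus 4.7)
The plan is to apply Proposition~\ref{prop:genus2forms} to reduce to the case $n = k - 3$: the first part of that proposition directly produces the induced-representation formula claimed here for $n \geq k - 3$ (and both sides vanish for $n < k - 3$ by the prior vanishing results recalled above), so it suffices to compute $H^{k,0}(\Mb_{2, k-3})$ as an $\ss_{k-3}$-representation, i.e., to identify it as $V_{2^7}$ for $k = 17$ and as $V_{2^5, 1^6}$ for $k = 19$.

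For this, combining the second part of Proposition~\ref{prop:genus2forms} with Proposition~\ref{prop:pureweightabelian}(2) gives
\[
H^{k, 0}(\Mb_{2, k-3}) \cong \bigoplus_{\substack{\lambda = (a, b) \vdash k-3 \\ a + b \text{ even}}} \bigl(\overline{\s}_{a-b,\, b+3}\bigr)^{(k,0)} \otimes V_{\lambda^T}.
\]
Setting $(j, k') = (a - b, b + 3)$, I would argue that the $(k, 0)$-Hodge piece of $\overline{\s}_{j, k'}$ has dimension equal to $\dim S_{j, k'}(\Sp_4(\zz))$: in the generic case this is immediate from the list of Hodge weights given before Proposition~\ref{prop:pureweightabelian}; in the Saito--Kurokawa case $j = 0$ with $k'$ even, the two Tate summands removed in the definition of $\overline{\s}_{0, k'}$ have Hodge bidegrees $(k' - 2, k' - 2)$ and $(k' - 1, k' - 1)$, so they do not interfere with the extremal $(k, 0)$ piece (which still receives a contribution of dimension equal to the total eigenform count, via the underlying elliptic Galois representation of each lift).

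The proof thus reduces to enumerating the nonvanishing dimensions of $S_{j, k'}(\Sp_4(\zz))$ with $j + 2k' = k + 3$. Using standard tabulations (see \cite{BergstromFaber} and references therein), for $k = 17$ (so $j + 2k' = 20$) the only nonzero case is $S_{0, 10}(\Sp_4(\zz))$, of dimension $1$ and spanned by the Saito--Kurokawa lift of the unique weight-$18$ cusp form for $\SL_2(\zz)$; for $k = 19$ (so $j + 2k' = 22$) the only nonzero case is $S_{6, 8}(\Sp_4(\zz))$, of dimension $1$ and spanned by the non-lift Siegel eigenform associated with Harder's conjecture. The corresponding partitions $\lambda = (a, b)$ are $(7, 7)$ and $(11, 5)$, whose transposes are $2^7$ and $(2^5, 1^6)$, giving exactly the irreducible $\ss_{k-3}$-representations claimed.

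The main obstacle is the case-by-case verification that all other $S_{j, k'}(\Sp_4(\zz))$ with $j + 2k' \in \{20, 22\}$ vanish; this is a finite external check, relying on standard tables of dimensions of spaces of Siegel modular forms rather than on any new computation.
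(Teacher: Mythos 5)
Your proposal is correct and follows essentially the same route as the paper: reduce to $n=k-3$ via Proposition~\ref{prop:genus2forms}, identify $W_kH^3(\A_2,\vv_{a,b})$ with $\overline{\s}_{a-b,b+3}$ via Proposition~\ref{prop:pureweightabelian}, and then consult dimension tables for $S_{j,k'}(\Sp_4(\zz))$ with $j+2k'=k+3$ to find that only $\lambda=(7,7)$ (giving $\overline{\s}_{0,10}$) and $\lambda=(11,5)$ (giving $\s_{6,8}$) contribute. Your additional remark that the Saito--Kurokawa Tate summands sit in bidegrees $(k'-2,k'-2)$ and $(k'-1,k'-1)$ and hence cannot affect the extremal $(k,0)$ piece is a correct elaboration of a point the paper leaves implicit.
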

\begin{proof}
    It suffices to compute the $(k,0)$ part of $\bigoplus_{|\lambda|=k-3} W_{k} H^{3}(\A_2, \mathbb{V}_{\lambda})\otimes V_{\lambda^T}$ for $k=17,19$, by Proposition \ref{prop:genus2forms}. These spaces can be explicitly computed in terms of Siegel modular forms as in Proposition \ref{prop:pureweightabelian}. We use dimension formulas for these spaces of modular forms, recorded at \cite{modular}. When $k = 17$, the only partition $\lambda$ that contributes a nonzero space of forms is $\lambda = (7,7)$, in which case we have $W_{17} H^3(\A_2,\vv_{7,7}) = \overline{\s}_{0,10}$. The $(17,0)$ part of $\overline{\s}_{0,10}$ is one-dimensional with trivial $\ss_{17}$-action. When $k=19$, the only partition $\lambda$ that contributes %
    is $\lambda = (11,5)$. Its contribution is $W_{19} H^3(\A_2,{\vv_{11,5}}) = \s_{6,8}$. The $(19,0)$ part of $\s_{6,8}$ is one-dimensional with trivial $\ss_{19}$-action
\end{proof}
\begin{rem}
    In principle, the calculation in Proposition \ref{prop:genus217and19} can be done in arbitrary cohomological degree $k$. As $k$ grows, more spaces of forms contribute. For example, when $k=21$, there is a nonzero contribution from $\overline{\s}_{0,12}, \s_{4,10}, \s_{8,8}$, and $\s_{12,6}$.
\end{rem}
\begin{cor} \label{cor:FA genus 2}
There are isomorphisms of $\FA$-modules
\[H^{17,0}(\Mb_{2,*}) \cong C_{2^{7}} \qquad \text{and} \qquad H^{19,0}(\Mb_{2,*}) \cong C_{2^5,1^6}.\]
\end{cor}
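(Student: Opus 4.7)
The plan is to imitate the genus $1$ argument of Corollary \ref{cor:FA genus 1}, using Proposition \ref{prop:genus217and19} as the key input. First, I interpret the arity-$14$ and arity-$16$ cases of that proposition as $\ss_{14}$- and $\ss_{16}$-equivariant identifications $H^{17,0}(\Mb_{2,14}) \cong V_{2^7}$ and $H^{19,0}(\Mb_{2,16}) \cong V_{2^5,1^6}$. Viewing $V_{2^7}$ and $V_{2^5,1^6}$ as $\FS$-modules concentrated in a single arity (with all non-bijective morphisms acting by zero), these identifications produce morphisms of $\FS$-modules into $H^{17,0}(\Mb_{2,*})$ and $H^{19,0}(\Mb_{2,*})$. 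By the $(\Ind_{\FS}^{\FA},\mathrm{forget})$-adjunction, they extend to morphisms of $\FA$-modules
\[
C_{2^7} \to H^{17,0}(\Mb_{2,*}) \qquad \text{and} \qquad C_{2^5,1^6} \to H^{19,0}(\Mb_{2,*}).
\]

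Next, I would upgrade each map to an isomorphism. Because the partitions $2^7$ and $2^5,1^6$ both have first part $\lambda_1 = 2$, the $\FA$-modules $C_{2^7}$ and $C_{2^5,1^6}$ are already simple by the classification in Section \ref{sec:FAsimple}; in particular, there is no need to pass to a $\tilde C$ quotient as in the genus $1$ case. By construction each map is an isomorphism in the smallest arity in which the source is nonzero (arity $14$ and $16$, respectively), so Corollary \ref{cor:FA inj} applies and delivers injectivity in every arity. Finally, Proposition \ref{prop:genus217and19} also describes each target arity-wise as the induced representation $\Ind_{\ss_m \times \ss_{n-m}}^{\ss_n}(V_\lambda \boxtimes \mathbf{1})$, which is exactly $C_\lambda(n)$; hence the dimensions agree in every arity and the injections are in fact isomorphisms.

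The substantive work has already been carried out in Proposition \ref{prop:genus217and19}, where Petersen's computation of the cohomology of local systems on $\A_2$ and the dimension formulas for the relevant Siegel modular forms pin down both $\ss_n$-representations. Given that input, the present corollary is essentially formal: adjunction produces a canonical candidate map, and simplicity of the source together with agreement in the lowest arity forces it to be an isomorphism. The only point requiring a moment's care is that the arity-$m$ map produced by Proposition \ref{prop:genus217and19} really is $\ss_m$-equivariant, so that it defines a morphism of $\FS$-modules; but this is immediate from the equivariance built into that proposition.
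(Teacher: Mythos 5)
Your proposal is correct and follows essentially the same route as the paper: adjunction from the $\FS$-module concentrated in arity $14$ (resp.\ $16$) gives a nonzero map $C_{2^7}\to H^{17,0}(\Mb_{2,*})$ (resp.\ $C_{2^5,1^6}\to H^{19,0}(\Mb_{2,*})$), which is injective and then an isomorphism by the arity-wise dimension count of Proposition \ref{prop:genus217and19}. The only cosmetic difference is that the paper deduces injectivity directly from simplicity of the source (a nonzero morphism out of a simple object has zero kernel), whereas you route through Corollary \ref{cor:FA inj}; both are valid.
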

\begin{proof}
    We have morphisms of $\FS$-modules 
    \begin{align*}
     \rho_{2^7} &\to  H^{17,0}(\Mb_{2,*})
     &
     \rho_{2^5,1^6} &\to H^{19,0}(\Mb_{2,*}).
    \end{align*}
    By adjunction, these extend to (non-zero) morphisms of $\FA$-modules 
    \begin{align*}
     C_{2^7} &\to  H^{17,0}(\Mb_{2,*})
     &
     C_{2^5,1^6} &\to H^{19,0}(\Mb_{2,*}).
    \end{align*}
    Because $C_{2^7}$ and $C_{2^5,1^6}$ are simple, these morphisms must be injective.
    Hence, they are isomorphisms because both source and target have the same dimension in each arity, by Proposition \ref{prop:genus217and19}.
\end{proof}

\begin{rem}
     A longer proof of Corollaries \ref{cor:FA genus 1} and \ref{cor:FA genus 2} can be obtained by directly analyzing the pullbacks of holomorphic forms to boundary divisors, following \cite[Section 2.2]{CanningLarsonPayne}.
\end{rem}

\subsubsection{Pullback formulas} \label{sec:pull}
The $\FA$-module structure on $H^{k,0}(\Mb_{2,*})$ encodes the data of the pullback morphisms to boundary divisors with genus $0$ tails. Here, we explain how holomorphic forms pull back to the other boundary divisors.
For $B\subset \{1,\dots,n\}$, we write $D_B = \Mb_{1,B\cup p}\times \Mb_{1,B^c\cup q}$, and we define $\iota_B$ to be the map gluing $p$ to $q$. 
\begin{lem}\label{lem:genus2togenus1}
    We have $\iota_B^*H^{k,0}(\Mb_{2,n})=0$.
\end{lem}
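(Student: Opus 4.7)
The plan is to reduce the lemma to the case $n = k-3$ using the $\FA$-module structure from Proposition~\ref{prop:genus2forms}, after which Künneth and the description of $H^{k,0}(\Mb_{1,*})$ (Corollary~\ref{cor:FA genus 1}) will show that the target of $\iota_B^*$ already vanishes. If $k$ is even and positive, then $H^{k,0}(\Mb_{2,n}) = 0$ by Proposition~\ref{prop:genus2evenforms}, and if $n < k-3$ the argument in the proof of Proposition~\ref{prop:genus2forms} gives the same conclusion; both cases make the lemma trivial, so the real work is for $k$ odd and $n \geq k-3$.

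First, I would use Proposition~\ref{prop:genus2forms} to write any $\alpha \in H^{k,0}(\Mb_{2,n})$ as $\alpha = \sum_A \sigma_A^* \beta_A$, where $A$ ranges over subsets of $\{1,\dots,n\}$ of size $k-3$, $\sigma_A \colon \Mb_{2,n} \to \Mb_{2,A}$ is the forgetful map, and $\beta_A \in H^{k,0}(\Mb_{2,A})$. For each such $A$, set $B_A = B \cap A$; then I would identify $\sigma_A \circ \iota_B$ with the composition
\[
\Mb_{1, B\cup p} \times \Mb_{1, B^c \cup q} \xrightarrow{g_A} \Mb_{1, B_A \cup p} \times \Mb_{1, (A \setminus B_A) \cup q} \xrightarrow{\iota^A_{B_A}} \Mb_{2, A},
\]
where $g_A$ is the product of the two forgetful maps and $\iota^A_{B_A}$ is the analogous gluing map for $\Mb_{2,A}$. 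This reduces the problem to showing that $\iota^{A,*}_{B'}$ vanishes on $H^{k,0}(\Mb_{2,A})$ for every $A$ with $|A| = k-3$ and every $B' \subset A$, i.e., to the case $n = k-3$.

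For the reduced case, Künneth gives
\[
H^{k,0}\bigl(\Mb_{1, B'\cup p} \times \Mb_{1, (A \setminus B') \cup q}\bigr) = \bigoplus_{i+j=k} H^{i,0}(\Mb_{1, B'\cup p}) \otimes H^{j,0}(\Mb_{1, (A \setminus B') \cup q}).
\]
By Proposition~\ref{prop:genus1kforms} and Corollary~\ref{cor:FA genus 1}, a factor $H^{i,0}(\Mb_{1,m})$ is nonzero only when $i = 0$, or when $i$ is odd with $i \geq 11$, $i \neq 13$, and $m \geq i$. Since $k$ is odd, exactly one of $i,j$ in any contributing summand is odd, so the other must equal $0$; the odd factor then requires $|B'| + 1 \geq k$ or $|A \setminus B'| + 1 \geq k$, contradicting $|B'| + |A \setminus B'| = k - 3$. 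Hence the Künneth target already vanishes, and the pullback is zero.

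The step I expect to require the most care is the factorization $\sigma_A \circ \iota_B = \iota^A_{B_A} \circ g_A$. This is the standard compatibility in the modular operad $\Mb$ between gluing at a node and forgetting markings: the componentwise stabilization triggered by forgetting markings does not interact with the identification $p \sim q$, so the two sides agree as maps of moduli stacks. Once this compatibility is in place, the rest of the argument is purely combinatorial.
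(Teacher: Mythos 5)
Your proposal is correct and follows essentially the same route as the paper: the commutative square $f_A \circ \iota_B = \iota^A_{A\cap B}\circ(\text{forget})$ together with the generation statement of Proposition~\ref{prop:genus2forms} reduces everything to the vanishing of $H^{k,0}$ on $\Mb_{1,(A\cap B)\cup p}\times\Mb_{1,(A\cap B^c)\cup q}$, which follows from K\"unneth and Proposition~\ref{prop:genus1kforms} because both marking sets have size at most $k-2$. Your explicit treatment of the degenerate cases ($k$ even, $n<k-3$) and of the parity bookkeeping in the K\"unneth decomposition is consistent with, and slightly more detailed than, the paper's argument.
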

\begin{proof}
    For $A\subset \{1,\dots,n\}$ with $|A|=k-3$, we have the commutative diagram
    \begin{equation} \label{dgenus2}
\begin{tikzcd}
D_B \arrow{r}{\iota_B} \arrow{d} & \Mb_{2,n} \arrow{d}{f_A} \\
\Mb_{1,(A \cap B) \cup p} \times \Mb_{1,(A \cap B^c) \cup q} \arrow{r} & \Mb_{2,A},
\end{tikzcd}
\end{equation}
where the vertical maps forget the markings not in $A$ and the horizontal maps glue $p$ to $q$. Both $|A\cap B|$ and $|A \cap B^c|$ are at most $k-3$, so $H^{k,0}(\Mb_{1,(A \cap B) \cup p} \times \Mb_{1,(A \cap B^c) \cup q})=0$ by the K\"unneth formula and Proposition \ref{prop:genus1kforms}.
\end{proof}
Next we consider the pull back under the self-gluing map. Let $P$ be a set of size $n$. We have $\xi\colon \Mb_{1,P\cup \{p,q\}}\rightarrow\Mb_{2,P}$, gluing the markings $p$ and $q$. 
\begin{lem}\label{lem:selfglue17}
    We have $\xi^*H^{k,0}(\Mb_{2,n})=0$.
\end{lem}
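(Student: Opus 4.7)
The plan is to mirror the strategy of Lemma \ref{lem:genus2togenus1}, reducing to a vanishing of holomorphic forms on $\Mb_{1,m}$ with too few markings $m$. Since $H^{k,0}(\Mb_{2,n}) = 0$ whenever $k$ is a nonzero even integer (Proposition \ref{prop:genus2evenforms}) and also for $1 \le k \le 15$ by \cite{CanningLarsonPayne, CLP-STE}, there is nothing to prove outside the range where $k$ is odd and $k \geq 17$, which I henceforth assume. In this range $k-3 \geq 14$, so the decomposition from Proposition \ref{prop:genus2forms} is available.

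By Proposition \ref{prop:genus2forms}, every class in $H^{k,0}(\Mb_{2,P})$ can be written as $\sum_A \pi_A^* \omega_A$, where $A$ runs over size $k-3$ subsets of $P$, $\pi_A\colon \Mb_{2,P} \to \Mb_{2,A}$ is the forgetful map, and $\omega_A \in H^{k,0}(\Mb_{2,A})$. Forgetting markings in $P \setminus A$ commutes with the self-gluing $\xi$, since the markings being forgotten are disjoint from $\{p,q\}$. This yields a commutative diagram
\[
\begin{tikzcd}
\Mb_{1, P \cup \{p,q\}} \arrow{r}{\xi} \arrow{d}[swap]{\rho_A} & \Mb_{2,P} \arrow{d}{\pi_A} \\
\Mb_{1, A \cup \{p,q\}} \arrow{r}[swap]{\xi_A} & \Mb_{2,A},
\end{tikzcd}
\]
where $\rho_A$ forgets the markings in $P \setminus A$ and $\xi_A$ glues $p$ to $q$. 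Consequently $\xi^* \pi_A^* \omega_A = \rho_A^* \xi_A^* \omega_A$, so it suffices to prove $\xi_A^* \omega_A = 0$ for each $A$.

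The target $H^{k,0}(\Mb_{1, A \cup \{p,q\}})$ has a marking set of size $k-1 < k$, and hence vanishes by Proposition \ref{prop:genus1kforms} (equivalently, by the $\FA$-module description in Corollary \ref{cor:FA genus 1}, since the simple $\FA$-module $\tilde C_{1^k}$ takes the value zero on sets of cardinality less than $k$). Therefore $\xi_A^* \omega_A = 0$ for every $A$, completing the argument.

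The only step requiring any thought is the commutativity of the square displayed above, which follows from the standard interchangeability of forgetful maps and self-gluings along disjoint marking sets; no further obstacle is anticipated.
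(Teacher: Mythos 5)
Your proof is correct and follows essentially the same route as the paper's: both reduce, via Proposition \ref{prop:genus2forms}, to classes pulled back from $\Mb_{2,A}$ with $|A|=k-3$, use the commutativity of self-gluing with forgetting the markings in $P\smallsetminus A$, and conclude from the vanishing $H^{k,0}(\Mb_{1,k-1})=0$ of Proposition \ref{prop:genus1kforms}. You simply spell out the preliminary reductions (even and small $k$) that the paper leaves implicit.
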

\begin{proof}
We have a commutative diagram
\begin{equation}
\begin{tikzcd}
\Mb_{1,P\cup \{p,q\}} \arrow{r}{\xi} \arrow{d}[swap]{} & \Mb_{2,P} \arrow{d}{f_A} \\
\Mb_{1,A \cup \{p,q\}} \arrow{r}[swap]{} & \Mb_{2,A},
\end{tikzcd}
\end{equation}
where the vertical maps forget all points in $P\smallsetminus A$, and $A$ is a set of size $k-3$. We have that $A\cup \{p,q\}$ is of size $k-1$, but $H^{k,0}(\Mb_{1,k-1})=0$, by Proposition \ref{prop:genus1kforms}.
\end{proof}

\begin{rem} \label{rem:motivic}
We have stated our results in this section, and throughout this paper, in terms of holomorphic forms. However, our approach is informed by a motivic perspective, meaning that each cohomology group $H^k(\Mb_{g,n})$ should be thought of not only as a complex vector space with a Hodge decomposition, but rather as a motivic structure, i.e. a rational vector space $V$ (the rational singular cohomology), together with a (pure or mixed) Hodge structure on $V \otimes \cc$ and a continuous action of the absolute Galois group on $V \otimes \qq_p$ for each prime $p$, with suitable compatibilities and comparison isomorphisms, as in \cite{Deligne89}.  (In many circumstances, one may also wish to include other realizations, such as de Rham and crystalline cohomology, but we leave this aside.)

The spaces of holomorphic forms that we consider all come from natural motivic structures such as  $\s_{k+1} := W_k H^k (\M_{1,k})$ and $\overline \s_{a-b,b+3} := W_{a + b + 3} H^3(\A_2, \vv_{a,b})$. Here, we consider $\vv_{a,b}$ as an algebraic local system with rational coefficients, so its cohomology carries a (mixed) motivic structure.  For $k = 17$, the image of $H^{17}_c(\M_{1,17}) \to H^{17}(\Mb_{1,17})$ is an $\ss_{17}$-equivariant motivic structure isomorphic to $\s_{18} \otimes V_{1^{17}}$. By pulling back under tautological morphisms and working operadically, as in Remark~\ref{rem:operadic}, we obtain a functor from $\FA$ to motivic structures isomorphic to $\s_{18} \otimes \widetilde{C}_{1^m}$. Here, we consider $\widetilde C_{1^m}$ as a simple $\FA$-module over $\qq$.   Thus we have described not only the space of holomorphic forms $H^{17,0}$ but also a rational structure on $H^{17,0}(\Mb_{1,n}) \oplus H^{0,17}(\Mb_{1,n})$ and an associated $p$-adic Galois representation for each prime~$p$.

Likewise, there is a natural subquotient of  $H^{17}(\Mb_{2,14})$ that is isomorphic to $\overline \s_{0,10}$. This generates a subquotient of $H^{17}(\Mb_{2,*})$ in the category of functors from $\FA$ to motivic structures that is isomorphic to $C_{2^{7}} \otimes \overline \s_{0,10}$ and accounts for all holomorphic $17$-forms on $\Mb_{2,n}$, for all $n$. One word of caution is warranted: the $p$-adic Galois representations attached to $\s_{18}$ and $\overline \s_{0,10}$ are isomorphic, by the theory of Saito--Kurokawa lifts, cf. \cite[Section~2]{Petersenlocalsystems}. It is expected that they are isomorphic as motivic structures, but this is not known. In particular, it is not known whether the associated rational Hodge structures are isomorphic.
\end{rem}

\section{Inductive arguments with modular cooperads} \label{sec:AC}

Here we briefly recall the formalism of modular cooperads and Getzler--Kapranov graph complexes. We then present a lemma adapting the inductive arguments for computing cohomology groups of moduli spaces of curves from \cite{ArbarelloCornalba} into this formalism.

\subsection{Modular cooperads and the Feynmann transform}
A stable $\ss$-module is a collection of dg vector spaces $\P = \{\P(g,n)\}$ for each $(g,n)$ such that $g,n\geq 0$ and $2g+n\geq 3$, together with an $\ss_n$-action on each $\P(g,n)$. For a stable $\ss$-module $\P$ and stable graph $\Gamma$, we define the tensor product
\[
\bigotimes_{\Gamma} \P := \bigotimes_{v\in V(\Gamma)} \P(g_v,n_v).
\]
The group $\Aut(\Gamma)$ acts on $\bigotimes_{\Gamma} \P$. A modular cooperad is a stable $\ss$-module $\P$ together with morphisms
\[
\P(g,n)\rightarrow \bigotimes_{\Gamma} \P
\]
for every stable graph $\Gamma$ of genus $g$ with $n$ legs. 
Alternatively, a modular cooperad may be defined as a stable $\ss$-module $\P$ together with morphisms 
\begin{equation}\label{equ:mod coop def}
    \begin{aligned}
    \eta^* \colon \P(g,n) &\to \P(h,m+1) \otimes \P(g-h,n-m+1), 
    \\
    \xi^*\colon \P(g,n) &\to \P(g-1,n+2).
\end{aligned}
\end{equation}
These morphisms must satisfy a list of natural compatibility relations; see \cite[Section 5.3]{MSS}.

If the modular cooperad $\P^\bullet$ has an additional grading, we write
\[
\bigotimes_{\Gamma} \P^k := \bigoplus_{\sum k_v = k}\bigg( \bigotimes_{v\in V(\Gamma)} \P^{k_v}(g_v,n_v)\bigg).
\]

The Feynman transform of a modular cooperad $\P$ is a $\mathfrak K$-modular operad denoted $\F\P$. Note that the structure maps of a $\mathfrak K$-modular operad dual to \eqref{equ:mod coop def} have cohomological degree +1 instead of 0; see \cite{GetzlerKapranov}. The underlying $\ss$-module of $\F\P$ is
\[
\F\P(g,n) = \bigoplus_{[\Gamma]}\bigg[ \bigotimes_{\Gamma} \P\otimes \mathbb{F}[-1]^{\otimes |E(\Gamma)|} \bigg]_{\Aut (\Gamma)}.
\]
Here, the sum is over isomorphism classes of stable graphs $\Gamma$ of genus $g$ with $n$ legs, and $\mathbb{F}$ is the field over which each $\P(g,n)$ is a vector space. The differential is defined using the cooperadic structure maps $\eta^*$, $\xi^*$ of $\P$. If $\P^\bullet$ has an additional grading, then $\F\P^\bullet$ inherits an additional grading from $\P^\bullet$. 
In the case of most interest to us, $\P(g,n)=H^\bullet(\Mb_{g,n})$, this additional grading is the cohomological or weight grading. A decorated graph generator of $\F\P$ of total weight $k$ with $e$ edges has cohomological degree $k + e$, as each edge contributes $+1$ to the cohomological degree. %

\subsection{The Getzler--Kapranov complexes}
The cohomology groups $H^\bullet(\Mb_{g,n})$ give rise to a modular cooperad $H(\Mb)$. In this case, the structure maps \eqref{equ:mod coop def} are defined to be the pullback morphisms associated to the boundary gluing morphisms
\begin{align*}
    \eta \colon\Mb_{h,m+1}\times \Mb_{g-h,n-m+1} &\to \Mb_{g,n} 
    &
    \xi \colon\Mb_{g-1,n+2} \to \Mb_{g,n}. 
\end{align*}

The cooperad $H(\Mb)$ has the extra data of a grading by cohomological degree. This grading induces a grading on the Feynmann transform $\F H(\Mb)$. We define the weight $k$ Getzler--Kapranov graph complex to be 
\[
\mathsf{GK}_{g,n}^k:= \gr_k \F H(\Mb)(g,n).
\]
Because $\mathsf{GK}_{g,n}^k$ is canonically identified with the $k$th row of the weight spectral sequence for the compactification $\M_{g,n}\subset \Mb_{g,n}$, we have
\[
H^\bullet(\mathsf{GK}_{g,n}^k) = \gr_k H^{\bullet}_c(\M_{g,n}).
\]

We also consider the modular cooperad $H^{\bullet,0}(\Mb)$ of holomorphic forms on the moduli space of stable curves, which is graded by the degree of the holomorphic forms. The results of Section \ref{sec:gleq2} describe the $g\leq 2$ part of the cooperad $H^{\bullet,0}(\Mb)$. When $g\leq 2$, the underlying $\ss$-module is described by Propositions \ref{prop:genus1kforms}, \ref{prop:genus2evenforms}, and \ref{prop:genus2forms}. The structure maps are described by Corollaries \ref{cor:FA genus 1} and \ref{cor:FA genus 2}, and Lemmas \ref{lem:genus2togenus1} and \ref{lem:selfglue17}.

The grading on $H^{\bullet,0}(\Mb)$ induces a grading on the Feynmann transform $\F H^{\bullet,0}(\Mb)$.  The Hodge weight $(k,0)$ Getzler--Kapranov complex is
\[\mathsf{GK}_{g,n}^{k,0}:=\gr_k \F H^{\bullet,0}(\Mb)(g,n).\] The generators of $\mathsf{GK}_{g,n}^{k,0}$ are dual graphs of stable curves of genus $g$ with $n$ numbered legs, each of whose vertices is decorated by a copy of $H^{k_v,0}(\Mb_{g_v,n_v})$, such that $\sum_{v\in \Gamma} k_v = k$.
More explicitly, $\mathsf{GK}_{g,n}^{k,0}$ is the complex

\begin{equation}\label{GKforms}
H^{k,0}(\Mb_{g,n})\xrightarrow{} \bigoplus_{|E(\Gamma)| = 1} H^{k,0}(\Mb_{\Gamma})^{\Aut(\Gamma)} \rightarrow
\bigoplus_{|E(\Gamma)| = 2} (H^{k,0}\big(\Mb_{\Gamma}) \otimes \det E(\Gamma)\big)^{\Aut(\Gamma)}\rightarrow \dots
\end{equation}

Note that $\eqref{GKforms}$ is the subcomplex of $\mathsf{GK}^{k}_{g,n}\otimes \cc$ of holomorphic $k$-forms, and thus we have an identification
\[
H_c^{\bullet}(\M_{g,n})^{k,0}:=F^k \gr_{k} H^{\bullet}_c(\M_{g,n},\cc) \cong H^\bullet(\mathsf{GK}^{k,0}_{g,n}),
\]
where $F$ is the Hodge filtration on the pure weight $k$ Hodge structure $\gr_{k} H^{\bullet}_c(\M_{g,n})$. %

\subsection{An Arbarello--Cornalba induction for modular cooperads}
The following lemma adapts the inductive arguments of Arbarello and Cornalba \cite{ArbarelloCornalba} into the formalism of modular cooperads. It gives a mechanism for checking whether an inclusion of graded modular cooperads %
is an isomorphism in a fixed degree $k$, by induction on $g$ and $n$, using the first two cohomology groups of the Feynman transform.

We recall that if $\P^\bullet$ is a graded modular cooperad, and $\T^\bullet\subset \P^\bullet$ is a graded modular subcooperad, then $\F\T^\bullet\subset \F\P^\bullet$ is a graded $\mathfrak K$-modular suboperad. %

\begin{lem}%
\label{lem:AC}
Let $\T^\bullet\subset \P^\bullet$ be an inclusion of graded modular cooperads. Suppose
\begin{enumerate}
    \item the inclusion induces isomorphisms $\bigotimes_{
\Gamma} \T^{k} \cong \bigotimes_{\Gamma} \P^k$ for all stable graphs $\Gamma$ of genus $g$ with $n$ legs and one or two edges, and
\item the induced maps $H^{k+e}(\F \T^k(g,n)) \rightarrow H^{k+e}(\F\P^k(g,n))$ are isomorphisms for $e = 0, 1$.%
\end{enumerate}
Then $\T^k(g,n) = \P^k(g,n)$.
    
\end{lem}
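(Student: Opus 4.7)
The plan is to pass to the quotient complex $C^\bullet := \F\P^k(g,n) / \F\T^k(g,n)$ and run a short long-exact-sequence argument. Since we work in characteristic zero, tensor products and coinvariants are exact, so $\F\T^k(g,n) \hookrightarrow \F\P^k(g,n)$ is a subcomplex and the differential descends to $C^\bullet$.

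The first step is to identify $C^\bullet$ in low cohomological degrees. Setting $Q_\Gamma := \bigotimes_\Gamma \P^k / \bigotimes_\Gamma \T^k$, one has
$$C^{k+e} \;\cong\; \bigoplus_{[\Gamma],\, |E(\Gamma)| = e} \big[ Q_\Gamma \otimes \det E(\Gamma)\big]_{\Aut(\Gamma)},$$
where the sum runs over isomorphism classes of stable graphs of genus $g$ with $n$ legs. For $e = 0$ the only contributing graph is the single vertex decorated by $\P^k(g,n)$, giving $C^k = \P^k(g,n)/\T^k(g,n)$, which is precisely the quotient I want to show vanishes. For $e = 1$ and $e = 2$, hypothesis (1) forces every $Q_\Gamma$ to vanish, so $C^{k+1} = C^{k+2} = 0$. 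Since $\F\P^k(g,n)$ is concentrated in cohomological degrees $\geq k$, also $C^{k-1} = 0$. Both the differential into and out of $C^k$ are therefore zero, so $H^k(C^\bullet) = C^k = \P^k(g,n)/\T^k(g,n)$.

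The final step is to feed this into the long exact sequence in cohomology associated to the short exact sequence of complexes $0 \to \F\T^k(g,n) \to \F\P^k(g,n) \to C^\bullet \to 0$:
$$H^k(\F\T^k(g,n)) \xrightarrow{\sim} H^k(\F\P^k(g,n)) \to H^k(C^\bullet) \to H^{k+1}(\F\T^k(g,n)) \xrightarrow{\sim} H^{k+1}(\F\P^k(g,n)),$$
whose outer maps are isomorphisms by hypothesis (2). Exactness then forces $H^k(C^\bullet) = 0$, hence $\P^k(g,n) = \T^k(g,n)$.

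No individual step is hard; the conceptual point to get right is that the Feynman transform of the weight-$k$ piece has its first two nontrivial cohomological degrees above the bottom indexed exactly by graphs with one and two edges, so trading an a priori infinite amount of data for two cohomology-vanishing conditions plus a finite compatibility on one- and two-edge boundary strata is enough to propagate $\T^k(g,n) = \P^k(g,n)$.
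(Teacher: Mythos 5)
Your proof is correct and is essentially the paper's argument: the paper notes that hypothesis (1) makes $H^k(\F\T^k)\to H^k(\F\P^k)$ injective and $H^{k+1}(\F\T^k)\to H^{k+1}(\F\P^k)$ surjective and then invokes the five lemma, which is the same content as your long exact sequence for the quotient complex $C^\bullet$ together with the observation that $C^{k-1}=C^{k+1}=0$ forces $H^k(C^\bullet)=\P^k(g,n)/\T^k(g,n)$.
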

\begin{proof}
    The inclusion $\F\T^k\subset \F\P^k$, together with assumption (1), induces an injection $$H^{k}(\F\T^k(g,n))\hookrightarrow H^{k}(\F\P^k(g,n))$$ and a surjection
$$H^{k+1}(\F\T^k(g,n))\twoheadrightarrow H^{k+1}(\F\P^k(g,n)).$$ If both are isomorphisms, then $\T^k(g,n) = \P^k(g,n)$, by the five lemma.
\end{proof}

In light of Lemma \ref{lem:AC} and its proof, we see in particular that if condition (1) is satisfied, $H^{k}(\F\P^k(g,n)) = 0$, and $H^{k+1}(\F\T^k(g,n)) = 0$, then $\T^k(g,n) = \P^k(g,n)$. 

Suppose now that $\P \subset H(\Mb)$ is a subcooperad. Then $H^{k}(\F\P^k(g,n)) = 0$ when $g$ and $n$ are sufficiently large compared to $k$; this key observation is due to Arbarello and Cornalba \cite{ArbarelloCornalba}.   
For applications to holomorphic $17$ and $19$ forms in Section \ref{sec:1719}, the main new ingredient is the vanishing of $H^{k+1}(\F\T^k)$ for an appropriate choice of $\T^\bullet$. 
We prove this vanishing as a consequence of more general vanishing results on the cohomology of graph complexes associated to simple $\FA$-modules that we establish in Section~\ref{sec:FA}.

\section{Holomorphic 17-forms and 19-forms} \label{sec:1719}
Here, we finish the proof of Theorem \ref{thm:upto17}, assuming Proposition \ref{prop:lowdegree}. We apply Lemma~\ref{lem:AC}
to a modular cooperad $\T^\bullet$ with an additional grading. The underlying graded stable $\ss$-module is
\[
\T^k(g,n) = \begin{cases}
\cc & \text{ if $k=0$}\\
    K_n^{17} & \text{ if } g=1 \text{ and } k=17\\
    \Ind_{\ss_{14}\times \ss_{n-14}}^{\ss_n}(V_{2^7}\boxtimes \mathbf{1}) & \text{ if } g=2 \text{ and } k=17
    \\
    0  & \text{ otherwise.}
\end{cases}
\]
Note that for $g\leq 2$, we have $\T^{17}(g,n)\cong H^{17,0}(\Mb_{g,n})$, by the results of Section \ref{sec:gleq2}. We use this fact to define the modular cooperad morphisms. For $g\leq 2$, we define the modular cooperad morphisms to be the same as those from $H^{17,0}(\Mb)$, and we set all of the other morphisms to be trivial. Thus, $\T^\bullet$ is a modular sub-cooperad of $H^{\bullet,0}(\Mb)$. %

\begin{proof}[Proof of Theorem \ref{thm:upto17} assuming Proposition \ref{prop:lowdegree}]
By construction,  $\T^{17}(g,n)\cong H^{17,0}(\Mb_{g,n})$ for $g\leq 2$. Moreover, for all $(g,n)$ such that $g \geq 3$ and $2g-2+n\leq 17$ or $(g,n)=(3,14)$, $\Mb_{g,n}$ is rationally connected \cite{BallicoCasnatiFontanari,Logan}, and hence has no holomorphic forms. It follows that $\T^{17}(g,n)\cong H^{17,0}(\Mb_{g,n})$ for these pairs $(g,n)$, as well. 

For $(g,n)$ such that $g\geq 3$, $2g-2+n>17$, and $(g,n)\neq (3,14)$, we argue using Lemma \ref{lem:AC}. For the criterion in Lemma \ref{lem:AC}(1), we note that $H^{17,0}(\Mb_{\Gamma})\cong \bigoplus_{v\in V(\Gamma)} H^{17,0}(\Mb_{g(v),n(v)})$ by the Hodge--K\"unneth decomposition, as $H^{p,0}(\Mb_{g(v),n(v)})=0$ for $1\leq p\leq 8$, \cite{BergstromFaberPayne, CanningLarsonPayne}. Therefore, using the base cases from the previous paragraph, we may assume by induction on $g$ and $n$ that the inclusion $\T^\bullet\subset H^{\bullet,0}(\Mb)$ induces isomorphisms $\bigotimes_{\Gamma} \T^{17} \cong \bigotimes_{\Gamma} H^{17,0}(\Mb)$ for all stable graphs $\Gamma$ of genus $g$ with $n$ legs and one or two edges. 

Because of our assumption $2g - 2 + n > 17$, we may apply \cite[Proposition 2.1]{BergstromFaberPayne} to see that 
\[H^{17}(\mathsf{GK}^{17,0}_{g,n}) = H_c^{17}(\M_{g,n})^{17,0}\subset H_c^{17}(\M_{g,n},\cc) = 0.\]
By Proposition \ref{prop:lowdegree} (cf. Remark \ref{rem:GC relation}), we have $H^{18}(\F\T^{17})=0$. %
Hence, the natural maps from Lemma \ref{lem:AC}(2) are trivially isomorphisms, and $\T^{17}(g,n)\cong H^{17,0}(\Mb_{g,n})$ for all $(g,n)$. %
\end{proof}

The proof of Theorem \ref{thm:19} is similar to that of Theorem \ref{thm:upto17}. %
\begin{proof}[Proof of Theorem \ref{thm:19}, assuming Proposition \ref{prop:lowdegree}] 
Let $\Q^\bullet$ be the modular operad with underlying graded stable $\ss$-module
\[
\Q^k(g,n) = \begin{cases}
\cc & \text{ if } k=0\\
    K_n^{19} & \text{ if } g=1 \text{ and } k=19\\
     \Ind_{\ss_{16}\times \ss_{n-16}}^{\ss_{n}} (V_{2^5,1^6}\boxtimes \mathbf{1})
     &  \text{ if } g=2 \text{ and } k=19 \\
    0  & \text{ otherwise.}
\end{cases}
\]
By definition, for $g\leq 2$, we have $\Q^{19}(g,n)\cong H^{19,0}(\Mb_{g,n})$. For $g\leq 2$, we define the modular cooperad morphisms to be the same as those from $H^{19,0}(\Mb)$, and we set all of the other morphisms to be trivial. Thus, $\Q^\bullet$ is a modular sub-cooperad of $H^{\bullet,0}(\Mb)$.

For all $(g,n)$ such that $g\geq 3$,  $2g-2+n\leq 19$, and $(g,n)\neq (3,15)$, $\Mb_{g,n}$ is rationally connected \cite{BallicoCasnatiFontanari,Logan}, and hence has no holomorphic forms. It follows that $\Q^{19}(g,n)\cong H^{19,0}(\Mb_{g,n})$ for these pairs $(g,n)$. The assertion that $H^{19,0}(\Mb_{3,15}) = 0$ is an assumption in the theorem statement.

For $(g,n)$ such that $g\geq 3$, $2g-2+n>19$ and $(g,n)\neq (3,15)$, we argue using Lemma \ref{lem:AC}. For the criterion in Lemma \ref{lem:AC}(1), we note that $H^{19,0}(\Mb_{\Gamma})\cong \bigoplus_{v\in V(\Gamma)} H^{19,0}(\Mb_{g(v),n(v)})$ by the Hodge--K\"unneth decomposition, as $H^{p,0}(\Mb_{g(v),n(v)})=0$ for $1\leq p\leq 9$, \cite{BergstromFaberPayne, CanningLarsonPayne}. Therefore, using the base cases from the previous paragraph and the assumptions of the theorem, we may assume by induction on $g$ and $n$ that the inclusion $\Q^\bullet\subset H^{\bullet,0}(\Mb)$ induces isomorphisms $\bigotimes_{\Gamma} \Q^{19} \cong \bigotimes_{\Gamma} H^{19,0}(\Mb)$ for all stable graphs $\Gamma$ of genus $g$ with $n$ legs and one or two edges. 

Continuing with our assumption $2g - 2 + n > 19$,
we have that
\[H^{19}(\mathsf{GK}^{19,0}_{g,n}) = H_c^{19}(\M_{g,n})^{19,0}\subset H_c^{19}(\M_{g,n},\cc) = 0,\]
by \cite[Proposition 2.1]{BergstromFaberPayne}. Assuming also that $g \geq 3$ and $(g, n) \neq (3, 15)$, Proposition \ref{prop:lowdegree} (cf. again Remark \ref{rem:GC relation}) implies $H^{20}(\F\Q^{19}(g,n))=0$.
Hence, the maps from Lemma \ref{lem:AC}(2) are trivially isomorphisms, and so $\Q^{19}(g,n)\cong H^{19,0}(\Mb_{g,n})$ for all $(g,n)$. %
\end{proof}

\section{Graph complexes associated to \texorpdfstring{{$\FA$}}{FA}-modules}\label{sec:FA}

\subsection{Definition of the graph complex}
To any dg $\FA$-module $M$ we associate a graph complex $G_M$, which is a symmetric sequence of loop-order-graded dg vector spaces. More precisely, to the $\FA$-module $M$, we first associate a graded modular cooperad $\P_M$ such that 
\[
\P_M^k(g,n)
=
\begin{cases}
    \qq & \text{if $k=0$} \\
    M(n) & \text{if $k=1$ and $g=0$} \\
    0 & \text{otherwise.}
\end{cases}
\]
We call the extra degree $k$ the weight. Note that assigning weight $1$ and genus 0 to $M$ is an arbitrary choice.
The cooperadic cocompositions \eqref{equ:mod coop def} are defined using the $\FA$-module structure on $M$.
More precisely, we have that $\xi^*$ and $\eta^*$ are the identity maps in weight 0, while in weight 1, $\xi^*=0$ and $\eta^*$ is given by the $\FA$-module structure on $M$. 

Then we define graph complexes
\[
\widehat G_M(g,n) = \gr_1\F \P_M(g,n)
\]
as the weight one part of the Feynman transform of $\P_M$.
More explicitly, elements of $\widehat G_M(g, n)$ can be understood as linear combinations of graphs of genus $g$ with $n$ numbered legs, with one special vertex $*$ of genus $0$ decorated by an element of $M(E_*)$, with $E_*$ the set of half-edges incident at $*$. 
\[
\begin{tikzpicture}
    \node[ext] (c) at (0,0) {$*$};
    \node[int,label=90:{$\scriptstyle 2$}] (v1) at (0,.7) {};
    \node[int,label=90:{$\scriptstyle 0$}] (v2) at (-.7,.7) {};
    \node[int,label=90:{$\scriptstyle 1$}] (v3) at (.7,.7) {};
    \node[int,label=-90:{$\scriptstyle 1$}] (v4) at (-1.4,.7) {};
    \node (n1) at (-2.1,.7) {$1$};
    \node (n2) at (1.4,.7) {$2$};
    \node (n3) at (-.7,-.7) {$3$};
    \node (n4) at (.7,-.7) {$4$};
    \draw (c) edge (v1) edge (v2) edge (v3) edge (n3) edge (n4)
    (v2) edge (v4) (v3) edge (n2)
    (v1) edge (v2) edge (v3)
    (v4) edge (n1) edge[loop above] (v4);
    \draw (c) edge[loop right] (c);
\end{tikzpicture}
\]
There may be loop edges and each non-special (black) vertex has an associated genus, indicated in the drawing by the small number next to the vertex.
The cohomological grading on $\widehat G_M$ is by the number of edges, excluding the external legs, plus the cohomological degree of the decoration of the special vertex if the $\FA$-module $M$ carries a cohomological grading.
The $\ss_n$-action on $\widehat G_M(g,n)$ is by permuting the labels on the legs. %
The differential has the form $\delta= \delta_{split} + \delta_{loop}+ d_M$, with $d_M$ induced from the internal differential on $M$ (if present), while $\delta_{split}$ splits vertices and $\delta_{loop}$ acts as follows on the black vertices (only):
\[
\delta_{loop} :
\begin{tikzpicture}
    \node[int, label=0:{$\scriptstyle h$}] (v) at (0,0) {};
    \draw (v) edge +(-.5,-.5) edge +(0,-.5) edge +(.5,-.5)
    ;
\end{tikzpicture}
\mapsto
\begin{tikzpicture}
    \node[int, label=0:{$\scriptstyle h-1$}] (v) at (0,0) {};
    \draw (v) edge +(-.5,-.5) edge +(0,-.5) edge +(.5,-.5)
    edge[loop above] (v);
\end{tikzpicture}.
\]
Finally we define 
\[
G_M(g,n)\subset \widehat G_M(g,n)
\]
to be the subcomplex spanned by graphs in which each of the black vertices has genus 0, and no loop edge is attached to a black vertex.
\begin{lem}\label{lem:G vs hat G}
    The inclusion $G_M(g,n)\subset \widehat G_M(g,n)$ is a quasi-isomorphism for each $(g,n)$.
\end{lem}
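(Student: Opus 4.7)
The strategy is to exhibit a filtration on $\widehat G_M$ whose associated spectral sequence has $E_1$-page canonically identified with $G_M$, and whose $d_1$-differential recovers the internal differential on $G_M$; comparison with the trivial filtration on $G_M$ then forces the inclusion to be a quasi-isomorphism. Concretely, I filter $\widehat G_M$ decreasingly by $F^p = $ the span of graphs having at least $p$ non-loop edges. The differential $\delta=\delta_{split}+\delta_{loop}+d_M$ respects this filtration: $\delta_{loop}$ creates only loops at black vertices (preserving the non-loop edge count), $\delta_{split}$ adds a new non-loop edge and may additionally convert existing loops at the split vertex into non-loop edges (strictly increasing the count), and $d_M$ acts only on the decoration at $*$.

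On the $E_0$-page only $\delta_{loop}+d_M$ survives, and this acts independently at each black vertex. The heart of the argument is a local computation: for a black vertex $v$ of external valence $n_v$, the space of decorations $(h_v,\ell_v)$ with the operator $\delta_{loop}\colon (h_v,\ell_v)\mapsto (h_v-1,\ell_v+1)$ forms a small complex whose cohomology is one-dimensional, concentrated at $(0,0)$ when $n_v\geq 3$ and zero otherwise. The key sign input is that the $\ss_2$ automorphism swapping any two loops at a vertex acts by $-1$ on the edge orientation $\det E(\Gamma)$ while acting trivially on the weight-zero decoration $\P_M^0=\qq$, so every vertex carrying $\ell_v\geq 2$ loops vanishes. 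The remaining local complex at $\ell_v\in\{0,1\}$ is the two-term sequence $\qq_{(h,0)}\xrightarrow{\sim}\qq_{(h-1,1)}$ for each $h\geq 1$, which kills every state except $(0,0)$. Globally this shows $E_1\cong G_M$ as an $\FB$-module.

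Finally I would verify that $d_1$ is induced by $\delta_{split}$ and agrees under this identification with the differential on $G_M$ (graphs in $G_M$ have no loops on black vertices, so the decreasing filtration by non-loop edges coincides with the edge filtration on $G_M$). The inclusion $G_M\hookrightarrow \widehat G_M$ is then a filtered chain map inducing the identity on $E_1$, and the spectral-sequence comparison theorem finishes the proof. The main technical obstacle is the local acyclicity step, where one must carefully track signs coming from the edge orientation and the wreath-product action $\ss_{\ell_v}\wr \ss_2$ on the loops at a vertex; once this is in hand, the rest of the argument is formal.
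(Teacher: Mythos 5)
Your proof is correct and follows essentially the same route as the paper, which takes the spectral sequence of the filtration by the number of vertices (citing the argument of Payne--Willwacher) so that the $E_0$-differential is again $\delta_{loop}+d_M$ and the inclusion becomes an isomorphism on $E_1$-pages; your filtration by non-loop edges isolates the same associated-graded differential, and your local computation at black vertices (two loops killing a graph via the sign on $\det E(\Gamma)$, and the two-term complexes $(h,0)\xrightarrow{\sim}(h-1,1)$) is exactly the content the paper leaves to the cited reference.
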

\begin{proof}
    We use the argument of \cite[Proof of Theorem 3.3]{PayneWillwacher21}:
    Take a spectral sequence on the filtration by the number of vertices. Then the inclusion $G_M(g,n)\subset \widehat G_M(g,n)$ induces an isomorphism on the $E_1$-pages of the spectral sequences. Hence the result follows. 
\end{proof}

In the special case that $M=C_\lambda$ for $\lambda$ a Young diagram we abbreviate
\begin{align*}
\widehat G_\lambda &:= \widehat G_{C_\lambda} & G_\lambda &:= G_{C_\lambda},
\end{align*}
and in the special case $M=\tilde C_{1^k}$ we write 
\begin{align*}
\widehat G_{\tilde 1^k} &:= \widehat G_{\tilde C_{1^k}} & G_{\tilde 1^k} &:= G_{\tilde C_{1^k}}.
\end{align*}

\begin{rem}\label{rem:GM exact}
    The assignment $M\to G_M(g,n)$ is an exact functor from the category of dg $\FA$-modules to the category of dg $\ss_n$-modules. That is, exact sequences of $\FA$-modules $0\to M \to M'\to M''\to 0$ give rise to exact sequences of cochain complexes 
    \[
    0\to G_M(g,n) \to G_{M'}(g,n) \to G_{M''}(g,n) \to 0.
    \]
\end{rem}

\begin{rem}
    Note that the definition of the graph complex above only uses the $\FS$-module structure of $M$, not the full $\FA$-module structure, and hence ``$\FA$'' could be replaced with ``$\FS$'' throughout this section.
\end{rem}

\begin{rem}\label{rem:GC relation}
The graph complexes $\F\T^{17}$ and $\F\Q^{19}$ appearing in the proofs of Theorems \ref{thm:upto17} and \ref{thm:19} in Section \ref{sec:1719} above are related to our graph complexes $G_M$ and $\widehat G_M$ as 
\begin{align*}
    \F\T^{17}(g,n) &\cong  
    \widehat G_{\tilde 1^{17}}(g-1,n)[-17] \oplus 
    \widehat G_{2^{7}}(g-2,n)[-17]
    \\
    \F\Q^{19}(g,n) &\cong  
    \widehat G_{\tilde 1^{19}}(g-1,n)[-19] \oplus 
    \widehat G_{2^5 1^6}(g-2,n)[-19].
\end{align*}
Given Lemma \ref{lem:G vs hat G}, %
we hence have that 
\begin{align*}
    H^{17+e}(\F\T^{17})(g,n) &\cong  
    H^e(G_{\tilde 1^{17}})(g-1,n) \oplus 
    H^e(G_{2^{7}})(g-2,n)
    \\
    H^{19+e}(\F\Q^{19})(g,n) &\cong  
    H^e(G_{\tilde 1^{19}})(g-1,n) \oplus 
    H^e(G_{2^{5}1^6})(g-2,n).
\end{align*}

The vanishing results for the cohomology of $\F\T^{17}$ and $\F\Q^{19}$ used in Section \ref{sec:1719} are hence equivalent to vanishing results for the respective smaller complexes $G_M$. In this form, they are shown in Proposition \ref{prop:lowdegree} below.
\end{rem}

\subsubsection{Blown-up picture}\label{sec:blown up}
In practice, it is convenient to draw generators (graphs) of the graph complex $G_\lambda$ in a different way, as in \cite{PayneWillwacher21, PayneWillwacher23}.
Concretely, we may remove the special vertex and instead draw a disconnected graph
\[
\begin{tikzpicture}
    \node[ext] (c) at (0,0) {$*$};
    \node[int] (v1) at (0,.7) {};
    \node[int] (v2) at (-.7,.7) {};
    \node[int] (v3) at (.7,.7) {};
    \node (n1) at (-1.4,.7) {$1$};
    \node (n2) at (1.4,.7) {$2$};
    \node (n3) at (-.7,-.7) {$3$};
    \node (n4) at (.7,-.7) {$4$};
    \draw (c) edge (v1) edge (v2) edge (v3) edge (n3) edge (n4)
    (v2) edge (v4) (v3) edge (n2)
    (v1) edge (v2) edge (v3);
\end{tikzpicture}
\quad
\leftrightarrow
\quad
\begin{tikzpicture}
    \node (c) at (0,0) {$\phantom{**}$};
    \node[int] (v1) at (0,.7) {};
    \node[int] (v2) at (-.7,.7) {};
    \node[int] (v3) at (.7,.7) {};
    \node (n1) at (-1.4,.7) {$1$};
    \node (n2) at (1.4,.7) {$2$};
    \node (n3) at (-.7,-.7) {$3$};
    \node (n4) at (.7,-.7) {$4$};
    \draw (c) edge (v1) edge (v2) edge (v3) edge (n3) edge (n4)
    (v2) edge (v4) (v3) edge (n2)
    (v1) edge (v2) edge (v3);
\end{tikzpicture}
\]
Let $n=|\lambda|$ be the number of boxes of $\lambda$, and let $r\geq n$ be the valence of the special vertex in the graph we consider. We assume for simplicity that the half-edges at the special vertex are numbered $1,\dots,r$.
Then the decoration at the special vertex is an element of 
\[
C_\lambda(r) = \Ind_{\ss_{n}\times \ss_{n-r}}^{\ss_r} V_\lambda \boxtimes \mathbf{1} 
=
\bigoplus_{\substack{A\subset \{1,\dots,r\} \\ |A|=n}} V_\lambda.
\]
Hence the decoration at the special vertex can be considered combinatorially as a selection of a subset $A$ of the half-edges at the special vertex, of size $|A| = n$, together with an element of $V_\lambda$. We may enhance our drawing of the graph by marking the ``marked'' half-edges (those in $A$) at the special vertex with a symbol $\omega$, and the others with a symbol $\epsilon$, for example:
\[
\begin{tikzpicture}
    \node[int] (v1) at (0,.7) {};
    \node[int] (v2) at (-.7,.7) {};
    \node[int] (v3) at (.7,.7) {};
    \node (n1) at (-1.4,.7) {$1$};
    \node (n2) at (1.4,.7) {$2$};
    \node (n3) at (-.7,-1.4) {$3$};
    \node (n4) at (.7,-1.4) {$4$};
    \node (o1) at (-.7,0) {$\omega$};
    \node (o2) at (0,0) {$\omega$};
    \node (o3) at (.7,0) {$\omega$};
    \node (e1) at (-.7,-.7) {$\epsilon$};
    \node (e2) at (.7,-.7) {$\epsilon$};
    \draw (o2) edge (v1) (o1) edge (v2) (o3) edge (v3) 
    (e1) edge (n3)  (e2) edge (n4)
    (v2) edge (v4) (v3) edge (n2)
    (v1) edge (v2) edge (v3);
\end{tikzpicture}
\]
The element of $V_\lambda$ that is part of the decoration at the special vertex is omitted from the drawing.
We call this representation of generators the \emph{blown-up picture} and the connected components of the graph the \emph{blown-up components}. Note that in the blown-up picture there are three kinds of legs: numbered legs, $\omega$-legs, and $\epsilon$-legs. Because every generator is connected (before blowing up), each blown-up component has at least one $\omega$- or $\epsilon$-leg.

\subsection{Euler characteristic of \texorpdfstring{$G_\lambda$}{Glambda}}
We introduce the functions 
\begin{align*}
    E_\ell&:= \frac 1 \ell \sum_{d\mid \ell}\mu(\ell/d)\frac 1 {u^d},
    &
    \lambda_\ell &:= u^\ell (1-u^\ell)\ell,
\end{align*}
\begin{equation*} %
    B(z) := \sum_{r\geq 2}\frac{B_r}{r(r-1)} \frac 1 {z^{r-1}},
  \end{equation*}
where $\mu$ is the M\"obius function and $B_r$ is the $r$th Bernoulli number. Additionally, we define $U_\ell(X,u) = \exp(\log U_\ell(X,u))$ where
\begin{align*}
    \log U_\ell(X,u)
   &=   
    \log \frac {(-\lambda_\ell)^X \Gamma(-E_\ell+X) }{\Gamma(-E_\ell)}
    \\&=
        X\left(\log(\lambda_\ell E_\ell)-1 \right)+(-E_\ell+X-\textstyle{\frac 1 2} )\log(1-\textstyle{\frac X{E_\ell}}) + B(-E_\ell+X)- B(-E_\ell).
\end{align*}
Furthermore, for $f(w_1,\dots,w_k)$ a polynomial or power series in variables $w_1,\dots,w_k$, we denote by $T_{W}f$ the coefficient of the monomial $W$ in $f$.
Given representations $V$ and $W$ of $\mathbb{S}_m$ and $\mathbb{S}_n$, we define
\[V \hboxtimes W := \Ind_{\ss_m\times \ss_n}^{\ss_{m+n}} ( V \boxtimes W ).\]
Given partitions $\lambda$ and $\mu$, we also use the shorthand
\[\lambda \hboxtimes \mu = V_\lambda \hboxtimes V_\mu.\]

\begin{thm}\label{thm:euler general}
    The $\ss$-equivariant Euler characteristic of %
    $G_{1^{a_1}\hboxtimes \cdots \hboxtimes 1^{a_k}}$ satisfies:
\begin{multline*}
\sum_{g,n\geq 0} u^{g+n} \chi_{\ss_n}(G_{1^{a_1}\hboxtimes \cdots \hboxtimes 1^{a_k}}(g,n))
  \\=
  \  (-1)^{a_1+\cdots +a_k} T_{w_1^{a_1}\cdots w_k^{a_k}}\bigg(
  \prod_{\ell\geq 1} 
  \frac { 
    U_\ell(\frac 1 \ell \sum_{d\mid \ell} \mu(\ell/d) (-p_d  +1-(w_1^d+ \cdots + w_k^d) ), u )
  }
  { 
    U_\ell(\frac 1 \ell \sum_{d\mid \ell} \mu(\ell/d) (-p_d), u )
  }-1\bigg)\, .
\end{multline*} 
\end{thm}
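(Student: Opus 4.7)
The plan is to compute the $\mathbb{S}_n$-equivariant Euler characteristic directly from the blown-up picture of Section~\ref{sec:blown up}. A generator of $G_{1^{a_1}\hboxtimes \cdots \hboxtimes 1^{a_k}}(g,n)$ decomposes, after blow-up, into a disjoint union of connected components, each a connected graph with genus-zero black vertices, numbered legs indexed by $\{1,\dots,n\}$, $\omega$-legs of $k$ types (with $a_i$ of type $i$), and $\epsilon$-legs. The irreducible representation decorating the special vertex, $V_{1^{a_1}}\boxtimes\cdots\boxtimes V_{1^{a_k}}$, enforces antisymmetrization of the $\omega$-legs of each fixed type; this is what will ultimately supply the overall sign $(-1)^{a_1+\cdots+a_k}$ and motivates introducing formal variables $w_1,\dots,w_k$ tracking $\omega$-legs, with the final extraction $T_{w_1^{a_1}\cdots w_k^{a_k}}$ picking out the sign-isotypic components.

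The first step would be to reduce the computation to connected blown-up components. Since a blown-up generator is a disjoint union of connected pieces, and the symmetric groups $\mathbb{S}_n$ and $\mathbb{S}_{a_i}$ act by permuting labels on legs of their respective types, the full generating function factors as a plethystic exponential of the connected one, in the variables $p_d$ (power sums recording the $\mathbb{S}_n$-action on numbered legs) and $w_i^d$ (for $\omega$-legs of type $i$). The substitutions $\frac{1}{\ell}\sum_{d\mid \ell}\mu(\ell/d)(\cdot)$ are the standard M\"obius inversions that isolate the contribution of connected graphs with a cyclic automorphism of order $\ell$ permuting edges, exactly as in the Euler-characteristic computations of \cite{PayneWillwacher21, PayneWillwacher23}.

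Next I would compute the connected contribution loop-order by loop-order. For a connected blown-up component of loop order $\ell$ with $X$ external leaves, the sum over its internal graph structures — counted with the appropriate edge-ordering signs — assembles into the factor $U_\ell(X,u)$, and the stated identity $U_\ell(X,u)=(-\lambda_\ell)^X\Gamma(-E_\ell+X)/\Gamma(-E_\ell)$ is the characteristic generating function for connected graphs of fixed loop order on $X$ marked inputs; its logarithmic form in terms of Bernoulli numbers then follows from Stirling's series for $\log\Gamma$. The ratio of $U_\ell$-factors in the statement encodes the condition from Section~\ref{sec:blown up} that every connected blown-up component must carry at least one $\omega$- or $\epsilon$-leg: the numerator counts all ways of placing leaves of each of the three kinds (contributing $-p_d$ for numbered legs, $+1$ for $\epsilon$-legs, and $-w_i^d$ for $\omega$-legs of type $i$), and division by $U_\ell$ with only $-p_d$ in the argument subtracts the (fictitious) contribution from components with no $\omega$- or $\epsilon$-legs.

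The main obstacle I anticipate is the careful bookkeeping of three sign conventions at once: the edge-ordering signs on the graph (which produce the $(-\lambda_\ell)^X$ factor), the antisymmetrization signs from the sign representations at the special vertex, and the M\"obius signs entering through the plethystic exponential. Each of these has appeared separately in prior work on graph-complex Euler characteristics, but combining them and deriving the exact substitution $-p_d + 1 - (w_1^d+\cdots+w_k^d)$ requires care. I would address this by first verifying the special case $k=0$, $\lambda=\emptyset$ (where the formula should reduce to a standard expression from \cite{PayneWillwacher21, PayneWillwacher23}), and then inductively introducing $\omega$-legs of each type $i$ by comparing the effect on the generating function of adjoining one $\epsilon$-leg (trivial action) with that of adjoining one $\omega$-leg of type $i$ (sign action).
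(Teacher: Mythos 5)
Your proposal is correct and follows essentially the same route as the paper, whose entire proof is a one-line reference to the $k=1$ case in \cite{PayneWillwacher23}: you reconstruct exactly that argument (decomposition into connected blown-up components, plethystic exponential with M\"obius inversion over cyclic automorphism orders, the ratio of $U_\ell$-factors enforcing that each component carries an $\omega$- or $\epsilon$-leg, and the sign/extraction coming from antisymmetrizing the $\omega$-legs of each type). One small slip to fix when writing it up: in your third paragraph you call $\ell$ the loop order of a component, whereas --- as your second paragraph correctly states --- $\ell$ indexes the order of the cyclic automorphism in the equivariant count, while $u$ tracks genus plus marking number.
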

\begin{proof}
    The proof is a straightforward extension of the proof of the special case $k=1$ provided in \cite[Section 7]{PayneWillwacher23}.
\end{proof}

\begin{cor}\label{cor:EC1}
    The $\ss$-equivariant Euler characteristic of the graph complex $G_{M}$ for $M=\tilde C_{1^{a}}$ satisfies 
    \begin{align*}
\sum_{g,n\geq 0} u^{g+n} \chi_{\ss_n}(G_{M}(g,n))
  =(-1)^{a-1}
 T_{\leq a-1}\bigg(
  \prod_{\ell\geq 1} 
  \frac { 
    U_\ell(\frac 1 \ell \sum_{d\mid \ell} \mu(\ell/d) (-p_d  +1-w^d ), u )
  }
  { 
    U_\ell(\frac 1 \ell \sum_{d\mid \ell} \mu(\ell/d) (-p_d), u )
  }-1\bigg)\, ,
\end{align*}
where we used the notation $T_{\leq r}:=\sum_{j=0}^r T_{w^j}$.
\end{cor}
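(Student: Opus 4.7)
The plan is to reduce the statement to Theorem \ref{thm:euler general} (applied in the case $k=1$) using the finite resolution \eqref{equ:tilde C fin res} of $\tilde C_{1^a}$ by the $\FA$-modules $C_{1^j}$, combined with the exactness of the functor $M\mapsto G_M(g,n)$ recorded in Remark \ref{rem:GM exact}.

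First, I would take the resolution
\[
0 \to \tilde C_{1^a} \to C_{1^{a-1}} \to \cdots \to C_{1} \to C_{1^0} \to 0
\]
and apply the exact functor $G_{-}(g,n)$ to obtain an exact sequence of complexes of $\ss_n$-modules
\[
0 \to G_{\tilde C_{1^a}}(g,n) \to G_{C_{1^{a-1}}}(g,n) \to \cdots \to G_{C_{1}}(g,n) \to G_{C_{1^0}}(g,n) \to 0.
\]
Since the $\ss_n$-equivariant Euler characteristic is additive on short exact sequences (and therefore its alternating sum vanishes on any exact sequence), this yields
\[
\chi_{\ss_n}(G_{\tilde C_{1^a}}(g,n)) = \sum_{j=0}^{a-1} (-1)^{a-1-j} \chi_{\ss_n}(G_{C_{1^j}}(g,n)).
\]
Multiplying by $u^{g+n}$ and summing over $g,n\geq 0$, the same identity holds for the generating series.

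Next, I would apply Theorem \ref{thm:euler general} in the special case $k=1$, with $a_1 = j$, which gives
\[
\sum_{g,n\geq 0} u^{g+n} \chi_{\ss_n}(G_{C_{1^j}}(g,n)) = (-1)^{j} T_{w^j} \bigg( \prod_{\ell\geq 1} \frac{U_\ell(\tfrac{1}{\ell}\sum_{d\mid\ell}\mu(\ell/d)(-p_d+1-w^d),u)}{U_\ell(\tfrac{1}{\ell}\sum_{d\mid\ell}\mu(\ell/d)(-p_d),u)} - 1 \bigg),
\]
with the single variable $w=w_1$. Substituting this into the alternating sum above, the signs combine as $(-1)^{a-1-j}(-1)^j = (-1)^{a-1}$, which is independent of $j$, and the operator $\sum_{j=0}^{a-1} T_{w^j}$ is exactly $T_{\leq a-1}$ by the definition given in the statement. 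This produces the right-hand side of the corollary.

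There is no real obstacle here beyond bookkeeping: the only nontrivial inputs are (i) the resolution \eqref{equ:tilde C fin res}, (ii) exactness of $G_{-}$ (Remark \ref{rem:GM exact}), both of which are already in hand, and (iii) Theorem \ref{thm:euler general} in the case $k=1$, which is assumed. The mildly delicate point to verify carefully is the sign convention in the alternating sum and the compatibility between $\ss_n$-equivariant Euler characteristics (valued in the ring of symmetric functions via the power sums $p_d$) and the additivity used above, but this is routine since the exact sequence of complexes is $\ss_n$-equivariant in each cohomological degree.
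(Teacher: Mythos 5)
Your proposal is correct and follows essentially the same route as the paper, which likewise deduces the corollary from Theorem \ref{thm:euler general} with $k=1$ by applying the exact functor $G_{-}$ (Remark \ref{rem:GM exact}) to the resolution \eqref{equ:tilde C fin res} and taking alternating sums of Euler characteristics. Your sign bookkeeping $(-1)^{a-1-j}(-1)^{j}=(-1)^{a-1}$ and the identification $\sum_{j=0}^{a-1}T_{w^j}=T_{\leq a-1}$ are exactly the details the paper leaves implicit.
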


\begin{proof}
    This follows from Theorem \ref{thm:euler general} for $k=1$, using the resolution \eqref{equ:tilde C fin res} for the $\FA$-module $\tilde C_{1^k}$, see also Remark \ref{rem:GM exact}. 
\end{proof}
\begin{cor}\label{cor:EC2}
    The $\ss$-equivariant Euler characteristic of the graph complex $G_{2^k1^\ell}$ satisfies 
    \begin{multline*}
\sum_{g,n\geq 0} u^{g+n} \chi_{\ss_n}(G_{2^k1^\ell}(g,n))
  \\=
  \  (T_{w_1^{k+\ell}w_2^k} -T_{w_1^{k+\ell+1}w_2^{k-1}})\bigg(
  \prod_{\ell\geq 1} 
  \frac { 
    U_\ell(\frac 1 \ell \sum_{d\mid \ell} \mu(\ell/d) (-p_d  +1-w_1^d-w_2^d ), u )
  }
  { 
    U_\ell(\frac 1 \ell \sum_{d\mid \ell} \mu(\ell/d) (-p_d), u )
  }-1\bigg)\, .
\end{multline*}
\end{cor}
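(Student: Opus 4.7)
The plan is to deduce Corollary \ref{cor:EC2} from Theorem \ref{thm:euler general} by expressing the $\FA$-module $C_{2^k 1^\ell}$ as a virtual combination of $\FA$-modules of the form $C_{1^a \hboxtimes 1^b}$, and then using the exactness of $M \mapsto G_M$ recorded in Remark \ref{rem:GM exact}.

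The algebraic heart of the argument is the dual Pieri rule: for any $a, b \geq 0$ one has
\[
V_{1^a} \hboxtimes V_{1^b} \cong \bigoplus_{j=0}^{\min(a,b)} V_{2^j,\, 1^{a+b-2j}}
\]
as $\ss_{a+b}$-representations. First I would apply this with $(a,b) = (k+\ell, k)$ and with $(a,b) = (k+\ell+1, k-1)$. In the first case the sum runs over $j = 0,\dots,k$; in the second over $j = 0,\dots,k-1$. Subtracting, all irreducible summands with $j \leq k-1$ cancel pairwise and only the $j=k$ summand survives, so
\[
V_{1^{k+\ell}} \hboxtimes V_{1^k} \,-\, V_{1^{k+\ell+1}} \hboxtimes V_{1^{k-1}} \,=\, V_{2^k 1^\ell}
\]
in the Grothendieck group of $\ss_{2k+\ell}$-representations. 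The assignment $\rho \mapsto C_\rho$ is exact on representations of $\ss_m$ concentrated in arity $m$ (being an induction followed by extension by zero), so this descends to the identity
\[
[C_{2^k 1^\ell}] \,=\, [C_{1^{k+\ell} \hboxtimes 1^k}] \,-\, [C_{1^{k+\ell+1} \hboxtimes 1^{k-1}}]
\]
in the Grothendieck group of $\FA$-modules.

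By Remark \ref{rem:GM exact} the functor $M \mapsto G_M$ is exact, so the $\ss_n$-equivariant Euler characteristic is additive in $M$. Hence
\[
\sum_{g,n} u^{g+n} \chi_{\ss_n}\bigl(G_{2^k 1^\ell}(g,n)\bigr) \,=\, \sum_{g,n} u^{g+n} \Bigl( \chi_{\ss_n}\bigl(G_{1^{k+\ell} \hboxtimes 1^k}(g,n)\bigr) \,-\, \chi_{\ss_n}\bigl(G_{1^{k+\ell+1} \hboxtimes 1^{k-1}}(g,n)\bigr) \Bigr).
\]
Applying Theorem \ref{thm:euler general} with $K = 2$ to each of the two terms on the right evaluates each as an overall sign $(-1)^{a_1+a_2}$ times the monomial extraction $T_{w_1^{a_1} w_2^{a_2}}$ applied to the two-variable generating function appearing in the statement of Corollary \ref{cor:EC2}. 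Since both index pairs $(k+\ell,k)$ and $(k+\ell+1,k-1)$ satisfy $a_1+a_2 = 2k+\ell$, these signs agree and can be collected as a single overall prefactor, leaving the bracket $(T_{w_1^{k+\ell}w_2^k} - T_{w_1^{k+\ell+1}w_2^{k-1}})$ acting on the generating function, as stated.

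The only substantive step is the Pieri-type cancellation; everything else is bookkeeping. The main obstacle, such as it is, is being careful with the sign conventions coming from Theorem \ref{thm:euler general} and checking that the functor $C_{-}$ really respects the Grothendieck-group identity, but neither involves any new geometric or homological input.
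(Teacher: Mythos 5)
Your proposal is correct and follows essentially the same route as the paper: the paper also invokes Pieri's rule to realize $V_{2^k1^\ell}$ as the cokernel of the inclusion $1^{k+\ell+1}\hboxtimes 1^{k-1}\to 1^{k+\ell}\hboxtimes 1^{k}$, obtains a short exact sequence of graph complexes from the exactness of $M\mapsto G_M$, and applies Theorem \ref{thm:euler general} with $k=2$; your Grothendieck-group formulation is just a repackaging of that short exact sequence. The one point to watch is the overall prefactor $(-1)^{a_1+a_2}=(-1)^{2k+\ell}=(-1)^{\ell}$ from Theorem \ref{thm:euler general}, which you correctly note is common to both terms but then silently drop --- exactly as the paper's statement does.
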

\begin{proof}
    By Pieri's rule, the irreducible representation $V_{2^k1^\ell}$ is the cokernel of the inclusion 
    \[
    1^{k+\ell+1}\hboxtimes 1^{k-1} \to 1^{k+\ell}\hboxtimes 1^{k}.
    \]
    Hence, there is an exact sequence 
    \[
    0\to G_{1^{k+\ell+1}\hboxtimes 1^{k-1}}
    \to G_{1^{k+\ell}\hboxtimes 1^{k}}
    \to 
    G_{2^k1^\ell} \to 0,
    \]
    and the stated Euler characteristic formula follows from Theorem \ref{thm:euler general}.
\end{proof}

\subsection{Cohomology of  \texorpdfstring{$G_\lambda$}{Glambda}}
The cohomology of the graph complex $G_\lambda$ is complicated and not known in general. In this section, we state and prove some partial results.

\subsubsection{In high degrees}
\begin{lem}\label{lem:upper deg bound}
    Let $\lambda$ be a Young diagram with $N$ boxes. Then the complex $G_\lambda(g,n)$ is concentrated in cohomological degrees $\leq 3g+n-N$.
\end{lem}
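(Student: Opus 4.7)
The plan is to bound the cohomological degree of a generator of $G_\lambda(g,n)$ by a half-edge count, from which the lemma follows directly.

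First, I would unpack the combinatorial data of a generator. It is a connected stable graph $\Gamma$ with one special vertex $*$ of genus $0$ and some number $v$ of black vertices, each of genus $0$ with no attached self-loop; by stability each black vertex therefore has valence at least $3$. The graph carries $n$ numbered legs, and $*$ is decorated by an element of $C_\lambda(r)$, where $r$ is the valence of $*$. The cohomological degree of such a generator is just the number of edges $E$ of $\Gamma$ (since $C_\lambda$ is concentrated in internal degree $0$), and because $\Gamma$ is connected with all vertices of genus $0$, the total genus of $\Gamma$ is $g = E - v$.

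The key step is to combine three simple facts. Summing valences over all vertices and counting each edge twice and each numbered leg once yields
\[\sum_{v} n_v + r = 2E + n.\]
Stability of the black vertices gives $\sum_{v} n_v \geq 3v$, while the requirement that the decoration at $*$ be nonzero forces $r \geq N$, since $C_\lambda(r) = 0$ for $r < N$. Putting these together gives $3v + N \leq 2E + n$, and substituting $v = E - g$ rearranges to $E \leq 3g + n - N$, the desired bound on the cohomological degree of every generator.

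There is no real obstacle here beyond getting the accounting right. The only points to be careful about are identifying the cohomological grading with the edge count in $G_\lambda$, correctly translating the triviality of $C_\lambda$ in arities below $N$ into the lower bound $r \geq N$ on the valence of the special vertex, and recalling that stability is imposed on the black vertices (which are genus $0$ with no self-loops) but is not needed on $*$ for this argument.
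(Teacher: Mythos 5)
Your proof is correct and follows essentially the same route as the paper's: the handshake count $\sum_v n_v + r = 2E + n$, the trivalence bound $\sum_v n_v \geq 3v$, the arity constraint $r \geq N$ forced by $C_\lambda(r)=0$ for $r<N$, and the Euler relation $v = E - g$ combine in exactly the way the paper's argument does (the paper merely writes $r = N+p$ with $p\geq 0$ in place of your inequality). No gaps.
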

\begin{proof}
    Consider some graph $\Gamma\in G_\lambda(g,n)$ with $v$ internal vertices, degree $k$ and whose special vertex has valence $N+p$. Note that $k$ is the total number of edges, not counting the numbered legs as edges.
    Then by the trivalence condition on the internal vertices we have that $2k + n \geq 3v + N + p$.
    On the other hand we have $v=k-g$, and hence $k\leq 3g+n-N-p\leq 3g+n-N$ as desired.
\end{proof}
\begin{cor}\label{cor:G tilde G top}
    The complex $G_{\tilde 1^N}(g,n)$ is concentrated in cohomological degrees $\leq 3g+n-N$. Furthermore, the top degree cohomology satisfies 
    \[
    H^{3g+n-N}(G_{\tilde 1^N})(g,n)
    =
    H^{3g+n-N}(G_{1^N})(g,n).
    \]
\end{cor}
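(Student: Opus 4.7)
My plan has two parts: first, verify that the degree bound of Lemma~\ref{lem:upper deg bound} extends verbatim to $G_{\tilde 1^N}$; second, deduce the top-degree isomorphism via a short exact sequence of $\FA$-modules combined with the long exact sequence in cohomology.

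For the degree bound, I would observe that the proof of Lemma~\ref{lem:upper deg bound} uses only the fact that the special vertex has valence at least $N$, which in turn follows from $C_\lambda(r)=0$ for $r<|\lambda|=N$. Since $\tilde C_{1^N}(r)=0$ for $r<N$ as well, the same counting ($2k+n\ge 3v+N$ with $v=k-g$) yields $k\le 3g+n-N$, giving the first assertion.

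For the top-degree comparison, I would extract from the exact sequence \eqref{equ:C long exact} the short exact sequence of $\FA$-modules
\[
0\to \tilde C_{1^{N+1}} \to C_{1^N} \to \tilde C_{1^N}\to 0,
\]
in which the surjection is the defining quotient $C_{1^N}\to \tilde C_{1^N}$, and the kernel $\mathrm{im}(C_{1^{N+1}}\to C_{1^N})$ is identified with $\tilde C_{1^{N+1}}=\coker(C_{1^{N+2}}\to C_{1^{N+1}})$ using exactness of \eqref{equ:C long exact}. Applying the exact functor $M\mapsto G_M(g,n)$ from Remark~\ref{rem:GM exact} yields a short exact sequence of cochain complexes, with associated long exact sequence reading, around $k=3g+n-N$,
\[
H^k(G_{\tilde 1^{N+1}}(g,n))\to H^k(G_{1^N}(g,n))\to H^k(G_{\tilde 1^N}(g,n))\to H^{k+1}(G_{\tilde 1^{N+1}}(g,n)).
\]
By the first part of the corollary applied with $N$ replaced by $N+1$, both outer terms vanish since they live in degrees $\le 3g+n-(N+1)=k-1$. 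Therefore the middle map is an isomorphism, which is the desired statement.

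I do not foresee a serious obstacle: the main content is the mild strengthening of Lemma~\ref{lem:upper deg bound} in the first step, after which the second step is a short application of the long exact sequence. The essential insight is that replacing $\tilde C_{1^N}$ by $\tilde C_{1^{N+1}}$ lowers the cohomological degree ceiling of the associated graph complex by exactly one, so the top-degree cohomology of $G_{\tilde 1^N}$ is sandwiched between zeros and coincides with that of $G_{1^N}$.
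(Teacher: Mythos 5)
Your proof is correct, and it takes a genuinely cleaner route than the paper's for the second assertion. The paper also establishes the degree bound first (it simply notes $G_{\tilde 1^N}(g,n)$ is a quotient of $G_{1^N}(g,n)$, which is even shorter than your re-run of the valence count, though both work), but for the top-degree comparison it uses the full resolution $\cdots \to C_{1^{N+2}}\to C_{1^{N+1}} \to C_{1^N} \to \tilde C_{1^N} \to 0$ and the associated spectral sequence, arguing that on the $E^1$-page only $H^{3g+n-N}(G_{1^N})$ can contribute to the top total degree because $H^j(G_{1^{N+p}})(g,n)$ sits in total degree $j-p\le 3g+n-N-2p$ for $p>0$. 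You instead package the entire tail of that resolution as the single term $\tilde C_{1^{N+1}}=\ker\bigl(C_{1^N}\to\tilde C_{1^N}\bigr)$, apply the exact functor of Remark~\ref{rem:GM exact} to the resulting short exact sequence, and kill both outer terms of the long exact sequence using the first assertion with $N$ replaced by $N+1$. Your identification of the kernel with $\tilde C_{1^{N+1}}$ is justified by exactness of \eqref{equ:C long exact}, and the vanishing of $H^k$ and $H^{k+1}$ of $G_{\tilde 1^{N+1}}(g,n)$ for $k=3g+n-N$ follows since that complex is concentrated in degrees $\le k-1$. What your approach buys is the avoidance of any spectral sequence bookkeeping (convergence, degree conventions on later pages); what the paper's approach buys is that the $E^1$-page \eqref{equ:G 1N sseq} is reused elsewhere (e.g., in the subsequent corollary on $H^0$ and $H^1$ of $G_{\tilde 1^N}$ and in the remark following Theorem~\ref{thm:n0 cohom}), so setting it up once pays off beyond this corollary.
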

\begin{proof}
    The first assertion is obvious from the preceding lemma, since $G_{\tilde 1^N}(g,n)$ is a quotient of $G_{1^N}(g,n)$.
    For the second assertion we use the resolution of the FA module $\tilde C_{1^N}$
    \begin{equation*}
\cdots \to C_{1^{N+2}}\to C_{1^{N+1}} \to C_{1^N} \to \tilde C_{1^N} \to 0, 
\end{equation*}
obtained by truncation of \eqref{equ:C long exact}.
From this we obtain a spectral sequence on the graph complexes converging to $G_{\tilde 1^N}(g,n)$ whose $E_1$-page is 
\begin{equation}
\label{equ:G 1N sseq}
\cdots \to H(G_{1^{N+2}})(g,n)\to H(G_{1^{N+1}})(g,n) \to H(G_{1^N})(g,n).
\end{equation}
(This spectral sequence already occurred in some form in \cite{PayneWillwacher23} in the special case $N=11$, $n=0$.)
By the preceding lemma we have $H^{k}(G_{1^{N}})(g,n)=0$ for $k > 3g+n-N-p$. Hence the only contribution to the top degree on the $E^1$-page comes from $H^{ 3g+n-N}(G_{1^{N+p}})(g,n)$. The remaining entries $H(G_{1^{N+p}})(g,n)$ on the $E^1$-page (for $p>0$) are concentrated in total cohomological degrees $\leq 3g+n-N-2$. Hence by degree reasons there is no further cancellation in the top degree on later pages of the spectral sequence, and the corollary follows.
\end{proof}

\subsubsection{In low degrees}
Here, we provide a complete description of the cohomology of $G_\lambda$ in degree $0$ and a nearly complete description in degree $1$.
\begin{prop}\label{prop:lowdegree}
Let $\lambda$ be a Young diagram with $N$ boxes.
\begin{enumerate}
    \item Suppose $\lambda$ has at least two columns. Then 
    \[
    H^0(G_\lambda)(g,n)=
    \begin{cases}
        V_\lambda & \text{if $g=0$ and $n=N$} \\
        0 &\text{otherwise}.
    \end{cases}
    \]
    Moreover, $H^1(G_\lambda)(g,n)=0$ for all pairs $(g,n)$ except possibly the following:
    \[
    (0,N+1), (0,N+2), (1,N-2), (1,N-1). %
    \]
    \item Furthermore, 
    \[
    H^0(G_{1^{N}})(g,n)=\begin{cases}
    V_{1^{N}} & \text{if $g=0$, $n=N$} \\
    V_{1^{N+1}} & \text{if $g=0$, $n=N+1$} \\
    0 & \text{otherwise}
    \end{cases}
    \]
    and
    \[
    H^1(G_{1^{N}})(g,n)
    =\begin{cases}
    V_{31^{N-2}} & \text{if $g=0$, $n=N+1$ and $N\geq 2$} \\
    V_{31^{N-1}} & \text{if $g=0$, $n=N+2$ and $N\geq 1$} \\
    0 & \text{otherwise.}
    \end{cases}
    \]
\end{enumerate}
\end{prop}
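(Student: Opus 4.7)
The plan is to analyze degrees 0 and 1 of $G_\lambda$ directly by enumerating the low-degree graph generators and interpreting the differentials as cooperadic cocompositions of the $\FA$-module $C_\lambda$. A degree 0 generator of $G_\lambda(g,n)$ has no internal edges, hence no black vertices, and consists of the special vertex $*$ carrying $n$ numbered legs with a decoration in $C_\lambda(n)$, which forces $g = 0$ and $n \geq N$. A degree 1 generator has exactly one internal edge, of two possible types: (i) an edge from $*$ to a genus-$0$ black vertex carrying $k \geq 2$ numbered legs (with $g = 0$, $k \leq n - 2$ for stability of the new $*$, and $k \leq n - N + 1$ for non-vanishing of its decoration), or (ii) a self-loop at $*$, giving $g = 1$ and requiring $n + 2 \geq N$. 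This enumeration shows immediately that $G^0_\lambda(g,n) = G^1_\lambda(g,n) = 0$ for all $(g,n)$ outside the finite list of pairs in the proposition, so $H^0$ and $H^1$ vanish there for trivial reasons.

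Next I would compute $H^0$. The differential $\delta^0: C_\lambda(n) \to G^1_\lambda(0,n)$ is the sum, over admissible subsets $S \subset \{1, \ldots, n\}$, of the $\FA$-module maps $\pi_S: C_\lambda(n) \to C_\lambda(n - |S| + 1)$ obtained by collapsing $S$ to a point, as described in Section~\ref{sec:FAsimple}, tensored with the corresponding graph. When $n = N$, no admissible $S$ exists, so $H^0(G_\lambda)(0, N) = C_\lambda(N) = V_\lambda$. For $n > N$, writing $C_\lambda(n) = \bigoplus_{|A| = N} V_\lambda^A$, the key computation is to determine $\bigcap_{|S| = 2} \ker \pi_S$. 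A direct analysis of the action of $\pi_{ij}$ on the $A$-summands shows that: for $\lambda = 1^N$ and $n = N + 1$, the intersection is one-dimensional and isomorphic to $V_{1^{N+1}}$, spanned by the sign-alternating sum over the $N + 1$ choices of $A$; for $\lambda \neq 1^N$, or for $\lambda = 1^N$ with $n > N + 1$, the intersection vanishes. The vanishing for $\lambda \neq 1^N$ should ultimately follow from simplicity of the $\FA$-module $C_\lambda$, since any nontrivial element in the intersection would generate a proper sub-$\FS$-module concentrated in arities $> N$, which is incompatible with $C_\lambda$ being simple and generated by $V_\lambda$ in arity $N$.

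For $H^1$ at non-exceptional pairs $(g,n)$ I would analyze both the image of $\delta^0$ and the kernel of $\delta^1: G^1_\lambda(g,n) \to G^2_\lambda(g,n)$. For $g = 0$ and $n \geq N + 3$, the degree 1 generators are indexed by subsets $S$ with decorations in $C_\lambda(n - |S| + 1)$, and combining the $\FA$-module structure across two consecutive splittings yields enough constraints to show that $\mathrm{im}(\delta^0) = \ker(\delta^1)$, forcing $H^1 = 0$. For $g = 1$ with $n \geq N$, the only degree 1 generators are self-loops at $*$ with decorations in $C_\lambda(n + 2)$, and a parallel analysis of $\delta^1$ via further splittings shows it is injective, giving $H^1 = \ker(\delta^1) = 0$ since $G^0_\lambda(1,n) = 0$. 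For part (2), the explicit identifications $H^1(G_{1^N})(0, N+1) = V_{31^{N-2}}$ and $H^1(G_{1^N})(0, N+2) = V_{31^{N-1}}$ arise by computing $\ker(\delta^1) / \mathrm{im}(\delta^0)$ in those specific arities using Young's rule applied to $C_{1^N}(n)$, noting that the Pieri expansions identify precisely one hook-shaped summand not accounted for by the image of $\delta^0$.

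The main obstacle is the representation-theoretic analysis of the maps $\pi_S$ and their iterations: specifically, the claim that $\bigcap_{|S| = 2} \ker \pi_S = 0$ in $C_\lambda(n)$ for $\lambda \neq 1^N$ and $n > N$, together with the analogous statements needed for $H^1$ vanishing at large arities. Making the simplicity argument for $C_\lambda$ precise, and extending the bookkeeping carefully to the iterated merges needed for $H^1$, will be the technical crux. For $\lambda = 1^N$, the kernel computation reduces to a classical fact about the sign representation and pairwise transpositions, but the general case requires leveraging structural features of the induced representation $C_\lambda$ beyond the pairwise-transposition analysis.
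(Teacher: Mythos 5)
Your overall strategy---enumerate the degree $0$ and $1$ generators and analyze the differential through the $\FA$-module structure maps---is the same in spirit as the paper's, but the execution has a genuine gap at exactly the hard cases. The claim that $G^0_\lambda(g,n)=G^1_\lambda(g,n)=0$ outside the four exceptional pairs is false: in the full complex the degree-$0$ part equals $C_\lambda(n)\neq 0$ for every $g=0$, $n\geq N$, and degree-$1$ generators exist for every $g=0$, $n\geq N+1$ (a black vertex carrying $2\leq k\leq n-N+1$ legs) and every $g=1$, $n\geq N-2$ (a self-loop with decoration in $C_\lambda(n+2)$). So the vanishing of $H^0$ for $g=0$, $n\geq N+2$ and of $H^1$ for $g=0$, $n\geq N+3$ and $g=1$, $n\geq N$ is not ``for trivial reasons''; it is the bulk of the proposition, and your treatment of it is by assertion (``combining the $\FA$-module structure across two consecutive splittings yields enough constraints,'' ``a parallel analysis shows it is injective''). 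The $g=1$ assertion is the most delicate: the very same self-loop differential is \emph{not} injective at $(1,N-2)$ and $(1,N-1)$---these are the genuine exceptions---and at $(1,N)$ the paper must first pin down the closed elements by reusing the arity-$(N+1)$ kernel computation and then exclude them by solving an explicit linear system; nothing in your sketch distinguishes $n\geq N$ from $n=N-1$. For $g=0$ and large $n$ the operative mechanism is an explicit one-sided inverse of the differential (contract the internal edge of the split-off trivalent vertex), and identifying such a mechanism is the actual content. The paper sidesteps most of this bookkeeping by first passing to the quasi-isomorphic subcomplex $G_\lambda^\star$ of \cite{PayneWillwacher21}, in which the degree $0$ and $1$ parts genuinely reduce to short finite lists of graphs; working in the full complex as you propose, the case analysis is substantially heavier and you have not shown that it closes.

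Two further points. Your argument that $\bigcap_{|S|=2}\ker\pi_S=0$ in $C_\lambda(n)$ for $n>N$ invokes simplicity of $C_\lambda$ but speaks of a sub-$\FS$-module; simplicity is an $\FA$-module property, and a sub-$\FS$-module need not be an $\FA$-submodule. The argument can be repaired: an element killed by all elementary collapses generates an $\FA$-submodule vanishing in all arities $<n$ (any map to a smaller set factors through a non-bijective surjection), and Lemma \ref{lem:FA inj} then forces it to vanish. But for $\lambda=1^N$ and $n\geq N+2$ this repair is unavailable ($C_{1^N}$ is not simple), and the kernel computation there is not the ``classical fact about the sign representation'' you cite---it is a statement about the induced module $\Ind_{\ss_N\times\ss_{n-N}}^{\ss_n}(V_{1^N}\boxtimes\mathbf 1)$ that needs its own argument. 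Likewise, the identifications $H^1(G_{1^N})(0,N+1)=V_{31^{N-2}}$ and $H^1(G_{1^N})(0,N+2)=V_{31^{N-1}}$ require knowing not only the Pieri decompositions of source and target but also exactly which isotypic components the differential hits; the paper settles this by writing the differential out and checking it is nonzero on the $V_{21^{N-1}}$ (respectively $V_{41^{N-2}}$) constituents, a step your outline leaves open.
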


\begin{proof}
    To simplify the computation we use a trick of \cite{PayneWillwacher21}.
    In the blown-up picture, we call a blown-up component an $\epsilon$-component if it has no $\omega$-legs, and we call it an $\omega$-component otherwise. Let $G_\lambda^\star\subset G_\lambda$ denote the subcomplex spanned by graphs with no vertices in $\epsilon$-components, and at most one numbered leg in the union of the  $\epsilon$-components. Then the proof of \cite[Proposition 6.4]{PayneWillwacher21} shows that the inclusion $G_\lambda^\star\subset G_\lambda$ is a quasi-isomorphism. There are only a few graphs in $G_\lambda^\star$ of degrees $0$ and $1$, which we can easily list.
    
    To aid our computation let us also replace the representation $\lambda$ by the non-irreducible representation $\rho=1\hboxtimes \cdots \hboxtimes 1$, which contains all irreducible representations of $\ss_{N}$. Then the additional decoration in the representation that is normally not shown in the blown-up graph (see Section \ref{sec:blown up}) can be conveniently encoded combinatorially by numbering all the $\omega$-legs.
    Therefore, the graphs in $G_\rho$ have the $\omega$-legs labeled by, say, $\omega_1,\dots,\omega_{N}$. An additional decoration is then not necessary.
    Also note that $G_\rho(n)$ carries a representation of $\ss_n\times \ss_{N}$, with $\ss_n$ permuting the labels on the numbered legs as usual and $\ss_{N}$ permuting the labels on the $\omega$-legs. We may recover $G_\lambda$ as $G_\lambda=G_\rho \otimes_{\ss_{N}} V_\lambda$.
    
    We compute the degree $0$ and degree $1$ cohomology of $G_\lambda^\star(g,n)$ for each pair $g,n$ separately. For the pairs we omit, there are no graphs, so the cohomology is trivially zero.

\smallskip

    \underline{Case $(g,n)=(0,N)$:} There is only one contributing kind of graph
    \[
    \begin{tikzpicture}
    \node (v1) at (0,-.4) {$\omega_1$};
    \node (v2) at (0,.4) {$i_1$};
    \draw (v1) edge (v2);
    \end{tikzpicture}
    \cdots
    \begin{tikzpicture}
    \node (v1) at (0,-.4) {$\omega_{N}$};
    \node (v2) at (0,.4) {$i_{N}$};
    \draw (v1) edge (v2);
    \end{tikzpicture}
    \]
    and this contributes one copy of the representation $\rho$ in degree 0.

    \smallskip
    
    \underline{Case $(g,n)=(0,N+1)$:} Here the complex $G_\rho^\star(g,n)$ is spanned in degrees zero and one by two types of graphs 
    \begin{align*}
    A_{j;i_1,\dots,i_{N}} &:= \quad 
    \begin{tikzpicture}
    \node (v1) at (0,-.4) {$\omega_1$};
    \node (v2) at (0,.4) {$i_1$};
    \draw (v1) edge (v2);
    \end{tikzpicture}
    \cdots
    \begin{tikzpicture}
    \node (v1) at (0,-.4) {$\omega_{N}$};
    \node (v2) at (0,.4) {$i_{N}$};
    \draw (v1) edge (v2);
    \end{tikzpicture}
    \begin{tikzpicture}
    \node (v1) at (0,-.4) {$\epsilon$};
    \node (v2) at (0,.4) {$j$};
    \draw (v1) edge (v2);
    \end{tikzpicture}
    & \text{(in degree $0$)}
    \\
    B^k_{i,j;i_1,\dots,i_{N-1}} &:= \quad 
    \begin{tikzpicture}
    \node (v1) at (0,-.4) {$\omega_1$};
    \node (v2) at (0,.4) {$i_1$};
    \draw (v1) edge (v2);
    \end{tikzpicture}
    \cdots
    \begin{tikzpicture}
    \node (v1) at (0,-.4) {$\omega_{N}$};
    \node (v2) at (0,.4) {$i_{N-1}$};
    \draw (v1) edge (v2);
    \end{tikzpicture}
    \begin{tikzpicture}
    \node[int] (v1) at (0,.4) {};
    \node (v2) at (-.4,-.4) {$\omega_{k}$};
    \node (v3) at (0,-.4) {$i$};
    \node (v4) at (0.4,-.4) {$j$};
    \draw (v1) edge (v2) edge (v3) edge (v4);
    \end{tikzpicture} & \text{(in degree $1$)}
    \end{align*}
     with differential 
    \begin{align*}
    dA_{j;i_1,\dots,i_{N}} &=\sum_{\alpha=1}^{N} 
    B^\alpha_{j,i_\alpha;i_1,\dots,\hat \imath_\alpha,\dots ,i_{N}}
    &
    d B^k_{i,j;i_1,\dots,i_{N-1}}&= 0.
    \end{align*}
    To compute the degree $0$ cohomology consider the linear map $h_\beta$ such that 
    \[
    h_\beta(B^k_{i,j;i_1,\dots ,i_{N-1}})
    =
    \begin{cases}
        A_{j;i_1,\dots,i,\dots,i_{N-1}} & \text{if $k=\beta$} \\
        0 & \text{otherwise,}
    \end{cases}
    \]
    with the $i$ inserted at position $k=\beta$, for $i < j$.
    Then 
    \[
    h_\beta d(A_{j;i_1,\dots,i_{N}})
    =
    \begin{cases}
        A_{j;i_1,\dots,i_{N}} & \text{if $j>i_\beta$} \\
        A_{i_\beta;i_1,\dots, j,\dots,i_{N}} & \text{if $j<i_\beta$.}
    \end{cases}
    \]
    Now suppose that 
    \[
    x = \sum c_{j;i_1,\dots,i_{N}}A_{j;i_1,\dots,i_{N}}
    \]
    is in the kernel of $d$, i.e., $dx=0$. Then $h_\beta dx=0$ from which we learn that 
    \[
    c_{j;i_1,\dots,i_{N}}=-c_{i_\beta;i_1,\dots, j,\dots,i_{N}}.
    \]
    The only solution to this linear system is the fully antisymmetric tensor, i.e., possibly up to a multiplicative constant,
    \[
    x= \sum_{\sigma\in \ss_{N+1}}(-1)^\sigma 
    A_{\sigma(1);\sigma(2)\cdots \sigma(N+1)}.
    \]
    It follows that $H^0(G_{1^{N}})(0,N+1)=V_{1^{N+1}}$ and  $H^0(G_{\lambda})(0,N+1)=0$ for $\lambda$ with at least two columns. The cokernel of the differential $d$ is non-trivial and generally produces non-zero cohomology in degree 1 and $(g,n)=(0,N+1)$.
    Specifically, let us consider the case $\lambda=1^{N}$, corresponding to antisymmetrizing over the $\omega_1,\dots,\omega_{N}$ in our graphs.
    In this isotypical component the $A$-classes above span the $\ss_{N+1}$-representation $V_{1^{N+1}}\oplus V_{21^{N-1}}$, as can be seen from Pieri's rule. Similarly, the $B$-classes span the representation $ V_{21^{N-1}}\oplus V_{31^{N-2}}$. Since this differential is nonzero, we readily conclude that $H^1(G_{1^{N}})(0,N+1) \cong V_{31^{N-2}}$.

\smallskip

    \underline{Case $g=0$, $n\geq N+2$:}
    There are no degree 0 graphs, and two types of degree 1 graphs, with differential
    \begin{align}
    \label{equ:n02 first}
    \begin{tikzpicture}
    \node (v1) at (0,-.4) {$\omega_1$};
    \node (v2) at (0,.4) {$i_1$};
    \draw (v1) edge (v2);
    \end{tikzpicture}
    \cdots
    \begin{tikzpicture}
    \node (v1) at (0,-.4) {$\omega_{N}$};
    \node (v2) at (0,.4) {$i_{N}$};
    \draw (v1) edge (v2);
    \end{tikzpicture}
    \begin{tikzpicture}
    \node[int] (v1) at (0.2,.4) {};
    \node (v2) at (-.4,-.4) {$\omega_\alpha$};
    \node (v3) at (0,-.4) {$j_1$};
    \node (v4) at (0.4,-.4) {$\cdots$};
    \node (v5) at (0.8,-.4) {$j_r$};
    \draw (v1) edge (v2) edge (v3) edge (v4) edge (v5);
    \end{tikzpicture}
    &\mapsto 
    \sum
    \begin{tikzpicture}
    \node (v1) at (0,-.4) {$\omega_1$};
    \node (v2) at (0,.4) {$i_1$};
    \draw (v1) edge (v2);
    \end{tikzpicture}
    \cdots
    \begin{tikzpicture}
    \node (v1) at (0,-.4) {$\omega_{N}$};
    \node (v2) at (0,.4) {$i_{N}$};
    \draw (v1) edge (v2);
    \end{tikzpicture}
    \begin{tikzpicture}
    \node[int] (v1) at (0.2,.4) {};
    \node[int] (v1b) at (0.7,.4) {};
    \node (v2) at (-.4,-.4) {$\omega_\alpha$};
    \node (v3) at (0,-.4) {$j_{\bullet}$};
    \node (v4) at (0.9,-.4) {$\cdots$};
    \node (v5) at (1.3,-.4) {$j_{\bullet}$};
    \draw (v1) edge (v1b) edge (v2) edge (v3) 
    (v1b) edge (v4) edge (v5);
    \end{tikzpicture}
    \\
    \begin{tikzpicture}
    \node (v1) at (0,-.4) {$\omega_{1}$};
    \node (v2) at (0,.4) {$i_1$};
    \draw (v1) edge (v2);
    \end{tikzpicture}
    \cdots
    \begin{tikzpicture}
    \node (v1) at (0,-.4) {$\omega_{N}$};
    \node (v2) at (0,.4) {$i_{N}$};
    \draw (v1) edge (v2);
    \end{tikzpicture}
    \begin{tikzpicture}
    \node[int] (v1) at (0.2,.4) {};
    \node (v2) at (-.4,-.4) {$\omega_\alpha$};
    \node (v3) at (0,-.4) {$j_1$};
    \node (v4) at (0.4,-.4) {$\cdots$};
    \node (v5) at (0.8,-.4) {$j_r$};
    \draw (v1) edge (v2) edge (v3) edge (v4) edge (v5);
    \end{tikzpicture}
    \begin{tikzpicture}
    \node (v1) at (0,-.4) {$\epsilon$};
    \node (v2) at (0,.4) {$k$};
    \draw (v1) edge (v2);
    \end{tikzpicture}
    &\mapsto 
    \sum
    \begin{tikzpicture}
    \node (v1) at (0,-.4) {$\omega_1$};
    \node (v2) at (0,.4) {$i_1$};
    \draw (v1) edge (v2);
    \end{tikzpicture}
    \cdots
    \begin{tikzpicture}
    \node (v1) at (0,-.4) {$\omega_{N}$};
    \node (v2) at (0,.4) {$i_{N}$};
    \draw (v1) edge (v2);
    \end{tikzpicture}
    \begin{tikzpicture}
    \node[int] (v1) at (0.2,.4) {};
    \node[int] (v1b) at (0.7,.4) {};
    \node (v2) at (-.4,-.4) {$\omega_\alpha$};
    \node (v3) at (0,-.4) {$j_{\bullet}$};
    \node (v4) at (0.9,-.4) {$\cdots$};
    \node (v5) at (1.3,-.4) {$j_{\bullet}$};
    \draw (v1) edge (v1b) edge (v2) edge (v3) 
    (v1b) edge (v4) edge (v5);
    \end{tikzpicture}
    \begin{tikzpicture}
    \node (v1) at (0,-.4) {$\epsilon$};
    \node (v2) at (0,.4) {$k$};
    \draw (v1) edge (v2);
    \end{tikzpicture}
    +(\cdots).
    \label{equ:n02second}
    \end{align}
    where $(\cdots)$ are graphs without an $\epsilon$-leg.
    The (first) terms on the right are present for $r\geq 3$, and the sum is over partitions of the set $\{j_1,\dots,j_r\}$.
    These terms lead to the differential being injective since contracting the internal (horizontal) edge and dividing by the number of partitions is obviously a one-sided inverse.
    Hence the only degree 1 cohomology that can possibly remain for $g=0$ is in $n=N+2$, arising from the graph for $r=2$ on the left hand side of \eqref{equ:n02second}.
    Let us also compute this cohomology explicitly for the antisymmetric isotypical component of the representation of $\ss_{N}$ by permuting the $\omega$'s. In this case the remaining terms of the differential of the second generator \eqref{equ:n02second} become
    \begin{equation}\label{equ:n02 3}
    \begin{tikzpicture}
    \node (v1) at (0,-.4) {$\omega$};
    \node (v2) at (0,.4) {$i_1$};
    \draw (v1) edge (v2);
    \end{tikzpicture}
    \cdots
    \begin{tikzpicture}
    \node (v1) at (0,-.4) {$\omega$};
    \node (v2) at (0,.4) {$i_{N}$};
    \draw (v1) edge (v2);
    \end{tikzpicture}
    \begin{tikzpicture}
    \node[int] (v1) at (0,.4) {};
    \node (v2) at (-.4,-.4) {$\omega$};
    \node (v3) at (0,-.4) {$j_1$};
    \node (v4) at (0.4,-.4) {$j_2$};
    \draw (v1) edge (v2) edge (v3) edge (v4);
    \end{tikzpicture}
    \begin{tikzpicture}
    \node (v1) at (0,-.4) {$\epsilon$};
    \node (v2) at (0,.4) {$k$};
    \draw (v1) edge (v2);
    \end{tikzpicture}
    \mapsto 
    \begin{tikzpicture}
    \node (v1) at (0,-.4) {$\omega$};
    \node (v2) at (0,.4) {$i_1$};
    \draw (v1) edge (v2);
    \end{tikzpicture}
    \cdots
    \begin{tikzpicture}
    \node (v1) at (0,-.4) {$\omega$};
    \node (v2) at (0,.4) {$i_{N}$};
    \draw (v1) edge (v2);
    \end{tikzpicture}
    \begin{tikzpicture}
    \node[int] (v1) at (0.2,.4) {};
    \node[int] (v1b) at (0.7,.4) {};
    \node (v2) at (-.4,-.4) {$\omega$};
    \node (v3) at (0,-.4) {$k$};
    \node (v4) at (0.9,-.4) {$i$};
    \node (v5) at (1.3,-.4) {$j$};
    \draw (v1) edge (v1b) edge (v2) edge (v3) 
    (v1b) edge (v4) edge (v5);
    \end{tikzpicture}
    +
    \sum_{\beta}
    \pm
    \begin{tikzpicture}
    \node (v1) at (0,-.4) {$\omega$};
    \node (v2) at (0,.4) {$i_1$};
    \draw (v1) edge (v2);
    \end{tikzpicture}
    \cdots
    \begin{tikzpicture}
    \node (v1) at (0,-.4) {$\omega$};
    \node (v2) at (0,.4) {$i_{N}$};
    \draw (v1) edge (v2);
    \end{tikzpicture}
    \begin{tikzpicture}
    \node[int] (v1) at (0,.4) {};
    \node (v2) at (-.4,-.4) {$\omega$};
    \node (v3) at (0,-.4) {$i_\beta$};
    \node (v4) at (0.4,-.4) {$k$};
    \draw (v1) edge (v2) edge (v3) edge (v4);
    \end{tikzpicture}
    \begin{tikzpicture}
    \node[int] (v1) at (0,.4) {};
    \node (v2) at (-.4,-.4) {$\omega$};
    \node (v3) at (0,-.4) {$j_1$};
    \node (v4) at (0.4,-.4) {$j_2$};
    \draw (v1) edge (v2) edge (v3) edge (v4);
    \end{tikzpicture}
    .
    \end{equation}
    The composition with the projection onto the first term on the right-hand side is already a bijection. 
    However, note that the same graphs appear as the differential of the first generator \eqref{equ:n02 first}, so that a linear combination of both can be made to have no such graph in the differential. Concretely, such combinations span an $\ss_{N+2}$-module $V_{41^{N-2}}\oplus V_{31^{N-1}}$. The collection of graphs of the form appearing inside the sum in \eqref{equ:n02 3} spans a representation 
    \[
    V_{1^{N-2}}\boxtimes \ss_\boxtimes^2( V_2) 
    = V_{1^{N-2}}\boxtimes (V_4 \oplus V_{2^2})
    =
    V_{51^{N-3}}\oplus V_{41^{N-2}}\oplus V_{3^2 1^{N-4}}
    \oplus V_{3 2 1^{N-3}}\oplus V_{2^2 1^{N-2}}.
    \]
    Since the second piece of the differential in \eqref{equ:n02 3} is non-zero we can readily conclude that the $V_{41^{N-2}}$-irreps are sent isomorphically onto each other by the differential so that 
    \[
    H^1(V_{1^{N}})(0,N+2) \cong V_{31^{N-1}}.
    \]

\smallskip

    \underline{Case $(g,n)=(1,N-2)$:} Here there is one contributing type of graph of degree 1:
    \[
    \begin{tikzpicture}
    \node (v1) at (0,-.4) {$\omega_1$};
    \node (v2) at (0,.4) {$i_1$};
    \draw (v1) edge (v2);
    \end{tikzpicture}
    \cdots
    \begin{tikzpicture}
    \node (v1) at (0,-.4) {$\omega_{N}$};
    \node (v2) at (0,.4) {$i_{N-2}$};
    \draw (v1) edge (v2);
    \end{tikzpicture}
    \begin{tikzpicture}
    \node (v1) at (0,-.4) {$\omega_\alpha$};
    \node (v2) at (0,.4) {$\omega_{\beta}$};
    \draw (v1) edge (v2);
    \end{tikzpicture}.
    \]
    The differential vanishes and we generally obtain a contribution to the degree 1 cohomology. Note however that there is no $1^{N}$-isotypical component, due to the symmetry interchanging $\omega_{\alpha}$ and $\omega_{\beta}$. Hence $H^1(G_{1^{N}})(1,N-2)=0$.

    \smallskip
    
    \underline{Case $(g,n)=(1,N-1)$:}
    We have two types of degree 1 graphs, with differentials as shown:
    \begin{align*}
    \begin{tikzpicture}
    \node (v1) at (0,-.4) {$\omega_1$};
    \node (v2) at (0,.4) {$i_1$};
    \draw (v1) edge (v2);
    \end{tikzpicture}
    \cdots
    \begin{tikzpicture}
    \node (v1) at (0,-.4) {$\omega_{N}$};
    \node (v2) at (0,.4) {$i_{N-1}$};
    \draw (v1) edge (v2);
    \end{tikzpicture}
    \begin{tikzpicture}
    \node (v1) at (0,-.4) {$\omega_\alpha$};
    \node (v2) at (0,.4) {$\epsilon$};
    \draw (v1) edge (v2);
    \end{tikzpicture}
    &\mapsto
    \sum_\beta 
    \begin{tikzpicture}
    \node (v1) at (0,-.4) {$\omega_1$};
    \node (v2) at (0,.4) {$i_1$};
    \draw (v1) edge (v2);
    \end{tikzpicture}
    \cdots
    \begin{tikzpicture}
    \node (v1) at (0,-.4) {$\omega_{N}$};
    \node (v2) at (0,.4) {$i_{N-1}$};
    \draw (v1) edge (v2);
    \end{tikzpicture}
    \begin{tikzpicture}
    \node (v1) at (-.5,-.4) {$\omega_\alpha$};
    \node (v2) at (0,-.4) {$\omega_\beta$};
    \node (v3) at (0.5,-.4) {$i_\beta$};
    \node[int] (i) at (0,.4) {};
    \draw (i) edge (v1) edge (v2) edge (v3);
    \end{tikzpicture}
    \\
    \begin{tikzpicture}
    \node (v1) at (0,-.4) {$\omega_1$};
    \node (v2) at (0,.4) {$i_1$};
    \draw (v1) edge (v2);
    \end{tikzpicture}
    \cdots
    \begin{tikzpicture}
    \node (v1) at (0,-.4) {$\omega_{N}$};
    \node (v2) at (0,.4) {$i_{N-1}$};
    \draw (v1) edge (v2);
    \end{tikzpicture}
    \begin{tikzpicture}
    \node (v1) at (0,-.4) {$\omega_\alpha$};
    \node (v2) at (0,.4) {$\omega_{\beta}$};
    \draw (v1) edge (v2);
    \end{tikzpicture}
    \begin{tikzpicture}
    \node (v1) at (0,-.4) {$\epsilon$};
    \node (v2) at (0,.4) {$j$};
    \draw (v1) edge (v2);
    \end{tikzpicture}
    &\mapsto
    \sum_\gamma
    \begin{tikzpicture}
    \node (v1) at (0,-.4) {$\omega_1$};
    \node (v2) at (0,.4) {$i_1$};
    \draw (v1) edge (v2);
    \end{tikzpicture}
    \cdots
    \begin{tikzpicture}
    \node (v1) at (0,-.4) {$\omega_{N}$};
    \node (v2) at (0,.4) {$i_{N-1}$};
    \draw (v1) edge (v2);
    \end{tikzpicture}
    \begin{tikzpicture}
    \node (v1) at (-.5,-.4) {$\omega_\gamma$};
    \node (v2) at (0,-.4) {$i_\gamma$};
    \node (v3) at (0.5,-.4) {$j$};
    \node[int] (i) at (0,.4) {};
    \draw (i) edge (v1) edge (v2) edge (v3);
    \end{tikzpicture}
    \begin{tikzpicture}
    \node (v1) at (0,-.4) {$\omega_\alpha$};
    \node (v2) at (0,.4) {$\omega_{\beta}$};
    \draw (v1) edge (v2);
    \end{tikzpicture}.
    \end{align*}
    In general, these may produce non-trivial cohomology in $H^1(G_\lambda)(1,N-1)$. However, consider the special case $\lambda=1^{N}$, corresponding to antisymmetrizing over the $\omega_1,\dots,\omega_{N}$ in the pictures above. For this isotypical component the first map is an injection and the second generator vanishes by symmetry, hence $H^1(G_{1^{N}})(1,N-1)=0$.

    \smallskip

    \underline{Case $(g,n)=(1,N)$:}
    We have two types of degree 1 graphs, with differentials as shown:
    \begin{align*}
    \begin{tikzpicture}
    \node (v1) at (0,-.4) {$\omega_1$};
    \node (v2) at (0,.4) {$i_1$};
    \draw (v1) edge (v2);
    \end{tikzpicture}
    \cdots
    \begin{tikzpicture}
    \node (v1) at (0,-.4) {$\omega_{N}$};
    \node (v2) at (0,.4) {$i_{N}$};
    \draw (v1) edge (v2);
    \end{tikzpicture}
    \begin{tikzpicture}
    \node (v1) at (0,-.4) {$\epsilon$};
    \node (v2) at (0,.4) {$\epsilon$};
    \draw (v1) edge (v2);
    \end{tikzpicture}
    &\mapsto 
    \sum_\alpha
    \begin{tikzpicture}
    \node (v1) at (0,-.4) {$\omega_1$};
    \node (v2) at (0,.4) {$i_1$};
    \draw (v1) edge (v2);
    \end{tikzpicture}
    \cdots
    \begin{tikzpicture}
    \node (v1) at (0,-.4) {$\omega_{N}$};
    \node (v2) at (0,.4) {$i_{N}$};
    \draw (v1) edge (v2);
    \end{tikzpicture}
    \begin{tikzpicture}
    \node (v1) at (-.4,-.4) {$\omega_\alpha$};
    \node (v2) at (0,-.4) {$\epsilon$};
    \node (v3) at (0.4,-.4) {$i_\alpha$};
    \node[int] (i) at (0,.4) {};
    \draw (i) edge (v1) edge (v2) edge (v3);
    \end{tikzpicture}
    \\
    \begin{tikzpicture}
    \node (v1) at (0,-.4) {$\omega_1$};
    \node (v2) at (0,.4) {$i_1$};
    \draw (v1) edge (v2);
    \end{tikzpicture}
    \cdots
    \begin{tikzpicture}
    \node (v1) at (0,-.4) {$\omega_{N}$};
    \node (v2) at (0,.4) {$i_{N-1}$};
    \draw (v1) edge (v2);
    \end{tikzpicture}
    \begin{tikzpicture}
    \node (v1) at (0,-.4) {$\omega_\alpha$};
    \node (v2) at (0,.4) {$\epsilon$};
    \draw (v1) edge (v2);
    \end{tikzpicture}
    \begin{tikzpicture}
    \node (v1) at (0,-.4) {$\epsilon$};
    \node (v2) at (0,.4) {$j$};
    \draw (v1) edge (v2);
    \end{tikzpicture}
    &\mapsto
    \sum_\beta
    \begin{tikzpicture}
    \node (v1) at (0,-.4) {$\omega_1$};
    \node (v2) at (0,.4) {$i_1$};
    \draw (v1) edge (v2);
    \end{tikzpicture}
    \cdots
    \begin{tikzpicture}
    \node (v1) at (0,-.4) {$\omega_{N}$};
    \node (v2) at (0,.4) {$i_{N}$};
    \draw (v1) edge (v2);
    \end{tikzpicture}
    \begin{tikzpicture}
    \node (v1) at (0,-.4) {$\omega_\alpha$};
    \node (v2) at (0,.4) {$\epsilon$};
    \draw (v1) edge (v2);
    \end{tikzpicture}
    \begin{tikzpicture}
    \node (v1) at (-.4,-.4) {$\omega_\beta$};
    \node (v2) at (0,-.4) {$j$};
    \node (v3) at (0.4,-.4) {$i_\beta$};
    \node[int] (i) at (0,.4) {};
    \draw (i) edge (v1) edge (v2) edge (v3);
    \end{tikzpicture}
    +(\cdots)
    \end{align*}
    Here $(\cdots)$ denotes graphs not containing an $\epsilon-\omega$-edge. Clearly the differential on the first kind of graph is injective. 
    For the second kind of graph we may re-use the argument of Case $(0,N+1)$ above. This then shows that every closed element has the form 
    \[
    \sum_\alpha c_\alpha 
    \sum_{\sigma\in \ss_{N}}
    (-1)^\sigma
    \begin{tikzpicture}
    \node (v1) at (0,-.4) {$\omega_1$};
    \node (v2) at (0,.4) {$i_{\sigma(1)}$};
    \draw (v1) edge (v2);
    \end{tikzpicture}
    \cdots
    \begin{tikzpicture}
    \node (v1) at (0,-.4) {$\omega_{N}$};
    \node (v2) at (0,.4) {$i_{\sigma(N-1)}$};
    \draw (v1) edge (v2);
    \end{tikzpicture}
    \begin{tikzpicture}
    \node (v1) at (0,-.4) {$\omega_\alpha$};
    \node (v2) at (0,.4) {$\epsilon$};
    \draw (v1) edge (v2);
    \end{tikzpicture}
    \begin{tikzpicture}
    \node (v1) at (0,-.4) {$\epsilon$};
    \node (v2) at (0,.4) {$i_{\sigma(N)}$};
    \draw (v1) edge (v2);
    \end{tikzpicture}.
    \]
    Applying the differential produces (from the $(\cdots)$-terms before)
    \[
    \sum_\alpha c_\alpha 
    \sum_\beta
    \sum_{\sigma\in \ss_{N}}
    (-1)^\sigma
    \begin{tikzpicture}
    \node (v1) at (0,-.4) {$\omega_1$};
    \node (v2) at (0,.4) {$i_{\sigma(1)}$};
    \draw (v1) edge (v2);
    \end{tikzpicture}
    \cdots
    \begin{tikzpicture}
    \node (v1) at (0,-.4) {$\omega_{N}$};
    \node (v2) at (0,.4) {$i_{\sigma(N-1)}$};
    \draw (v1) edge (v2);
    \end{tikzpicture}
    \begin{tikzpicture}
    \node (v1) at (-.4,-.4) {$\omega_\alpha$};
    \node (v2) at (0.4,-.4) {$\omega_\beta$};
    \node (v3) at (0,1) {$i_{\sigma(n_\beta-1)}$};
    \node[int] (i) at (0,.2) {};
    \draw (i) edge (v1) edge (v2) edge (v3);
    \end{tikzpicture}
    \begin{tikzpicture}
    \node (v1) at (0,-.4) {$\epsilon$};
    \node (v2) at (0,.4) {$i_{\sigma(N)}$};
    \draw (v1) edge (v2);
    \end{tikzpicture}.
    \]
    The coefficient of every graph must be zero, and this leads to $(-1)^\alpha c_\alpha= -(-1)^\beta c_\beta$. This linear system has no nonzero solutions, so that we have $H^1(G_\lambda)(1,N)=0$.

    \smallskip
    
    \underline{Case $(g,n)=(1,N+1)$:}
    We have one type of degree 1 graph with the following differential:
    \begin{align*}
    \begin{tikzpicture}
    \node (v1) at (0,-.4) {$\omega_1$};
    \node (v2) at (0,.4) {$i_1$};
    \draw (v1) edge (v2);
    \end{tikzpicture}
    \cdots
    \begin{tikzpicture}
    \node (v1) at (0,-.4) {$\omega_{N}$};
    \node (v2) at (0,.4) {$i_{N+1}$};
    \draw (v1) edge (v2);
    \end{tikzpicture}
    \begin{tikzpicture}
    \node (v1) at (0,-.4) {$\epsilon$};
    \node (v2) at (0,.4) {$\epsilon$};
    \draw (v1) edge (v2);
    \end{tikzpicture}
    \begin{tikzpicture}
    \node (v1) at (0,-.4) {$\epsilon$};
    \node (v2) at (0,.4) {$j$};
    \draw (v1) edge (v2);
    \end{tikzpicture}
        &\mapsto
        2
        \sum_\alpha
        \begin{tikzpicture}
    \node (v1) at (0,-.4) {$\omega_1$};
    \node (v2) at (0,.4) {$i_1$};
    \draw (v1) edge (v2);
    \end{tikzpicture}
    \cdots
    \begin{tikzpicture}
    \node (v1) at (0,-.4) {$\omega_{N}$};
    \node (v2) at (0,.4) {$i_{N+1}$};
    \draw (v1) edge (v2);
    \end{tikzpicture}
    \begin{tikzpicture}
    \node (v1) at (-.4,-.4) {$\omega_\alpha$};
    \node (v2) at (0,-.4) {$\epsilon$};
    \node (v3) at (0.4,-.4) {$i_\alpha$};
    \node[int] (i) at (0,.4) {};
    \draw (i) edge (v1) edge (v2) edge (v3);
    \end{tikzpicture}
    \begin{tikzpicture}
    \node (v1) at (0,-.4) {$\epsilon$};
    \node (v2) at (0,.4) {$j$};
    \draw (v1) edge (v2);
    \end{tikzpicture}
    +(\cdots).
    \end{align*}
    Here $(\cdots)$ are linear combinations of other graphs. 
    The first term already makes the differential injective, and there is no degree 1 cohomology.

    There are no graphs of degrees 0 and 1 for $g\geq 2$ or for $g=1$ and $n> N+1$. Hence collecting the various non-trivial pieces of cohomology we have found the proposition follows.
\end{proof}

\begin{cor}
    We have that
    \begin{align*}
        H^0(G_{\tilde 1^{N}})(g,n)&=
        \begin{cases}
            V_{1^{N}} & \text{for $g=0$, $n={N}$} \\
            0 & \text{otherwise}
        \end{cases}
    \end{align*}
    and
        \begin{align*}
        H^1(G_{\tilde 1^{N}})(g,n)&=
        \begin{cases}
            V_{31^{N-2}} & \text{for $g=0$, $n={N+1}$, and $N\geq 2$} \\
            0 & \text{otherwise.}
        \end{cases}
    \end{align*}
\end{cor}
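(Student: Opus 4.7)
The plan is to induct on $N$, deducing the cohomology of $G_{\tilde 1^N}$ from Proposition \ref{prop:lowdegree}(2) together with the inductive hypothesis. The starting point is the short exact sequence of $\FA$-modules
\[
0 \to \tilde C_{1^N} \to C_{1^{N-1}} \to \tilde C_{1^{N-1}} \to 0,
\]
which follows from the exactness of \eqref{equ:C long exact} upon identifying $\tilde C_{1^N}$ with the image of $C_{1^N}\to C_{1^{N-1}}$ (equivalently, the kernel of $C_{1^{N-1}}\to C_{1^{N-2}}$). Applying the exact functor $M\mapsto G_M(g,n)$ from Remark \ref{rem:GM exact} produces a short exact sequence of complexes, yielding a long exact sequence relating $H^\bullet(G_{\tilde 1^N})(g,n)$, $H^\bullet(G_{1^{N-1}})(g,n)$, and $H^\bullet(G_{\tilde 1^{N-1}})(g,n)$. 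The base case $N=0$ is immediate: $\tilde C_{1^0}$ is supported only at arity $0$, so $G_{\tilde 1^0}$ reduces to a single degree-$0$ generator at $(g,n)=(0,0)$.

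For the inductive step, I would sweep through the pairs $(g,n)$ using this long exact sequence. Proposition \ref{prop:lowdegree}(2) confines $H^0(G_{1^{N-1}})$ to be nonzero only at $(0,N-1)$ and $(0,N)$, and $H^1(G_{1^{N-1}})$ to be nonzero only at $(0,N)$ and $(0,N+1)$; the inductive hypothesis similarly controls $H^\bullet(G_{\tilde 1^{N-1}})$. The LES then delivers $H^0(G_{\tilde 1^N})(0,N) = V_{1^N}$ and $H^1(G_{\tilde 1^N})(0,N+1) = V_{3,1^{N-2}}$ and gives vanishing everywhere else---provided one first shows that the connecting homomorphism out of $H^0(G_{\tilde 1^{N-1}})(0,N-1)$ vanishes, which reduces to surjectivity of $H^0(G_{1^{N-1}})(0,N-1)\to H^0(G_{\tilde 1^{N-1}})(0,N-1)$.

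The main obstacle is verifying the two isomorphism statements underlying this analysis: that $H^0(G_{1^{N-1}})(0,N-1)\to H^0(G_{\tilde 1^{N-1}})(0,N-1)$ is an isomorphism (needed to force $H^0(G_{\tilde 1^N})(0,N-1)=0$ and to kill the connecting map), and that $H^1(G_{1^{N-1}})(0,N)\to H^1(G_{\tilde 1^{N-1}})(0,N)$ is an isomorphism (needed to force $H^1(G_{\tilde 1^N})(0,N)=0$). I expect both to follow from the same mechanism: inspection of the explicit generators in the proof of Proposition \ref{prop:lowdegree}(2) shows that the relevant cohomology classes in $G_{1^{N-1}}$ are represented by graphs whose special vertex has valence exactly $N-1$, and in that arity the quotient $C_{1^{N-1}}(N-1)\to \tilde C_{1^{N-1}}(N-1)$ is the identity on $V_{1^{N-1}}$ (since $\tilde C_{1^N}(r)=0$ for $r<N$). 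Once these isomorphisms are in hand, the LES completes the argument in every remaining case.
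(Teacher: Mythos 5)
Your argument is correct, and it is essentially the paper's computation repackaged: the paper resolves $\tilde C_{1^N}$ by the full complex $C_{1^{N-1}}\to C_{1^{N-2}}\to\cdots\to C_{1^0}$ from \eqref{equ:tilde C fin res} and runs the resulting spectral sequence whose $E_1$-page is $H^\bullet(G_{1^{N-1}})\to H^\bullet(G_{1^{N-2}})\to\cdots$, whereas you truncate to the two-step short exact sequence $0\to\tilde C_{1^N}\to C_{1^{N-1}}\to\tilde C_{1^{N-1}}\to 0$ and induct on $N$. Since the resolution differential $C_{1^{N-1}}\to C_{1^{N-2}}$ factors as the quotient $C_{1^{N-1}}\twoheadrightarrow\tilde C_{1^{N-1}}$ followed by an injection, your two isomorphism claims are equivalent to the paper's assertions that the $E_1$-differentials are nonzero on the relevant generators, and both are verified the same way, by inspecting the explicit representatives from the proof of Proposition \ref{prop:lowdegree}. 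What each buys: the spectral sequence handles all of $H^0$ and $H^1$ in one pass but requires checking that nothing survives in the column $H^0(G_{1^{N-2}})$ contributing to total degree one; your induction avoids that column entirely but pushes the burden onto the two quotient-induced maps. Two points to tighten. First, for the injectivity of $H^1(G_{1^{N-1}})(0,N)\to H^1(G_{\tilde 1^{N-1}})(0,N)$ it is not quite enough that the degree-one generators (the $B$-graphs, whose special vertex has valence $N-1$, where $C_{1^{N-1}}\to\tilde C_{1^{N-1}}$ is the identity) are preserved; you also need that no new coboundaries appear in the quotient complex, which follows because the quotient map of complexes is surjective in degree zero and an isomorphism in degree one at this arity --- worth stating explicitly. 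Second, your base case $N=0$ rests on the degenerate generator consisting of the special vertex alone, which sits outside the stability range $2g+n\geq 3$; this is harmless and the paper's spectral sequence reaches down to $C_{1^0}$ as well, but you should either fix the convention that such unstable special vertices are permitted or start the induction at the smallest $N$ for which Proposition \ref{prop:lowdegree} is intended to apply and check that case by hand.
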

\begin{proof}
    We resolve $\tilde C_{1^{N}}$ by \eqref{equ:tilde C fin res} and take the corresponding spectral sequence for the graph complex whose $E^1$-page is the complex 
    \[
    H^\bullet(G_{1^{{N}-1}}) \to H^\bullet(G_{1^{{N}-2}}) \to \cdots
    \]
    and apply the previous proposition.
    Note that the differential here sends $H^k(G_{1^{{N}-1}})$ to $H^k(G_{1^{{N}-2}})$ and acts on the graph by making one $\omega$-decoration into an $\epsilon$. Looking at the explicit representatives found in the preceding proof, this differential maps the generator of $H^0(G_{1^{{N}-1}})(0,N-1)$ nontrivially to the generator of $H^0(G_{1^{{N}-2}})(0,N-1)$. The stated result for $H^0(G_{\tilde 1^{N}})$ follows. 
    Since the generator of $H^0(G_{1^{{N}-2}})(0,N-2)$ is sent nontrivially to the generator of $H^0(G_{1^{{N}-3}})(0,N-2)$
    there is no contribution from $H^\bullet(G_{1^{{N}-2}})$ to $H^1(G_{\tilde 1^{N}})$. 
    Hence all degree one cohomology comes from $H^1(G_{1^{{N}-1}})$. The generator of $H^1(G_{1^{{N}-1}})(0,N+1)$ is sent nontrivially into $H^1(G_{1^{{N}-2}})(0,N+1)$, as can again be seen from the explicit generators found in the preceding proof, and the result for $H^1(G_{\tilde 1^{N}})$ follows. 
\end{proof}

\subsubsection{In $n=0$}
In the absence of numbered external legs, the cohomology of $G_M$ can be fully expressed in terms of the cohomology of the Feynman transform of the commutative operad, or equivalently the weight zero part $W_0H_c^\bullet(\M_{g,n})$ of the compactly supported cohomology of $\M_{g,n}$.
We will follow the latter approach. 

To state the result, let us assemble the $W_0H_c^k(\M_{g,n})$ into a genus-graded symmetric sequence $W_0H_c^k(\M)$ such that 
\[
W_0H_c^k(\M)(g,n) =
\begin{cases}
    W_0H_c^k(\M_{g-n+1,n})\otimes \qq[-1]^{\otimes n} & \text{if $2(g-n+1)+n\geq 3$} \\
    \qq[-1] & \text{if $g=1$, $n=0,2$} \\
    0 & \text{otherwise}
\end{cases}.
\]
The $\ss_n$-action on $W_0H_c^k(\M_{g-n+1,n})\otimes \qq[-1]^{\otimes n}$ is the diagonal one, from the natural $\ss_n$-action on $W_0H_c^k(\M_{g-n+1,n})$ and the one on $\qq[-1]^{\otimes n}$ by permuting the factors with Koszul signs. In other words, the effect of the factor $\qq[-1]^{\otimes n}$ is the same as multiplication by a sign representation, and degree shift by $+n$.
In the cases $g=1$, $n=2$, the one-dimensional graded vector space $\qq[-1]$ is considered in the trivial $\ss_2$-representation. 
Also, mind the genus shifts. These shifts are necessary because  $W_0H_c^k(\M_{g,n})$ corresponds to a blown-up component of internal loop order $g$ with $n$ legs, but the overall contribution to the non-blown-up graph (with the legs fused to the special vertex) is to genus $g+n-1$.

We then define the genus graded symmetric sequence 
\[
M =  \mathrm{Exp}(W_0H_c^\bullet(\M))
= 
\bigoplus_k \big(W_0H_c^\bullet(\M)\hboxtimes \cdots \hboxtimes W_0H_c^\bullet(\M)\big)_{\ss_k}
\]
as the plethystic exponential of $W_0H_c^\bullet(\M)$, extending the genus grading additively. Here we use the product of symmetric sequences
\[
(A \hboxtimes B)(n) = \bigoplus_{r+s = n} \Ind_{\ss_r\times \ss_s}^{\ss_n}A(r) \boxtimes B(s),
\]
see also \cite{GetzlerKapranov} for more details on the plethystic exponential.

Furthermore, extend $M$ as follows:
\[
M' = (1 \oplus V_1[-1])\hboxtimes M,
\]
where we consider the $\ss_1$-representation $V_1$ as symmetric sequence concentrated in arity one, and having genus 1.

\begin{thm}\label{thm:n0 cohom}
Let $\lambda$ be a Young diagram with $n$ boxes.
     Then the graph complex $G_{\lambda}(g, 0)$ has cohomology
     \[
     H^\bullet(G_{\lambda}(g,0)) \cong 
     M'(g,n)
     \otimes_{\ss_n} V_\lambda.
 \]
\end{thm}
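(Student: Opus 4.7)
The plan is to use the blown-up picture of Section~\ref{sec:blown up} together with the Chan--Galatius--Payne identification of $W_0 H_c^\bullet(\M_{g,n})$ with the commutative graph complex. Since $M \mapsto G_M$ is an exact functor (Remark~\ref{rem:GM exact}), it suffices to prove the statement for $G_\rho$ with $\rho = \cc[\ss_n]$ the regular representation and then extract the $V_\lambda$-isotypic component via $G_\lambda \cong G_\rho \otimes_{\ss_n} V_\lambda$. In this formulation the $n$ $\omega$-legs carry distinct labels from $\{1,\dots,n\}$, while the $\epsilon$-legs are indistinguishable. In the blown-up picture, a generator of $G_\rho(g,0)$ becomes a disjoint union of connected components, each of which is a stable graph (trivalent-or-higher internal vertices, no internal self-loops) equipped with a subset of the $\omega$-labels and some unlabeled $\epsilon$-legs; because the splitting differential acts componentwise, the complex $G_\rho(g,0)$ splits as a direct sum, indexed by the combinatorial types of component decompositions, of tensor products of single-component complexes.

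The core step is the identification of each single connected component with a piece of a commutative graph complex. For a component with internal loop number $g'$ and $n'$ external legs (a subset of the $\omega$-labels together with any number of $\epsilon$-legs), the Chan--Galatius--Payne theorem identifies its cohomology with $W_0 H_c^\bullet(\M_{g',n'})$, with a shift $\cc[-1]^{\otimes n'}$ accounting for the reattachment of the $n'$ legs as edges at the special vertex. When all components are assembled at the special vertex, each component contributes $g' + n' - 1$ to the total genus (since $n'$ new edges meeting at one vertex produce $n' - 1$ new cycles), which matches the re-indexing $W_0 H_c^\bullet(\M)(g,n) = W_0 H_c^\bullet(\M_{g-n+1,n})$. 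Assembling contributions from all disjoint collections of components, and summing over ways of distributing the labels $\{1,\dots,n\}$ among them, yields the plethystic exponential $M = \mathrm{Exp}(W_0 H_c^\bullet(\M))$ by the standard symmetric-sequence identity.

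The additional factor $(1 \oplus V_1[-1])$ in $M'$ captures an exceptional configuration not accounted for by the strictly stable framework of $W_0 H_c^\bullet(\M)$, namely a self-loop on the special vertex whose two half-edges include one $\omega$-leg, contributing one to the genus and to the cohomological degree as prescribed. Tensoring the resulting isomorphism $H^\bullet(G_\rho(g,0)) \cong M'(g,n)$ with $V_\lambda$ over $\ss_n$ yields the claimed formula. The main obstacle is the careful bookkeeping of signs, degree shifts, and automorphism factors, particularly in matching the exceptional entries $(g,n) = (1,0)$ and $(1,2)$ in the definition of $W_0 H_c^\bullet(\M)$ and the prefactor $(1 \oplus V_1[-1])$ in $M'$ to the various possible configurations of self-loops at the special vertex and short edges with $\omega$- and $\epsilon$-endpoints.
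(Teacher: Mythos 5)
There is a genuine gap at the core step. You claim that ``because the splitting differential acts componentwise, the complex $G_\rho(g,0)$ splits as a direct sum, indexed by the combinatorial types of component decompositions, of tensor products of single-component complexes.'' This is false for $G_\rho$ itself: the differential $\delta_{split}$ also splits the \emph{special} vertex, and in the blown-up picture this becomes the join operations $\delta_{join}^\epsilon$ and $\delta_{join}^\omega$, which fuse an arbitrary subset of $\epsilon$-legs (possibly together with one $\omega$-leg) into a new internal vertex. Since all of these legs are attached to the single special vertex before blowing up, the join operations gather legs coming from \emph{different} blown-up components and thereby change the component decomposition. The complex therefore does not decompose as you assert, and the identification of each component with a piece of the commutative graph complex cannot be made directly. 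Relatedly, your argument never controls the arbitrarily many unlabeled $\epsilon$-legs that a generator of $G_\rho(g,0)$ may carry; without eliminating these, the assembled object is much larger than $\mathrm{Exp}(W_0H_c^\bullet(\M))$.

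The paper's proof is structured precisely to remove these obstructions before the componentwise/plethystic argument can be run: it first passes to the quasi-isomorphic subcomplex $G_\lambda^\star$, then uses the isomorphism $\Xi$ of \cite[Section 6]{PayneWillwacher21} to delete the term $\delta_{join}^\omega$ from the differential, and then shows (following \cite[Proposition 7.2]{PayneWillwacher21}) that the subcomplex $H_{\lambda,g}$ --- spanned by graphs whose $\epsilon$-legs occur only in a short list of configurations (none, one, an $\epsilon$-$\epsilon$-edge, or an $\epsilon$-$\epsilon$-edge plus one more) --- is quasi-isomorphic. Only on $H_{\lambda,g}$ is the differential purely internal-vertex splitting with no interaction with the external legs, at which point your intended identification with $W_0H_c^\bullet(\M)$, the plethystic exponential, and the factor $(1\oplus V_1[-1])$ (accounting for the $\omega$-$\epsilon$-edge) all go through as in the statement. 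To repair your proof you would need to supply these reduction steps, or an equivalent mechanism for killing the join part of the differential and the excess $\epsilon$-legs.
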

\begin{proof}
    We continue to work with the graph complex $G_\lambda^\star \subset G_\lambda$ quasi-isomorphic to $G_\lambda$ introduced in the proof of Proposition \ref{prop:lowdegree} above.
    We consider in particular the arity zero part $X_{\lambda,g}:=G_\lambda^\star(g,0)$. Recall that elements of this complex are graphs (in the blown-up picture) without numbered legs and with no vertices in any $\epsilon$-component. The differential on $G_\lambda^\star$ has three terms, $\delta=\delta_{split}+\delta_{join}^\epsilon+\delta_{join}^\omega$ with $\delta_{split}$ splitting internal vertices and $\delta_{join}^\epsilon$ (resp. $\delta_{join}^\omega$) fusing an arbitrary subset of $\epsilon$-legs (resp. together with one $\omega$-leg) into a new internal vertex with one $\epsilon$-leg (resp. $\omega$-leg).
    \begin{align*}
        \delta_{join}^\epsilon\colon 
        \begin{tikzpicture}
            \node[ext] (v) at (0,.5) {$\quad$};
            \node (e1) at (-.3,-.3) {$\epsilon$};
            \node (e2) at (0,-.3) {$\epsilon$};
            \node (e3) at (0.3,-.3) {$\epsilon$};
            \draw (v) edge (e1) edge (e2) edge (e3) edge +(0.7, 0) edge +(-0.7, 0);
        \end{tikzpicture}
        & \to
        \begin{tikzpicture}
            \node[ext] (v) at (0,.5) {$\quad$};
            \node[int] (w) at (0,-.2) {};
            \node (e1) at (0,-.7) {$\epsilon$};
            \draw (v) edge[bend left] (w) edge[bend right] (w) edge (w) edge +(0.7, 0) edge +(-0.7, 0)
            (w) edge (e1);
        \end{tikzpicture}
        \\
        \delta_{join}^\omega\colon 
        \begin{tikzpicture}
            \node[ext] (v) at (0,.5) {$\quad$};
            \node (e1) at (-.3,-.3) {$\epsilon$};
            \node (e2) at (0,-.3) {$\omega$};
            \node (e3) at (0.3,-.3) {$\epsilon$};
            \draw (v) edge (e1) edge (e2) edge (e3) edge +(0.7, 0) edge +(-0.7, 0);
        \end{tikzpicture}
                & \to
        \begin{tikzpicture}
            \node[ext] (v) at (0,.5) {$\quad$};
            \node[int] (w) at (0,-.2) {};
            \node (e1) at (0,-.7) {$\omega$};
            \draw (v) edge[bend left] (w) edge[bend right] (w) edge (w) edge +(0.7, 0) edge +(-0.7, 0)
            (w) edge (e1);
        \end{tikzpicture}
    \end{align*}
    Now as in \cite[Section 6]{PayneWillwacher21} there is an isomorphism of dg vector spaces 
    \[
    \Xi \colon(X_{\lambda,g}, \delta_{split}+\delta_{join}^\epsilon)\to (X_{\lambda,g}, \delta_{split}+\delta_{join}^\epsilon+\delta_{join}^\omega).
    \]
    In other words, we may simply drop the term $\delta_{join}^\omega$ from the differential.
    We continue following analogous arguments in \cite[Section 7]{PayneWillwacher21} and define the subcomplex 
    \begin{equation}\label{equ:Hg def}
    (H_{\lambda,g},\delta_{split}) \subset (X_{\lambda,g}, \delta_{split}+\delta_{join}^\epsilon)
    \end{equation}
    spanned by graphs that have either no $\epsilon$-legs, exactly one $\epsilon$-leg, two $\epsilon$-legs forming an $\epsilon$-$\epsilon$-edge, or three $\epsilon$-legs, two of which form an $\epsilon$-$\epsilon$-edge.
    \[
    \underbrace{\begin{tikzpicture}
        \node[ext] (v) at (0,0) {$\cdots$};
    \end{tikzpicture}}_{\text{only $\omega$-legs}}
    \text{  or  }
    \begin{tikzpicture}
        \node[ext] (v) at (0,0) {$\cdots$};
    \end{tikzpicture}
    \begin{tikzpicture}
        \node (e1) at (0,-.4) {$\epsilon$};
        \node (e2) at (0,0.4) {$\epsilon$};
        \draw (e1) edge (e2);
    \end{tikzpicture}
        \text{  or  }
    \begin{tikzpicture}
        \node[ext] (v) at (0,0) {$\cdots$};
    \end{tikzpicture}
    \begin{tikzpicture}
        \node (e1) at (0,-.4) {$\omega$};
        \node (e2) at (0,0.4) {$\epsilon$};
        \draw (e1) edge (e2);
    \end{tikzpicture}
            \text{  or  }
    \begin{tikzpicture}
        \node[ext] (v) at (0,0) {$\cdots$};
    \end{tikzpicture}
    \begin{tikzpicture}
        \node (e1) at (0,-.4) {$\omega$};
        \node (e2) at (0,.4) {$\epsilon$};
        \draw (e1) edge (e2);
    \end{tikzpicture}
        \begin{tikzpicture}
        \node (e1) at (0,-.4) {$\epsilon$};
        \node (e2) at (0,0.4) {$\epsilon$};
        \draw (e1) edge (e2);
    \end{tikzpicture}
    \]
    By the same argument as in \cite[proof of Proposition 7.2]{PayneWillwacher21} we conclude that \eqref{equ:Hg def} is a quasi-isomorphism, so that $H^\bullet(H_{\lambda,g})\cong H^\bullet(G_\lambda(g,0))$.
    The differential on $H_{\lambda,g}$ only acts by splitting internal vertices, and there is no interaction with the external legs.
    Let temporarily $G(g,n)$ be the commutative graph complex computing 
    \[
    W_0 H_c^\bullet(\M_{g,n}) \cong H^\bullet(G(g,n))
    \]
    for $g,n$ such that $2g+n\geq 3$. We define a variant
    \begin{align*}
        G'(g,n) = 
        \begin{cases}
            G(g-n+1,n)\otimes \qq[-1]^{\otimes n} & \text{if $2(g-n+1)+n\geq 3$} \\
            \qq[-1] & \text{if $g=1$, $n=0,2$} \\
            0 & \text{otherwise.}
        \end{cases}
    \end{align*}
    Define a genus graded symmetric sequence $\widehat M'$ such that (cf. the definition of $M'$ above)
    \begin{align*}
        \widehat M &:= \mathrm{Exp}(G')\\
        \widehat M' &:= (1 \oplus V_1[-1])\hboxtimes \widehat M,
    \end{align*}
    so that $H^\bullet(\widehat M')=M'$.
    Then we have the isomorphism of complexes 
\begin{equation}\label{equ:H lambda g id}
    H_{\lambda,g} \cong \widehat M'(g,n)\otimes_{\ss_{n}} V_\lambda.
    \end{equation}
    In more detail, to see that \eqref{equ:H lambda g id} holds, let us discuss the combinatorial meaning of the construction of $\widehat M'$.
    \begin{enumerate}
        \item The construction of $G'$ from $G$ is such that we (i) replace the genus grading by the genus obtained after gluing all hairs to one vertex, (ii) we add a generator in $g=1$, $n=2$ that corresponds to the $(\omega-\omega)$-edge and (iii) we add another generator in $g=0$, $n=1$ that represents an $(\epsilon-\epsilon)$-edge. Hence elements of $G'(g,n)$ can be seen as linear combinations of connected graphs with $n$ hairs and (alternatively counted) genus $g$, allowing the two special one-edge graphs we added. 
        \item Taking the plethytistic exponential produces from the dg vector spaces of connected graphs $G'$ the symmetric sequence $\widehat M$ whose elements can be seen as linear combinations of arbitrary, possible disconneted graphs.
        \item Finally, $\widehat M'$ is obtained from $\widehat M$ by allowing in addition at most one $(\omega-\epsilon)$-leg in the graph, as is represented by the factor $V_1$, in degree $1$ and genus 1.
        \item Elements of $\widehat M'(g,n)$ are linear combinations of graphs with $n$ hairs. We pass to graphs with no hairs but one special vertex with decoration in $V_\lambda$ (to which we connect the hairs) by taking the tensor product $\widehat M'(g,n)\otimes_{\ss_{n}} V_\lambda$, and arrive at \eqref{equ:H lambda g id}.
    \end{enumerate}
    From \eqref{equ:H lambda g id} we conclude the desired result 
    \[
    H^\bullet(G_\lambda(g,0)) = H^\bullet(H_{\lambda,g}) \cong 
    H^\bullet(\widehat M'(g,n))\otimes_{\ss_{n}} V_\lambda
    =
    M'(g,n)\otimes_{\ss_{n}} V_\lambda. \qedhere
    \]
\end{proof}

\begin{rem}
    Note that Theorem \ref{thm:n0 cohom} does not cover the case of $G_{\tilde 1^N}(g,0)$ a priori. However, by Corollary \ref{cor:G tilde G top} we at least know that the top cohomology of $G_{\tilde 1^N}(g,0)$ is the same as that of $G_{1^N}(g,0)$ and can hence be evaluated by Theorem \ref{thm:n0 cohom}. Furthermore, as in the proof of Corollary \ref{cor:G tilde G top}, we have a spectral sequence converging to $H(G_{\tilde 1^N})(g,0)$ whose $E^1$-page \eqref{equ:G 1N sseq} can be evaluated using Theorem \ref{thm:n0 cohom}. The further cancellations on higher pages of this spectral sequence are complicated and not well understood, see the discussion in \cite{PayneWillwacher23}.
\end{rem}

\subsection{Cohomology bound and cohomology in low excess}

Let $\lambda$ be a Young diagram with $N$ boxes.
Then we define the number $r_\lambda$ as the largest integer $\leq N/2$ such that 
\[
V_\lambda \otimes_{\ss_{r_\lambda}\wr \ss_2} (V_2^{\otimes r_\lambda}\otimes \mathrm{sgn}_{r_\lambda})
\neq 0.
\]
Equivalently, this is the largest $r_\lambda$ such that the decomposition of the plethysm 
\begin{equation}\label{equ:Vplet}
V_{1^{r_\lambda}} \circ V_2
\end{equation}
into irreducibles contains a $V_{\lambda'}$ such that $\lambda'\subset \lambda$.
\begin{example}\leavevmode
    \begin{itemize}
        \item If $\lambda=1^k$ then $r_\lambda =0$ as any $\lambda'$ appearing in the decomposition of \eqref{equ:Vplet} has at least two columns.
        \item If $\lambda = p1^q$ is a hook shape with $p\geq 2$ then $r_\lambda=[(p+q)/2]$.
        \item If $\lambda$ has exactly 2 columns (i.e., $\lambda_1=2$) then $r_\lambda=1$. To see this, note that obviously $r_\lambda \geq 1$. Furthermore, for $r\geq 2$,
        $$
        V_{1^{r}} \circ V_2\subset (V_{1^{2}} \circ V_2)\hboxtimes (V_{1^{r-2}} \circ V_2).
        $$
        However, since $V_{1^{2}} \circ V_2=V_{31}$ already has three columns, any irreducible representation $V_{\lambda'}$ occurring in $V_{1^{r}} \circ V_2$ must have $\lambda_1'\geq 3$ columns as well. Hence we conclude that $r_\lambda<2$ and thus $r_\lambda=1$.
        In particular we have $r_{2^k}=r_{2^51^6}=1$.
    \end{itemize}
\end{example}

\begin{prop}\label{prop:lower bound}
Let $\lambda$ be a Young diagram with $N$ boxes. Then 
\[
H^\bullet(G_\lambda)(g,n) = 0
\]
as long as 
\[
3g+2n + \underbrace{\min \{r_\lambda,g\}}_{:=r_g} < 2N
\]
Furthermore, if $3g+2n + r_g = 2N$ then $H^\bullet(G_\lambda)(g,n)$ is either zero or concentrated in a single cohomological degree, represented by graphs of the form 
\[
\begin{tikzpicture}
\node (v1) at (0,-.4) {$\omega$};
\node (v2) at (0,.4) {$1$};
\draw (v1) edge (v2);
\end{tikzpicture}
\cdots 
\begin{tikzpicture}
\node (v1) at (0,-.4) {$\omega$};
\node (v2) at (0,.4) {$n$};
\draw (v1) edge (v2);
\end{tikzpicture}
\underbrace{
\begin{tikzpicture}
\node (v1) at (0,-.4) {$\omega$};
\node (v2) at (0,.4) {$\omega$};
\draw (v1) edge (v2);
\end{tikzpicture}
\cdots 
\begin{tikzpicture}
\node (v1) at (0,-.4) {$\omega$};
\node (v2) at (0,.4) {$\omega$};
\draw (v1) edge (v2);
\end{tikzpicture}
}_{r_g}
\underbrace{
\begin{tikzpicture}
\node[int] (v1) at (0,.4) {};
\node (v2) at (-.4,-.4) {$\omega$};
\node (v3) at (0,-.4) {$\omega$};
\node (v4) at (0.4,-.4) {$\omega$};
\draw (v1) edge (v2) edge (v3) edge (v4);
\end{tikzpicture}
\cdots 
\begin{tikzpicture}
\node[int] (v1) at (0,.4) {};
\node (v2) at (-.4,-.4) {$\omega$};
\node (v3) at (0,-.4) {$\omega$};
\node (v4) at (0.4,-.4) {$\omega$};
\draw (v1) edge (v2) edge (v3) edge (v4);
\end{tikzpicture}
}_{\max \{0,(g-r_g)/2\}}
\]
\end{prop}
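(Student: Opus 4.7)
The plan is to work in the blown-up picture of Section \ref{sec:blown up} and derive the inequality $3g+2n+r_g\geq 2N$ by combining a trivalence/stability count with an upper bound on the number $a_{\omega\omega}$ of floating $\omega$-$\omega$ edges. For any generator $\Gamma$ of $G_\lambda(g,n)$ I first introduce combinatorial invariants: the number $v$ of black vertices, the number $e_1$ of edges between black vertices, the numbers $h_\omega,h_\epsilon$ of $\omega$- and $\epsilon$-hairs on black vertices, the numbers $a_{\omega\omega},a_{\omega\epsilon},a_{\epsilon\epsilon}$ of floating edges of each type, and the numbers $b_\omega,b_\epsilon$ of floating edges of the form $\omega$-$i$, $\epsilon$-$i$. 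These satisfy the $\omega$-count identity $h_\omega+2a_{\omega\omega}+a_{\omega\epsilon}+b_\omega=N$, the cohomological degree formula $\deg=e_1+h_\omega+h_\epsilon+a_{\omega\omega}+a_{\omega\epsilon}+a_{\epsilon\epsilon}$, and the Euler-characteristic identity $v=\deg-g$ for the non-blown-up graph.

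Next, trivalence of the black vertices gives $2e_1+h_\omega+h_\epsilon+(n-b_\omega-b_\epsilon)\geq 3v$. Rewriting this using $v=\deg-g$, adding the trivial inequality $n\geq b_\omega+b_\epsilon$, and subtracting twice the $\omega$-count identity yields
\[
3g+2n-2N\ \geq\ -a_{\omega\omega}+\bigl(e_1+2h_\epsilon+a_{\omega\epsilon}+3a_{\epsilon\epsilon}+2b_\epsilon\bigr)\ \geq\ -a_{\omega\omega}.
\]
I then bound $a_{\omega\omega}\leq r_g=\min\{r_\lambda,g\}$ separately. The inequality $a_{\omega\omega}\leq g$ is immediate, since each floating $\omega$-$\omega$ edge is a loop at the special vertex of the non-blown-up graph and so contributes independently to the first Betti number. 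The inequality $a_{\omega\omega}\leq r_\lambda$ is the content of the definition of $r_\lambda$: the $\ss_2\wr \ss_{a_{\omega\omega}}$-action on the paired $\omega$-labels, together with the Koszul sign from permuting the $a_{\omega\omega}$ edges, forces the $V_\lambda$-decoration to lie in the $(\text{triv}\boxtimes\text{sgn})$-isotypic component of $V_\lambda$ restricted to $\ss_{2a_{\omega\omega}}\supset \ss_2\wr \ss_{a_{\omega\omega}}$, which by Frobenius reciprocity amounts exactly to the plethystic non-vanishing condition defining $r_\lambda$. Combining yields $3g+2n+r_g\geq 2N$, whose contrapositive is the first assertion of the proposition.

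For the equality case $3g+2n+r_g=2N$, every inequality used above must be sharp. This forces $a_{\omega\omega}=r_g$, the vanishing of all non-negative summands $e_1=h_\epsilon=a_{\omega\epsilon}=a_{\epsilon\epsilon}=b_\epsilon=0$, the absence of numbered hairs at black vertices ($n_{\mathrm{black}}:=n-b_\omega-b_\epsilon=0$), and the trivalence identity $h_\omega=3v$. The loop-free skeleton then consists of a star with $v$ tripods attached to the special vertex (hence connected), contributing first Betti number $2v$, while the $r_g$ loops at the special vertex contribute $r_g$ more, giving $g=2v+r_g$ and $v=(g-r_g)/2$. The only surviving generators are therefore the disjoint unions of $n$ edges $\omega$-$i$, of $r_g$ edges $\omega$-$\omega$, and of $v=\max\{0,(g-r_g)/2\}$ tripods depicted in the statement; all lie in the single cohomological degree $3v+r_g=(3g-r_g)/2$ (which matches the upper bound of Lemma \ref{lem:upper deg bound}), so $H^\bullet(G_\lambda)(g,n)$ is either zero (for instance when $g-r_g$ is odd) or concentrated in this degree.

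The main obstacle is the bound $a_{\omega\omega}\leq r_\lambda$, which requires carefully tracking the interplay between the $V_\lambda$-decoration, the $\ss_2$-symmetry swapping the endpoints of each $\omega$-$\omega$ edge, and the Koszul sign from permuting these edges; once this is in place, the remaining steps are routine combinatorial bookkeeping in the blown-up picture.
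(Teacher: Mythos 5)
Your proof is correct and follows essentially the same route as the paper: the paper packages your global trivalence/Euler-characteristic/$\omega$-count into a per-component ``excess'' $E(C_i)=3(h_i-1)+2n_i+3n_\epsilon+n_\omega$ summing to $3g+2n-2N$, observes that only the $\omega$--$\omega$ edge has negative excess ($-1$) while the $\omega$--$i$ edges and $\omega$-tripods have excess $0$ and everything else has excess $\geq 1$, and then invokes the same symmetry bound $a_{\omega\omega}\leq\min\{r_\lambda,g\}$. Your more explicit justification of $a_{\omega\omega}\leq r_\lambda$ via the matching stabilizer and the edge-ordering Koszul sign is a correct expansion of what the paper states only briefly.
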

\begin{proof}
We define the excess of a blown-up component $C_i$ of loop order $h_i$ with $n_i$ numbered legs, $n_\omega$ $\omega$-legs and $n_\epsilon$ $\epsilon$-legs to be 
\begin{equation}\label{equ:exc defi}
E(C_i) = 3(h_i-1) +2n_i + 3n_\epsilon + n_\omega.
\end{equation}
Then for any graph $\Gamma=C_1\cup \dots \cup C_k$ in $G_\lambda(g,n)$ we have
\[
\sum_{i=1}^k E(C_i) =
3\sum_{i=1}^k(h_i - 1 + n_\epsilon + n_\omega) + 2 \sum_{i=1}^k n_i - 2\sum_{i=1}^k n_\omega=
3g + 2n -2N.
\]
For example, the excess is $-1$ for the blown-up component $\begin{tikzpicture}
\node (v1) at (-.4,0) {$\omega$};
\node (v2) at (.4,0) {$\omega$};
\draw (v1) edge (v2);
\end{tikzpicture}$ and zero for the blown up components 
\[
\begin{tikzpicture}
\node (v1) at (0,-.4) {$\omega$};
\node (v2) at (0,.4) {$i$};
\draw (v1) edge (v2);
\end{tikzpicture}
\text{ and } 
\begin{tikzpicture}
\node[int] (v1) at (0,.4) {};
\node (v2) at (-.4,-.4) {$\omega$};
\node (v3) at (0,-.4) {$\omega$};
\node (v4) at (0.4,-.4) {$\omega$};
\draw (v1) edge (v2) edge (v3) edge (v4);
\end{tikzpicture}.
\]
We claim that the excess of any other blown-up component is $\geq 1$. To see this, note that a blown-up component $C$ of excess $E(C)\leq 0$ necessarily has loop order $h=0$, since it must have at least one $\epsilon$- or $\omega$-leg.
So $C$ is a tree. If $C$ is a single edge and not of the forms above, it is one of 
\begin{align*}
&
\begin{tikzpicture}
\node (v1) at (0,-.4) {$\epsilon$};
\node (v2) at (0,.4) {$i$};
\draw (v1) edge (v2);
\end{tikzpicture},
\quad
\begin{tikzpicture}
\node (v1) at (0,-.4) {$\omega$};
\node (v2) at (0,.4) {$\epsilon$};
\draw (v1) edge (v2);
\end{tikzpicture},
\quad
\begin{tikzpicture}
\node (v1) at (0,-.4) {$\epsilon$};
\node (v2) at (0,.4) {$\epsilon$};
\draw (v1) edge (v2);
\end{tikzpicture},
\end{align*}
each of which have excess $\geq 1$. Hence $C$ must have at least one vertex, and hence at least 3 legs. But then it is clear from the definition \eqref{equ:exc defi} that $C$ can only be of excess zero if $C$ has exactly three legs which are all $\omega$-legs.

The number $r_\lambda$ is defined to be the largest number of components $\begin{tikzpicture}
\node (v1) at (-.4,0) {$\omega$};
\node (v2) at (.4,0) {$\omega$};
\draw (v1) edge (v2);
\end{tikzpicture}$ that can be present on the grounds of symmetry. This is for a generic pair $(g,n)$. For specific $g,n$ there might well be fewer, in particular at most $g$.
This readily yields the bound and cohomology representative of the proposition.
\end{proof}

\begin{rem}
Let us combine the above analysis with Theorem \ref{thm:n0 cohom}.
The weight zero cohomology $W_0H_c^\bullet(\M_{g,n})$ is known completely for $g=0,1$ and has furthermore been computed for all $g,n$ such that $3(g-1)+n\leq 12$ \cite{BrunWillwacher, BCGH}.
This means we know the blown-up components up to excess 12.
By the analysis in the preceding proof, the maximum excess of a blown-up component appearing in $G_\lambda(g,n)$ is 
$3g+2n+r_\lambda-2|\lambda|$. Hence we can compute $H^\bullet(G_\lambda)(g,n)$ from existing knowledge of $W_0H_c^\bullet(\M_{g,n})$ as long as $3g+2n+r_\lambda-2|\lambda| \leq 12$.  
\end{rem}

\subsection{Special case \texorpdfstring{$n=0$}{n0} and \texorpdfstring{$G_{\tilde 1^k}$}{Gtilde1k}}
Note that the complex $G_{\tilde 1^k}$ is a priori not covered by Theorem \ref{thm:n0 cohom}. However, by explicit computation similar to \cite{PayneWillwacher23} we find the following result:
\begin{prop}\label{prop:n0 tilde 1k}
    We have that $H^\bullet(G_{\tilde 1^k}(g,0))=0$ as long as $3g<2k$. Furthermore, 
    \[
    H^j(G_{\tilde 1^k}(\lceil 2 k/3 \rceil,0))
        =
        \begin{cases}
            \qq & \text{if $j=k$} \\
            0 &\text{otherwise}
        \end{cases}
    \]
    
\end{prop}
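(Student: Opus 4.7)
The plan is to apply the spectral sequence from the proof of Corollary \ref{cor:G tilde G top}, specialized to $n = 0$. Applying the exact functor $G_{-}(g, 0)$ (Remark \ref{rem:GM exact}) to the truncated resolution $\cdots \to C_{1^{k+2}} \to C_{1^{k+1}} \to C_{1^k} \to \tilde C_{1^k} \to 0$ obtained from \eqref{equ:C long exact} produces a quasi-isomorphism between $G_{\tilde 1^k}(g,0)$ and the total complex of $\cdots \to G_{1^{k+2}}(g,0) \to G_{1^{k+1}}(g,0) \to G_{1^k}(g,0)$, with $G_{1^{k+p}}(g,0)$ placed in horizontal degree $-p$. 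The associated spectral sequence has
\[
E_1^{-p,q} = H^q(G_{1^{k+p}}(g,0))
\]
and converges to $H^{q-p}(G_{\tilde 1^k}(g,0))$.

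For the vanishing claim, suppose $3g < 2k$. Since $r_{1^{k+p}} = 0$ for every $p \geq 0$, Proposition \ref{prop:lower bound} forces $H^\bullet(G_{1^{k+p}}(g,0)) = 0$ whenever $3g < 2(k+p)$, which always holds here because $3g < 2k \leq 2(k+p)$. The $E_1$-page therefore vanishes, and so does $H^\bullet(G_{\tilde 1^k}(g,0))$.

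For $g = \lceil 2k/3 \rceil$, set $s := 3g - 2k \in \{0,1,2\}$. Proposition \ref{prop:lower bound} again annihilates $E_1^{-p,q}$ unless $p \leq s/2$, so only $p = 0$ contributes when $s \in \{0,1\}$, and only $p \in \{0,1\}$ when $s = 2$. In the two extremal sub-cases $3g = 2(k+p)$, namely $(s,p) \in \{(0,0),(2,1)\}$, the equality case of Proposition \ref{prop:lower bound} places the cohomology of $G_{1^{k+p}}(g,0)$ in the single top degree $k+p$, represented by a disjoint union of $(k+p)/3$ tripods. Combining Theorem \ref{thm:n0 cohom} with a Frobenius-reciprocity calculation for the wreath-product inclusion $\ss_3 \wr \ss_{(k+p)/3} \hookrightarrow \ss_{k+p}$ shows that the sign-isotypic part of this cohomology is one-dimensional, producing in each case a single class in total degree $(k+p) - p = k$.

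The hard part will be handling the non-extremal contributions that arise when $s \in \{1,2\}$. Using Theorem \ref{thm:n0 cohom} together with the structure $M' = (1 \oplus V_1[-1]) \hboxtimes \mathrm{Exp}(W_0 H_c^\bullet(\M))$, I expect the distinguished class in total degree $k$ to arise, in the $s = 1$ case, from the configuration of $m$ tripods combined with one $\omega$-$\epsilon$-edge (contributing the $V_1[-1]$-factor of $M'$), and in the $s = 2$ case from an analogous configuration involving an additional $\omega$-$\omega$-edge and an $\epsilon$-$\epsilon$-edge. One must verify, via a careful excess-analysis of blown-up components of $M'$, that all remaining sign-isotypic contributions either vanish a priori or are cancelled by the $d_1$-differential $E_1^{-1,q} \to E_1^{0,q}$ (which is nontrivial only in the $s = 2$ case), leaving only the desired single copy of $\qq$ in total degree $k$. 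This explicit computation is analogous in spirit to those carried out in \cite{PayneWillwacher23}.
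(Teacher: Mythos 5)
Your reduction via the resolution $\cdots \to C_{1^{k+1}} \to C_{1^k} \to \tilde C_{1^k} \to 0$ and the resulting spectral sequence with $E_1^{-p,q}=H^q(G_{1^{k+p}}(g,0))$ is sound (the sequence is effectively finite for fixed $g$ since $G_{1^{N}}(g,0)=0$ for $N>2g$), and it correctly disposes of the vanishing claim for $3g<2k$ and of the case $k\equiv 0 \pmod 3$, where only the extremal term survives and Proposition \ref{prop:lower bound} pins the answer down. This is essentially a repackaging of what the paper does: the paper works directly in (a reduced model of) $G_{\tilde 1^k}(g,0)$, where the image of $C_{1^{k+1}}\to C_{1^k}$ appears as a graphical relation (``sum over all ways of turning one $\omega$-leg into an $\epsilon$-leg equals zero''), which is exactly your $d_1$-differential $E_1^{-1,\bullet}\to E_1^{0,\bullet}$.

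The problem is that for $k\equiv 1,2\pmod 3$ your argument stops exactly where the proof begins. In those cases the surviving $E_1$-entry $H^\bullet(G_{1^k}(g,0))$ sits in excess $1$ or $2$, Proposition \ref{prop:lower bound} gives no information, and the entire content of the statement is the claim that after accounting for all excess-$1$ and excess-$2$ blown-up components the sign-isotypic cohomology collapses to a single $\qq$ in total degree $k$. You write ``I expect'' and ``one must verify,'' but this verification is not a routine afterthought: the paper's proof consists precisely of enumerating the handful of contributing graphs (three in the excess-$1$ case; four candidate excess-$2$ components plus six two-component configurations in the excess-$2$ case) and checking, graph by graph, which vanish by symmetry, which cancel under $\delta_{split}$, and which are killed by the relation coming from $C_{1^{k+1}}$ (your $d_1$). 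For instance, in the $k\equiv 2$ case the graph with a $4$-valent vertex carrying four $\omega$-legs lies in degree $k$ and is only eliminated because it is a nonzero multiple of $d_1$ applied to the all-tripod generator of $H^{k+1}(G_{1^{k+1}}(g,0))$; and the surviving class in degree $k$ is represented not by your proposed configuration with an $\epsilon$-$\epsilon$-edge but by tripods together with two $\omega$-$\epsilon$-edges. Without carrying out this finite but essential bookkeeping, the proposal does not establish either that $H^k$ is exactly one-dimensional or that $H^j=0$ for $j\neq k$ in these two residue classes, so the proof is incomplete.
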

\begin{proof}
In each case we collect the contributing graphs. As in  \cite[Section 4]{PayneWillwacher23}, we only need to consider blown-up components of low excess.
We distinguish three cases.

    Case $k\equiv 0 \mod 3$: This is excess 0, and the only contributing graph is 
    \[
    \underbrace{
    \begin{tikzpicture}
\node[int] (v1) at (0,.4) {};
\node (v2) at (-.4,-.4) {$\omega$};
\node (v3) at (0,-.4) {$\omega$};
\node (v4) at (0.4,-.4) {$\omega$};
\draw (v1) edge (v2) edge (v3) edge (v4);
\end{tikzpicture}
\cdots 
\begin{tikzpicture}
\node[int] (v1) at (0,.4) {};
\node (v2) at (-.4,-.4) {$\omega$};
\node (v3) at (0,-.4) {$\omega$};
\node (v4) at (0.4,-.4) {$\omega$};
\draw (v1) edge (v2) edge (v3) edge (v4);
\end{tikzpicture}
}_{k/3}.
    \]
    This graph lives in degree $k$.

    Case $k\equiv 1 \mod 3$: This is excess 1, so the contributing graphs have exactly one blown-up component in excess 1.
    There are hence three contributing graphs:
    \begin{align*}
        A&:=\begin{tikzpicture}
\node[int] (v1) at (0,.4) {};
\node (v2) at (-.4,-.4) {$\omega$};
\node (v3) at (0,-.4) {$\omega$};
\node (v4) at (0.4,-.4) {$\omega$};
\draw (v1) edge (v2) edge (v3) edge (v4);
\end{tikzpicture}
\cdots 
\begin{tikzpicture}
\node[int] (v1) at (0,.4) {};
\node (v2) at (-.4,-.4) {$\omega$};
\node (v3) at (0,-.4) {$\omega$};
\node (v4) at (0.4,-.4) {$\omega$};
\draw (v1) edge (v2) edge (v3) edge (v4);
\end{tikzpicture}
  \begin{tikzpicture}
    \node (v) at (0,0) {$\omega$};
    \node (w) at (1,0) {$\epsilon$};
    \draw (v) edge (w);
  \end{tikzpicture}
  \\
B&:= \begin{tikzpicture}
\node[int] (v1) at (0,.4) {};
\node (v2) at (-.4,-.4) {$\omega$};
\node (v3) at (0,-.4) {$\omega$};
\node (v4) at (0.4,-.4) {$\omega$};
\draw (v1) edge (v2) edge (v3) edge (v4);
\end{tikzpicture}
\cdots 
\begin{tikzpicture}
\node[int] (v1) at (0,.4) {};
\node (v2) at (-.4,-.4) {$\omega$};
\node (v3) at (0,-.4) {$\omega$};
\node (v4) at (0.4,-.4) {$\omega$};
\draw (v1) edge (v2) edge (v3) edge (v4);
\end{tikzpicture}
\begin{tikzpicture}
  \node[int] (i) at (0,.5) {};
  \node (v1) at (-.6,-.4) {$\omega$};
  \node (v2) at (-.2,-.4) {$\omega$};
  \node (v3) at (.2,-.4) {$\omega$};
  \node (v4) at (.6,-.4) {$\omega$};
\draw (i) edge (v1) edge (v2) edge (v3) edge (v4);
\end{tikzpicture}
  \\
C&:= \begin{tikzpicture}
\node[int] (v1) at (0,.4) {};
\node (v2) at (-.4,-.4) {$\omega$};
\node (v3) at (0,-.4) {$\omega$};
\node (v4) at (0.4,-.4) {$\omega$};
\draw (v1) edge (v2) edge (v3) edge (v4);
\end{tikzpicture}
\cdots 
\begin{tikzpicture}
\node[int] (v1) at (0,.4) {};
\node (v2) at (-.4,-.4) {$\omega$};
\node (v3) at (0,-.4) {$\omega$};
\node (v4) at (0.4,-.4) {$\omega$};
\draw (v1) edge (v2) edge (v3) edge (v4);
\end{tikzpicture}
\begin{tikzpicture}
  \node[int] (i) at (0,.5) {};
  \node[int] (j) at (1,.5) {};
  \node (v1) at (-.5,-.4) {$\omega$};
  \node (v2) at (0,-.4) {$\omega$};
  \node (v3) at (1,-.4) {$\omega$};
  \node (v4) at (1.5,-.4) {$\omega$};
\draw (i) edge (v1) edge (v2) edge (j) (j) edge (v3) edge (v4);
\end{tikzpicture}
\end{align*}
Graphs $A$ and $B$ live in cohomological degree $k$ while graph $C$ lives in degree $k+1$. Graph $C$ is hit by the differential, as is easy to see, so that the cohomology is again 1-dimensional concentrated in degree $k$.

    Case $k\equiv 2 \mod 3$: This is excess 2, so we can have exactly one blown-up component in excess 2, or two blown-up components in excess 1. 
    There are a priori four possibilities for a graph with a blown up component of excess 2. Three of these possibilities have five $\omega$-legs and are not drawn, because one of them is zero by symmetry, and the other two cancel under the differential. The fourth is drawn as $B$ below. This is however zero by the relations in the $\ss_k$-module $V_{1^k}$. Namely, the relation is obtained graphically by starting with a graph with $k+1$ many $\omega$-legs (such as graph $A$ drawn below) and then summing over all ways of making one $\omega$-leg into an $\epsilon$-leg. This produces from $A$ a non-zero multiple of $B$, hence $B=0$.
    
    Next, there are six possibilities for a graph with two blown up components of excess 1. The possible excess one components are seen pictured above as the last component of the graphs in the previous case. All are seen to cancel each other under the differential except graph $C$ below. 

        \begin{align*}
            A&:=\begin{tikzpicture}
\node[int] (v1) at (0,.4) {};
\node (v2) at (-.4,-.4) {$\omega$};
\node (v3) at (0,-.4) {$\omega$};
\node (v4) at (0.4,-.4) {$\omega$};
\draw (v1) edge (v2) edge (v3) edge (v4);
\end{tikzpicture}
\cdots 
\begin{tikzpicture}
\node[int] (v1) at (0,.4) {};
\node (v2) at (-.4,-.4) {$\omega$};
\node (v3) at (0,-.4) {$\omega$};
\node (v4) at (0.4,-.4) {$\omega$};
\draw (v1) edge (v2) edge (v3) edge (v4);
\end{tikzpicture}
\\
B&:=
\begin{tikzpicture}
\node[int] (v1) at (0,.4) {};
\node (v2) at (-.4,-.4) {$\omega$};
\node (v3) at (0,-.4) {$\omega$};
\node (v4) at (0.4,-.4) {$\omega$};
\draw (v1) edge (v2) edge (v3) edge (v4);
\end{tikzpicture}
\cdots 
\begin{tikzpicture}
\node[int] (v1) at (0,.4) {};
\node (v2) at (-.4,-.4) {$\omega$};
\node (v3) at (0,-.4) {$\omega$};
\node (v4) at (0.4,-.4) {$\omega$};
\draw (v1) edge (v2) edge (v3) edge (v4);
\end{tikzpicture}
\begin{tikzpicture}
\node[int] (v1) at (0,.4) {};
\node (v2) at (-.4,-.4) {$\omega$};
\node (v3) at (0,-.4) {$\omega$};
\node (v4) at (0.4,-.4) {$\epsilon$};
\draw (v1) edge (v2) edge (v3) edge (v4);
\end{tikzpicture}
\\
C&:=
\begin{tikzpicture}
\node[int] (v1) at (0,.4) {};
\node (v2) at (-.4,-.4) {$\omega$};
\node (v3) at (0,-.4) {$\omega$};
\node (v4) at (0.4,-.4) {$\omega$};
\draw (v1) edge (v2) edge (v3) edge (v4);
\end{tikzpicture}
\cdots 
\begin{tikzpicture}
\node[int] (v1) at (0,.4) {};
\node (v2) at (-.4,-.4) {$\omega$};
\node (v3) at (0,-.4) {$\omega$};
\node (v4) at (0.4,-.4) {$\omega$};
\draw (v1) edge (v2) edge (v3) edge (v4);
\end{tikzpicture}
  \begin{tikzpicture}
    \node (v) at (0,-0.3) {$\omega$};
    \node (w) at (1,-0.3) {$\epsilon$};
        \node (v1) at (0,0.3) {$\omega$};
    \node (w1) at (1,0.3) {$\epsilon$};
    \draw (v) edge (w) (v1) edge (w1);
  \end{tikzpicture}.
\end{align*}
The only contribution to the cohomology is hence from graph $C$, which is of degree $k$.
\end{proof}

\section{Hodge weight (17,0) and (19,0) %
cohomology of \texorpdfstring{$\M_{g,n}$}{Mgn}} \label{sec:app}

We can use the graph complexes of the previous section to study the Hodge weight $(17,0)$ and $(19,0)$ parts of the compactly supported cohomology of $\M_{g,n}$, as follows. 

\subsection{Hodge weight (17,0)} Recall the modular cooperad $\T$ of Section \ref{sec:1719} as well as the fact that the weight 17 part of its Feynman transform $\F \T^{17}$ computes $\gr_{17,0}H_c^{\bullet}(\M_{g,n})$.
By Remark \ref{rem:GC relation} the cohomology of $\F \T^{17}$ can be expressed through graph complexes $G_\lambda$, as
\[
\gr_{17,0}H_c^{k}(\M_{g,n})
\cong 
H^{k-17}(G_{\tilde 1^{17}}(g-1,n))
\oplus H^{k-17}(G_{2^7}(g-2,n)).
\]

Corollaries \ref{cor:EC1} and \ref{cor:EC2} then immediately yield:
\begin{thm} \label{thm:ec17}
    The $\ss_n$-equivariant Euler characteristic of the Hodge weight $(17,0)$ compactly supported cohomology of $\M_{g,n}$ satisfies 
    \begin{multline*}
\sum_{g,n\geq 0} u^{g+n} \chi_{\ss_n}(\mathrm{gr}_{17,0}H_c^\bullet(\M_{g,n}) )
  \\=
\   -u T_{\leq 16}\bigg(
  \prod_{\ell\geq 1} 
  \frac { 
    U_\ell(\frac 1 \ell \sum_{d\mid \ell} \mu(\ell/d) (-p_d  +1-w^d ), u )
  }
  { 
    U_\ell(\frac 1 \ell \sum_{d\mid \ell} \mu(\ell/d) (-p_d), u )
  }-1\bigg)
  \\
  \  -u^2(T_{w_1^7w_2^7} -T_{w_1^{8}w_2^{6}})\bigg(
  \prod_{\ell\geq 1} 
  \frac { 
    U_\ell(\frac 1 \ell \sum_{d\mid \ell} \mu(\ell/d) (-p_d  +1-w_1^d-w_2^d ), u )
  }
  { 
    U_\ell(\frac 1 \ell \sum_{d\mid \ell} \mu(\ell/d) (-p_d), u )
  }-1\bigg) \, .
\end{multline*}
\end{thm}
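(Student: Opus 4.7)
The proof assembles pieces that are already in place in the excerpt. First, I would invoke Theorem~\ref{thm:upto17} to identify, in each bidegree, the weight $17$ part of the modular cooperad of holomorphic forms with the subcooperad $\T^\bullet$ constructed in Section~\ref{sec:1719}. Combined with the identification $\gr_{17}\F H(\Mb)(g,n) = \mathsf{GK}_{g,n}^{17}$ and the fact that the Hodge weight $(17,0)$ compactly supported cohomology of $\M_{g,n}$ is computed by $\mathsf{GK}_{g,n}^{17,0}$, this shows
\[
\gr_{17,0}H_c^{k}(\M_{g,n}) \;\cong\; H^{k-17}\bigl(G_{\tilde 1^{17}}(g-1,n)\bigr)\;\oplus\; H^{k-17}\bigl(G_{2^{7}}(g-2,n)\bigr),
\]
via the decomposition spelled out in Remark~\ref{rem:GC relation} (together with Lemma~\ref{lem:G vs hat G} to pass from $\widehat G$ to $G$).

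Next, the plan is to take $\ss_n$-equivariant Euler characteristics and repackage them as a generating series in $u$. Since Euler characteristics commute with taking cohomology, and a cohomological shift by $[-17]$ multiplies the Euler characteristic by $(-1)^{17} = -1$, one gets
\[
\chi_{\ss_n}\bigl(\gr_{17,0}H_c^\bullet(\M_{g,n})\bigr) \;=\; -\chi_{\ss_n}\bigl(G_{\tilde 1^{17}}(g-1,n)\bigr)\;-\;\chi_{\ss_n}\bigl(G_{2^7}(g-2,n)\bigr).
\]
Multiplying by $u^{g+n}$ and summing over $g,n$, the shift $g \mapsto g-1$ (resp.\ $g \mapsto g-2$) in the first (resp.\ second) summand produces an overall factor of $u$ (resp.\ $u^{2}$) when we re-index to match the generating series of $G_{\tilde 1^{17}}$ and $G_{2^{7}}$. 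This yields two prefactors $-u$ and $-u^2$ exactly as in the stated formula.

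Finally, I would substitute the two Euler characteristic formulas proved in the previous section: Corollary~\ref{cor:EC1} with $a=17$ gives the first generating series, noting $(-1)^{17-1}=1$ and the truncation $T_{\leq 16}$; Corollary~\ref{cor:EC2} with $k=7$, $\ell=0$ gives the second generating series with the operator $T_{w_1^7 w_2^7} - T_{w_1^{8}w_2^{6}}$. Plugging these into the identity above produces the formula in Theorem~\ref{thm:ec17} verbatim.

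There is no real obstacle here beyond careful bookkeeping: the main points to verify are the sign from the degree shift $[-17]$, the $u^1$ and $u^2$ factors from the genus reindexing in Remark~\ref{rem:GC relation}, and the vanishing of the $g=0$ boundary contributions (which hold automatically because $G_{\tilde 1^{17}}(-1,n)$ and $G_{2^7}(-2,n)$ are zero by convention, so the shifts introduce no spurious terms). The substantive mathematical content—the identification of holomorphic forms with simple $\FA$-modules and the Euler characteristic formulas for the associated graph complexes—has already been established, so the theorem follows by direct assembly.
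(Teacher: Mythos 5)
Your proposal is correct and follows essentially the same route as the paper: identify $\gr_{17,0}H_c^\bullet(\M_{g,n})$ with the cohomology of $\F\T^{17}$ via Theorem \ref{thm:upto17}, decompose it through $G_{\tilde 1^{17}}(g-1,n)$ and $G_{2^7}(g-2,n)$ using Remark \ref{rem:GC relation}, and substitute Corollaries \ref{cor:EC1} and \ref{cor:EC2}. Your bookkeeping of the sign from the $[-17]$ shift and the $u$, $u^2$ prefactors from the genus reindexing is exactly the content the paper leaves implicit in its ``immediately yield.''
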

\begin{figure}
    \centering
    \resizebox{\textwidth}{!}{
\begin{tabular}{|g|M|M|M|M|M|M|M|} \hline \rowcolor{Gray} g,n 
& 0 
& 1 
& 2 
& 3 
& 4 
& 5 
& 6 
\\ 
\hline
 8 
& $ 0 $ 
& $ 0 $ 
& $ 0 $ 
& $ 0 $ 
& $ 0 $ 
& $ 0 $ 
& $ 0 $ 
 \\  
\hline
 9 
& $ 0 $ 
& $ 0 $ 
& $ 0 $ 
& $ 0 $ 
& $ 0 $ 
& $ -s_{1,1,1,1,1} $ 
& $ -s_{1,1,1,1,1,1} + 2 s_{3,1,1,1} $ 
 \\  
\hline
 10 
& $ 0 $ 
& $ 0 $ 
& $ 0 $ 
& $ 0 $ 
& $ -s_{2,1,1} $ 
& $ -s_{2,1,1,1} + s_{3,1,1} + s_{3,2} + 2 s_{4,1} $ 
& $ -4 s_{1,1,1,1,1,1} - 2 s_{2,1,1,1,1} - 3 s_{2,2,1,1} - 3 s_{2,2,2} + s_{3,1,1,1} - 2 s_{3,2,1} - 2 s_{3,3} + 2 s_{4,1,1} - 3 s_{4,2} - 2 s_{5,1} - 3 s_{6} $ 
 \\  
\hline
 11 
& $ 0 $ 
& $ 0 $ 
& $ s_{1,1} $ 
& $ s_{1,1,1} - 2 s_{3} $ 
& $ 2 s_{2,2} - 2 s_{3,1} $ 
& $ -s_{1,1,1,1,1} - 7 s_{2,1,1,1} - 6 s_{3,1,1} + 2 s_{3,2} + s_{4,1} + 4 s_{5} $ 
& $ -3 s_{1,1,1,1,1,1} - 15 s_{2,1,1,1,1} + 3 s_{2,2,1,1} - 5 s_{2,2,2} + 2 s_{3,1,1,1} + 11 s_{3,2,1} + 10 s_{3,3} + 21 s_{4,1,1} + 3 s_{4,2} + 6 s_{5,1} $ 
 \\  
\hline
 12 
& $ 0 $ 
& $ 0 $ 
& $ s_{2} $ 
& $ 4 s_{1,1,1} $ 
& $ 10 s_{1,1,1,1} + 2 s_{2,1,1} - 2 s_{2,2} - 10 s_{3,1} - 3 s_{4} $ 
& $ 17 s_{1,1,1,1,1} - s_{2,1,1,1} + 8 s_{2,2,1} - 30 s_{3,1,1} + 7 s_{3,2} + 14 s_{5} $ 
& $ 20 s_{1,1,1,1,1,1} - 20 s_{2,1,1,1,1} + 4 s_{2,2,1,1} - 8 s_{2,2,2} - 85 s_{3,1,1,1} + 37 s_{3,3} - 9 s_{4,1,1} + 25 s_{4,2} + 58 s_{5,1} + 16 s_{6} $ 
\\  
\hline
 13 
& $ s_{} $ 
& $ s_{1} $ 
& $ 4 s_{2} $ 
& $ s_{1,1,1} + 11 s_{2,1} + 4 s_{3} $ 
& $ 9 s_{1,1,1,1} + 26 s_{2,1,1} - 6 s_{2,2} - 9 s_{3,1} - 27 s_{4} $ 
& $ 41 s_{1,1,1,1,1} + 52 s_{2,1,1,1} + 26 s_{2,2,1} - 35 s_{3,1,1} - 24 s_{3,2} - 68 s_{4,1} - 21 s_{5} $ 
& $ 122 s_{1,1,1,1,1,1} + 67 s_{2,1,1,1,1} + 17 s_{2,2,1,1} + 116 s_{2,2,2} - 258 s_{3,1,1,1} - 70 s_{3,2,1} + 66 s_{3,3} - 240 s_{4,1,1} + 124 s_{4,2} + 89 s_{5,1} + 119 s_{6} $ 
 \\  
\hline
 14 
& $ -2 s_{} $ 
& $ -6 s_{1} $ 
& $ -9 s_{1,1} - 2 s_{2} $ 
& $ -15 s_{1,1,1} + 8 s_{2,1} + 31 s_{3} $ 
& $ -6 s_{1,1,1,1} + 61 s_{2,1,1} - 19 s_{2,2} + 67 s_{3,1} + 7 s_{4} $ 
& $ 22 s_{1,1,1,1,1} + 222 s_{2,1,1,1} - 19 s_{2,2,1} + 150 s_{3,1,1} - 215 s_{3,2} - 208 s_{4,1} - 171 s_{5} $ 
& $ 158 s_{1,1,1,1,1,1} + 615 s_{2,1,1,1,1} + 164 s_{2,2,1,1} + 410 s_{2,2,2} + 121 s_{3,1,1,1} - 78 s_{3,2,1} - 266 s_{3,3} - 665 s_{4,1,1} + 33 s_{4,2} - 239 s_{5,1} + 112 s_{6} $ 
 \\  
\hline
 15 
& $ 2 s_{} $ 
& $ s_{1} $ 
& $ -12 s_{1,1} - 19 s_{2} $ 
& $ -72 s_{1,1,1} - 13 s_{2,1} + 35 s_{3} $ 
& $ -183 s_{1,1,1,1} - 63 s_{2,1,1} + 69 s_{2,2} + 293 s_{3,1} + 164 s_{4} $ 
& $ -335 s_{1,1,1,1,1} + 178 s_{2,1,1,1} - 108 s_{2,2,1} + 967 s_{3,1,1} - 135 s_{3,2} + 75 s_{4,1} - 379 s_{5} $ 
& $ -400 s_{1,1,1,1,1,1} + 854 s_{2,1,1,1,1} + 178 s_{2,2,1,1} + 2 s_{2,2,2} + 2648 s_{3,1,1,1} - 355 s_{3,2,1} - 1539 s_{3,3} + 55 s_{4,1,1} - 1904 s_{4,2} - 2361 s_{5,1} - 997 s_{6} $ 
 \\  
\hline
 16 
& $ 0 $ 
& $ 4 s_{1} $ 
& $ -2 s_{1,1} - 66 s_{2} $ 
& $ -32 s_{1,1,1} - 210 s_{2,1} - 123 s_{3} $ 
& $ -299 s_{1,1,1,1} - 651 s_{2,1,1} + 181 s_{2,2} + 190 s_{3,1} + 582 s_{4} $ 
& $ -1007 s_{1,1,1,1,1} - 1211 s_{2,1,1,1} - 298 s_{2,2,1} + 1233 s_{3,1,1} + 1420 s_{3,2} + 2348 s_{4,1} + 878 s_{5} $ 
& $ -2743 s_{1,1,1,1,1,1} - 2033 s_{2,1,1,1,1} - 363 s_{2,2,1,1} - 3884 s_{2,2,2} + 7113 s_{3,1,1,1} + 851 s_{3,2,1} - 2133 s_{3,3} + 6530 s_{4,1,1} - 4981 s_{4,2} - 3563 s_{5,1} - 3703 s_{6} $ 
 \\  
\hline
 17 
& $ 8 s_{} $ 
& $ 56 s_{1} $ 
& $ 119 s_{1,1} + 45 s_{2} $ 
& $ 251 s_{1,1,1} - 286 s_{2,1} - 601 s_{3} $ 
& $ 142 s_{1,1,1,1} - 1260 s_{2,1,1} + 119 s_{2,2} - 1529 s_{3,1} - 207 s_{4} $ 
& $ -661 s_{1,1,1,1,1} - 5866 s_{2,1,1,1} - 322 s_{2,2,1} - 4471 s_{3,1,1} + 4972 s_{3,2} + 5278 s_{4,1} + 4593 s_{5} $ 
& $ -4034 s_{1,1,1,1,1,1} - 12900 s_{2,1,1,1,1} - 2330 s_{2,2,1,1} - 7708 s_{2,2,2} - 246 s_{3,1,1,1} + 9701 s_{3,2,1} + 9578 s_{3,3} + 20880 s_{4,1,1} + 7594 s_{4,2} + 11768 s_{5,1} - 525 s_{6} $ 
 \\  
\hline
 18 
& $ -22 s_{} $ 
& $ 8 s_{1} $ 
& $ 242 s_{1,1} + 346 s_{2} $ 
& $ 1098 s_{1,1,1} + 386 s_{2,1} - 567 s_{3} $ 
& $ 2976 s_{1,1,1,1} + 1119 s_{2,1,1} - 1948 s_{2,2} - 6074 s_{3,1} - 4189 s_{4} $ 
& $ 5518 s_{1,1,1,1,1} - 4924 s_{2,1,1,1} + 564 s_{2,2,1} - 19949 s_{3,1,1} + 929 s_{3,2} - 3591 s_{4,1} + 5978 s_{5} $ 
& $ 6480 s_{1,1,1,1,1,1} - 22699 s_{2,1,1,1,1} - 12797 s_{2,2,1,1} + 3223 s_{2,2,2} - 62759 s_{3,1,1,1} + 8152 s_{3,2,1} + 36941 s_{3,3} - 2232 s_{4,1,1} + 56407 s_{4,2} + 60353 s_{5,1} + 28989 s_{6} $ \\
 \hline
\end{tabular}
    }
    \caption{Contribution of the first term of Theorem \ref{thm:ec17} to the equivariant Euler characteristics $\chi_{\ss_n}(\mathrm{gr}_{17,0}H^\bullet_c(\M_{g,n} ))$ for various $(g,n)$.}
    \label{fig:ec table1}
\end{figure}

\begin{figure}
    \centering
    \resizebox{\textwidth}{!}{
\begin{tabular}{|g|M|M|M|M|M|M|M|} \hline \rowcolor{Gray} g,n 
& 0 
& 1 
& 2 
& 3 
& 4 
& 5 
& 6 
\\ 
\hline
 8 
& $ 0 $ 
& $ 0 $ 
& $ 0 $ 
& $ 0 $ 
& $ 0 $ 
& $ 2 s_{1,1,1,1,1} + 4 s_{2,1,1,1} + s_{2,2,1} + s_{3,1,1} + s_{3,2} $ 
& $ -3 s_{2,1,1,1,1} - 9 s_{2,2,1,1} - 8 s_{2,2,2} - 16 s_{3,1,1,1} - 20 s_{3,2,1} - 4 s_{3,3} - 14 s_{4,1,1} - 7 s_{4,2} - 3 s_{5,1} $ 
 \\  
\hline
 9 
& $ 0 $ 
& $ 0 $ 
& $ 0 $ 
& $ -s_{1,1,1} $ 
& $ -2 s_{1,1,1,1} + 6 s_{2,1,1} + 2 s_{2,2} + 6 s_{3,1} $ 
& $ -5 s_{1,1,1,1,1} - 13 s_{2,1,1,1} - 15 s_{2,2,1} - 16 s_{3,1,1} - 21 s_{3,2} - 20 s_{4,1} - 9 s_{5} $ 
& $ 21 s_{1,1,1,1,1,1} + 80 s_{2,1,1,1,1} + 110 s_{2,2,1,1} + 79 s_{2,2,2} + 117 s_{3,1,1,1} + 169 s_{3,2,1} + 45 s_{3,3} + 78 s_{4,1,1} + 95 s_{4,2} + 35 s_{5,1} + 16 s_{6} $ 
 \\  
\hline
 10 
& $ 0 $ 
& $ 0 $ 
& $ -s_{1,1} - 2 s_{2} $ 
& $ -2 s_{1,1,1} - s_{2,1} + 4 s_{3} $ 
& $ -16 s_{1,1,1,1} - 25 s_{2,1,1} - 23 s_{2,2} - 6 s_{3,1} + 4 s_{4} $ 
& $ 3 s_{1,1,1,1,1} + 97 s_{2,1,1,1} + 92 s_{2,2,1} + 197 s_{3,1,1} + 98 s_{3,2} + 105 s_{4,1} + 5 s_{5} $ 
& $ -59 s_{1,1,1,1,1,1} - 158 s_{2,1,1,1,1} - 492 s_{2,2,1,1} - 241 s_{2,2,2} - 379 s_{3,1,1,1} - 890 s_{3,2,1} - 404 s_{3,3} - 589 s_{4,1,1} - 563 s_{4,2} - 401 s_{5,1} - 94 s_{6} $ 
\\  
\hline
 11 
& $ s_{} $ 
& $ 3 s_{1} $ 
& $ 5 s_{1,1} $ 
& $ -18 s_{1,1,1} - 25 s_{2,1} - 16 s_{3} $ 
& $ -34 s_{1,1,1,1} - 5 s_{2,1,1} + 63 s_{2,2} + 87 s_{3,1} + 59 s_{4} $ 
& $ -190 s_{1,1,1,1,1} - 316 s_{2,1,1,1} - 312 s_{2,2,1} - 134 s_{3,1,1} - 182 s_{3,2} - 50 s_{4,1} - 44 s_{5} $ 
& $ -155 s_{1,1,1,1,1,1} + 325 s_{2,1,1,1,1} + 834 s_{2,2,1,1} + 355 s_{2,2,2} + 1794 s_{3,1,1,1} + 1920 s_{3,2,1} + 369 s_{3,3} + 1727 s_{4,1,1} + 745 s_{4,2} + 254 s_{5,1} - 166 s_{6} $ 
 \\  
\hline
 12 
& $ -s_{} $ 
& $ 2 s_{1} $ 
& $ 15 s_{1,1} - 10 s_{2} $ 
& $ 49 s_{1,1,1} - 52 s_{2,1} - 75 s_{3} $ 
& $ 39 s_{1,1,1,1} - 214 s_{2,1,1} + 27 s_{2,2} - 189 s_{3,1} + 64 s_{4} $ 
& $ -185 s_{1,1,1,1,1} - 849 s_{2,1,1,1} - 404 s_{2,2,1} - 673 s_{3,1,1} + 274 s_{3,2} + 517 s_{4,1} + 513 s_{5} $ 
& $ -502 s_{1,1,1,1,1,1} - 430 s_{2,1,1,1,1} + 1532 s_{2,2,1,1} - 417 s_{2,2,2} + 2696 s_{3,1,1,1} + 4261 s_{3,2,1} + 1746 s_{3,3} + 4836 s_{4,1,1} + 2043 s_{4,2} + 1989 s_{5,1} - 210 s_{6} $ 
 \\  
\hline
 13 
& $ 0 $ 
& $ 21 s_{1} $ 
& $ 67 s_{1,1} + 77 s_{2} $ 
& $ 281 s_{1,1,1} + 216 s_{2,1} - 118 s_{3} $ 
& $ 260 s_{1,1,1,1} - 535 s_{2,1,1} - 440 s_{2,2} - 1666 s_{3,1} - 881 s_{4} $ 
& $ 1256 s_{1,1,1,1,1} + 1189 s_{2,1,1,1} + 3723 s_{2,2,1} + 643 s_{3,1,1} + 4378 s_{3,2} + 2706 s_{4,1} + 1991 s_{5} $ 
& $ -1246 s_{1,1,1,1,1,1} - 12768 s_{2,1,1,1,1} - 17207 s_{2,2,1,1} - 10299 s_{2,2,2} - 25562 s_{3,1,1,1} - 28781 s_{3,2,1} - 3794 s_{3,3} - 17334 s_{4,1,1} - 11853 s_{4,2} - 2448 s_{5,1} + 858 s_{6} $ 
 \\  
\hline
 14 
& $ -18 s_{} $ 
& $ -68 s_{1} $ 
& $ s_{1,1} + 230 s_{2} $ 
& $ 262 s_{1,1,1} + 413 s_{2,1} + 320 s_{3} $ 
& $ 2093 s_{1,1,1,1} + 3472 s_{2,1,1} - 285 s_{2,2} - 263 s_{3,1} - 1716 s_{4} $ 
& $ 3186 s_{1,1,1,1,1} - 439 s_{2,1,1,1} - 3825 s_{2,2,1} - 12764 s_{3,1,1} - 9856 s_{3,2} - 12117 s_{4,1} - 2934 s_{5} $ 
& $ 9627 s_{1,1,1,1,1,1} + 12953 s_{2,1,1,1,1} + 18312 s_{2,2,1,1} + 20329 s_{2,2,2} - 8129 s_{3,1,1,1} + 31460 s_{3,2,1} + 20906 s_{3,3} + 748 s_{4,1,1} + 39137 s_{4,2} + 28585 s_{5,1} + 16320 s_{6} $ 
 \\  
\hline
 15 
& $ 3 s_{} $ 
& $ -154 s_{1} $ 
& $ -399 s_{1,1} + 115 s_{2} $ 
& $ -1170 s_{1,1,1} + 1585 s_{2,1} + 2427 s_{3} $ 
& $ 1113 s_{1,1,1,1} + 9825 s_{2,1,1} + 2144 s_{2,2} + 8997 s_{3,1} + 277 s_{4} $ 
& $ 400 s_{1,1,1,1,1} + 3360 s_{2,1,1,1} - 20535 s_{2,2,1} - 18142 s_{3,1,1} - 45101 s_{3,2} - 45588 s_{4,1} - 24448 s_{5} $ 
& $ 32291 s_{1,1,1,1,1,1} + 113929 s_{2,1,1,1,1} + 150606 s_{2,2,1,1} + 118739 s_{2,2,2} + 137966 s_{3,1,1,1} + 234417 s_{3,2,1} + 63063 s_{3,3} + 85621 s_{4,1,1} + 143486 s_{4,2} + 57037 s_{5,1} + 27655 s_{6} $ 
 \\  
\hline
 16 
& $ 26 s_{} $ 
& $ -315 s_{1} $ 
& $ -1842 s_{1,1} - 2196 s_{2} $ 
& $ -3467 s_{1,1,1} + 1192 s_{2,1} + 5174 s_{3} $ 
& $ -14640 s_{1,1,1,1} - 16387 s_{2,1,1} - 5349 s_{2,2} + 10378 s_{3,1} + 11945 s_{4} $ 
& $ 5055 s_{1,1,1,1,1} + 108419 s_{2,1,1,1} + 89528 s_{2,2,1} + 190609 s_{3,1,1} + 74642 s_{3,2} + 75807 s_{4,1} - 11394 s_{5} $ 
& $ -54427 s_{1,1,1,1,1,1} - 151826 s_{2,1,1,1,1} - 427124 s_{2,2,1,1} - 254844 s_{2,2,2} - 347621 s_{3,1,1,1} - 926374 s_{3,2,1} - 428737 s_{3,3} - 607974 s_{4,1,1} - 710097 s_{4,2} - 512889 s_{5,1} - 155073 s_{6} $ 
 \\  
\hline
 17 
& $ 252 s_{} $ 
& $ 865 s_{1} $ 
& $ -154 s_{1,1} - 4000 s_{2} $ 
& $ -6619 s_{1,1,1} - 14154 s_{2,1} - 9302 s_{3} $ 
& $ -33963 s_{1,1,1,1} - 52219 s_{2,1,1} + 14136 s_{2,2} + 22689 s_{3,1} + 41601 s_{4} $ 
& $ -52790 s_{1,1,1,1,1} + 23275 s_{2,1,1,1} + 109714 s_{2,2,1} + 281787 s_{3,1,1} + 247195 s_{3,2} + 291020 s_{4,1} + 84810 s_{5} $ 
& $ -231884 s_{1,1,1,1,1,1} - 503062 s_{2,1,1,1,1} - 857241 s_{2,2,1,1} - 740662 s_{2,2,2} - 398802 s_{3,1,1,1} - 1677250 s_{3,2,1} - 791468 s_{3,3} - 676014 s_{4,1,1} - 1498807 s_{4,2} - 996016 s_{5,1} - 446949 s_{6} $ 
 \\  
\hline
 18 
& $ 35 s_{} $ 
& $ 2229 s_{1} $ 
& $ 7826 s_{1,1} + 1696 s_{2} $ 
& $ -1832 s_{1,1,1} - 62681 s_{2,1} - 58260 s_{3} $ 
& $ 50121 s_{1,1,1,1} + 76108 s_{2,1,1} + 118328 s_{2,2} + 97675 s_{3,1} + 72055 s_{4} $ 
& $ -329090 s_{1,1,1,1,1} - 1371608 s_{2,1,1,1} - 1285571 s_{2,2,1} - 1663736 s_{3,1,1} - 853232 s_{3,2} - 515358 s_{4,1} + 105597 s_{5} $ 
& $ 445297 s_{1,1,1,1,1,1} + 3353804 s_{2,1,1,1,1} + 7326523 s_{2,2,1,1} + 3618568 s_{2,2,2} + 8859122 s_{3,1,1,1} + 14402303 s_{3,2,1} + 4800221 s_{3,3} + 10298183 s_{4,1,1} + 8331411 s_{4,2} + 5152956 s_{5,1} + 805797 s_{6} $ 
 \\  
 \hline
\end{tabular}
    }
    \caption{Contribution of the second term of Theorem \ref{thm:ec17} to the equivariant Euler characteristics $\chi_{\ss_n}(\mathrm{gr}_{17,0}H_c^\bullet(\M_{g,n} ))$ for various $(g,n)$.}
    \label{fig:ec table2}
\end{figure}

Furthermore, Proposition \ref{prop:lower bound} implies:
\begin{cor}
We have that $\gr_{17,0}H_c^{\bullet}(\M_{g,n})=0$ as long as 
$3g+2n+\min\{1,g-2\}<34$. If $3g+2n<37$, then we have 
\[
\gr_{17,0}H_c^{k}(\M_{g,n})\cong H^{k-17}(G_{2^7}(g-2,n)).
\]
\end{cor}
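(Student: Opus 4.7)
Proof plan: The starting point is the isomorphism
\[
\gr_{17,0}H_c^{k}(\M_{g,n}) \cong H^{k-17}(G_{\tilde 1^{17}}(g-1,n)) \oplus H^{k-17}(G_{2^7}(g-2,n))
\]
recorded at the beginning of Section \ref{sec:app}, which itself comes from Remark \ref{rem:GC relation}. The plan is to bound each of the two summands separately by applying Proposition \ref{prop:lower bound}, and then combine the bounds.

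For the second summand this is immediate. Take $\lambda = 2^7$, a Young diagram with $N=14$ boxes; by the example following the definition of $r_\lambda$ for two-column diagrams, $r_{2^7}=1$. Proposition \ref{prop:lower bound} therefore yields $H^\bullet(G_{2^7}(g-2,n))=0$ whenever
\[
3(g-2) + 2n + \min\{1,g-2\} < 28,
\]
which is exactly the condition $3g + 2n + \min\{1,g-2\} < 34$ appearing in the corollary.

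For the first summand one cannot apply Proposition \ref{prop:lower bound} verbatim, since $\tilde 1^{17}$ is not a Young diagram. The key observation is that by definition $\tilde C_{1^{17}}$ is a quotient of $C_{1^{17}}$ and the functor $M \mapsto G_M$ is exact (Remark \ref{rem:GM exact}), so $G_{\tilde 1^{17}}$ is a quotient of $G_{1^{17}}$. Now $\lambda=1^{17}$ satisfies $r_{1^{17}}=0$; inspecting the proof of Proposition \ref{prop:lower bound}, the excess count in the case $r_\lambda=0$ actually shows that there are no admissible graphs at all once the total excess is negative, so $G_{1^{17}}(g-1,n)$ is literally the zero complex as soon as $3(g-1)+2n < 34$, i.e. $3g+2n<37$. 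Its quotient $G_{\tilde 1^{17}}(g-1,n)$ then also vanishes.

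Combining, whenever $3g+2n+\min\{1,g-2\}<34$ we also have $3g+2n<37$ (check in each of the cases $g\leq 2$ and $g\geq 3$), so both summands vanish and the first assertion follows; while whenever $3g+2n<37$ the $\tilde 1^{17}$-summand vanishes, giving the claimed isomorphism with the $G_{2^7}$-summand. The only delicate point is the bootstrap from $G_{1^{17}}$ to $G_{\tilde 1^{17}}$, and this is what forces me to use that the excess argument with $r_\lambda=0$ produces vanishing of the chain complex itself, not merely of its cohomology.
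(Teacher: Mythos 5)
Your argument is correct and follows the paper's (very terse) route: the paper simply asserts that the corollary follows from Proposition \ref{prop:lower bound}, applied to the two summands of the decomposition from Remark \ref{rem:GC relation}, and your treatment of the $G_{2^7}$ summand and the final bookkeeping of the inequalities match exactly. The one place where you had to supply something the paper leaves implicit is the $G_{\tilde 1^{17}}$ summand, and your fix is valid but worth two comments. First, a small imprecision: when the total excess $3(g-1)+2n-34$ is negative, it is not that ``there are no admissible graphs''---the graphs exist, but each must contain an $\omega$--$\omega$ component (the unique component of negative excess), and the transposition of its two $\omega$-legs is an automorphism acting by $-1$ on the $V_{1^{17}}$-decoration and trivially on $\det E(\Gamma)$, so every generator vanishes in the (co)invariants; that is what makes $G_{1^{17}}(g-1,n)$ the zero complex, and hence its quotient $G_{\tilde 1^{17}}(g-1,n)$ as well. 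Second, there is an alternative that stays entirely at the level of cohomology and avoids reopening the proof of Proposition \ref{prop:lower bound}: resolve $\tilde C_{1^{17}}$ by \eqref{equ:tilde C fin res} and use the spectral sequence \eqref{equ:G 1N sseq}, whose $E^1$-terms $H^\bullet(G_{1^{17+p}})(g-1,n)$ all vanish for $p\geq 0$ once $3(g-1)+2n<34$, since the threshold $2N=34+2p$ only increases with $p$. Either way the conclusion stands.
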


Finally, from Theorem \ref{thm:n0 cohom}, Proposition \ref{prop:n0 tilde 1k}, and a numerical computation using the available data on $W_0H_c^\bullet(\M_{g,n})$ from the literature we obtain:
\begin{cor} \label{lowg17}
    The Hodge weight $(17,0)$ cohomology of $\M_g$ for $11 \leq g \leq 13$ is given by 
    \[
    \begin{aligned}
        \gr_{17,0}H_c^{k}(\M_{11}) &=
        \begin{cases}
            \cc &\text{if $k=30$} \\
            0 & \text{otherwise}
        \end{cases}
        \\
        \gr_{17,0}H_c^{k}(\M_{12}) &=
        \begin{cases}
            \cc &\text{if $k=31$} \\
            0 & \text{otherwise}
        \end{cases}
        \end{aligned} \quad \quad \quad \quad 
        \begin{aligned}
        \gr_{17,0}H_c^{k}(\M_{13}) &=
        \begin{cases}
            \cc &\text{if $k=34$} \\
            \cc^7 &\text{if $k=35$} \\
            \cc^7 &\text{if $k=36$} \\
            0 & \text{otherwise.}
        \end{cases}
    \end{aligned}
    \]
\end{cor}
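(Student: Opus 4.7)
The plan is to apply the Feynman-transform decomposition of Remark \ref{rem:GC relation} at $n = 0$, giving
\[
\gr_{17,0} H_c^k(\M_g) \cong H^{k-17}(G_{\tilde 1^{17}}(g-1, 0)) \oplus H^{k-17}(G_{2^7}(g-2, 0)),
\]
and to analyze the two summands separately for $g = 11, 12, 13$ using the tools developed in Section \ref{sec:FA}.

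The first summand is disposed of immediately by Proposition \ref{prop:n0 tilde 1k}. Since $3(g-1) \in \{30, 33, 36\}$ and $2\cdot 17 = 34$, the cohomology vanishes for $g \in \{11, 12\}$, and for $g = 13$ we have $g-1 = 12 = \lceil 2\cdot 17/3\rceil$, so the cohomology is one-dimensional concentrated in degree $17$, contributing a single $\cc$ to $\gr_{17,0}H_c^{34}(\M_{13})$.

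For the second summand, I would invoke Theorem \ref{thm:n0 cohom} with $\lambda = (2^7)$ and $N = 14$ to identify $H^\bullet(G_{2^7}(g-2, 0)) \cong M'(g-2, 14)\otimes_{\ss_{14}} V_{2^7}$. Since $r_{2^7} = 1$, the excess bound of Proposition \ref{prop:lower bound} and the subsequent remark show that only blown-up components of excess at most $3(g-2) - 27 \in \{0, 3, 6\}$ contribute, and all of these fall within the range $\leq 12$ in which $W_0 H_c^\bullet(\M_{g,n})$ is tabulated in \cite{BrunWillwacher, BCGH}. For $g = 11$ we are at the threshold of Proposition \ref{prop:lower bound}, so the cohomology is concentrated in degree $13$ and supported on the unique graph consisting of one $\omega$-$\omega$ edge and four tripods; computing the $V_{2^7}$-isotypical piece of the corresponding induced $\ss_{14}$-representation yields a single $\cc$, hence $\gr_{17,0}H_c^{30}(\M_{11}) = \cc$. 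For $g = 12$ and $g = 13$, I would enumerate the finitely many configurations of blown-up components of excess $\leq 3$ and $\leq 6$, plug in the known values of $W_0 H_c^\bullet$, tensor with $V_{2^7}$ over $\ss_{14}$, and analyze the residual vertex-splitting differential on $\widehat{M}'(g-2, 14)$.

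The main obstacle is the $g = 13$ case, in which the two nonzero groups of dimension $7$ in degrees $35$ and $36$ must each be identified individually rather than merely through their signed difference. Throughout, the equivariant Euler-characteristic formula of Theorem \ref{thm:ec17} serves as a consistency check: the $(g,0)$ entries of Figures \ref{fig:ec table1} and \ref{fig:ec table2} sum to $1, -1, 1$ for $g = 11, 12, 13$, matching $\dim \cc = 1$, $-\dim \cc = -1$, and $1 - 7 + 7 = 1$, respectively, which both guides the computation and rules out arithmetic errors in the enumeration of excess-contributing graphs.
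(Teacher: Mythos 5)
Your proposal follows essentially the same route as the paper: decompose $\gr_{17,0}H_c^\bullet(\M_g)$ via Remark \ref{rem:GC relation}, kill or evaluate the $G_{\tilde 1^{17}}(g-1,0)$ summand with Proposition \ref{prop:n0 tilde 1k}, and compute $H^\bullet(G_{2^7}(g-2,0))$ from Theorem \ref{thm:n0 cohom} together with the tabulated $W_0H_c^\bullet(\M_{h,m})$ in the admissible excess range, checking against the Euler characteristics of Theorem \ref{thm:ec17}. The only slight imprecision is the mention of a ``residual vertex-splitting differential'': Theorem \ref{thm:n0 cohom} already expresses the answer as the closed formula $M'(g-2,14)\otimes_{\ss_{14}}V_{2^7}$ built from the cohomology groups $W_0H_c^\bullet(\M_{h,m})$, so once those are substituted no further differential needs to be taken.
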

\noindent For $g=13$ the contribution in degree $34$ comes from $G_{\tilde 1^{17}}$. All other displayed contributions come from $G_{2^7}$. For example, the generator in genus $g=11$ corresponds to the graph 
\[
\begin{tikzpicture}
\node[int] (v1) at (0,.4) {};
\node (v2) at (-.4,-.4) {$\omega$};
\node (v3) at (0,-.4) {$\omega$};
\node (v4) at (0.4,-.4) {$\omega$};
\draw (v1) edge (v2) edge (v3) edge (v4);
\end{tikzpicture}
\begin{tikzpicture}
\node[int] (v1) at (0,.4) {};
\node (v2) at (-.4,-.4) {$\omega$};
\node (v3) at (0,-.4) {$\omega$};
\node (v4) at (0.4,-.4) {$\omega$};
\draw (v1) edge (v2) edge (v3) edge (v4);
\end{tikzpicture}
\begin{tikzpicture}
\node[int] (v1) at (0,.4) {};
\node (v2) at (-.4,-.4) {$\omega$};
\node (v3) at (0,-.4) {$\omega$};
\node (v4) at (0.4,-.4) {$\omega$};
\draw (v1) edge (v2) edge (v3) edge (v4);
\end{tikzpicture}
\begin{tikzpicture}
\node[int] (v1) at (0,.4) {};
\node (v2) at (-.4,-.4) {$\omega$};
\node (v3) at (0,-.4) {$\omega$};
\node (v4) at (0.4,-.4) {$\omega$};
\draw (v1) edge (v2) edge (v3) edge (v4);
\end{tikzpicture}
  \begin{tikzpicture}
    \node (v) at (0,0) {$\omega$};
    \node (w) at (1,0) {$\omega$};
    \draw (v) edge (w);
  \end{tikzpicture},
\]
with the decoration in $V_{2^7}$ suppressed from the notation.

\subsection{Hodge weight (19,0) cohomology of $\M_{g,n}$}
Conditional on the conjectured vanishing $H^{19,0}(\Mb_{3,15})=0$ we have 
\[
\gr_{19,0}H_c^{k}(\M_{g,n})
\cong 
H^{k-19}(G_{\tilde 1^{19}}(g-1,n))
\oplus H^{k-19}(G_{2^51^6}(g-2,n)).
\]
We hence obtain conditional results on $\gr_{19,0}H_c^{k}(\M_{g,n})$ by our general results on the graph cohomology, analogous to those in the weight (17,0) case above.

Corollaries \ref{cor:EC1} and \ref{cor:EC2} immediately yield:
\begin{thm} \label{thm:ec19}
    If $H^{19,0}(\Mb_{3,15})=0$ then the $\ss_n$-equivariant Euler characteristic of the Hodge weight $(19,0)$ compactly supported cohomology of $\M_{g,n}$ satisfies 
    \begin{multline*}
\sum_{g,n\geq 0} u^{g+n} \chi_{\ss_n}(\mathrm{gr}_{19,0}H_c^\bullet(\M_{g,n}) )
  \\=
\   -u T_{\leq 18}\bigg(
  \prod_{\ell\geq 1} 
  \frac { 
    U_\ell(\frac 1 \ell \sum_{d\mid \ell} \mu(\ell/d) (-p_d  +1-w^d ), u )
  }
  { 
    U_\ell(\frac 1 \ell \sum_{d\mid \ell} \mu(\ell/d) (-p_d), u )
  }-1\bigg)
  \\
  \  -u^2(T_{w_1^{11}w_2^5} -T_{w_1^{12}w_2^{4}})\bigg(
  \prod_{\ell\geq 1} 
  \frac { 
    U_\ell(\frac 1 \ell \sum_{d\mid \ell} \mu(\ell/d) (-p_d  +1-w_1^d-w_2^d ), u )
  }
  { 
    U_\ell(\frac 1 \ell \sum_{d\mid \ell} \mu(\ell/d) (-p_d), u )
  }-1\bigg) \, .
\end{multline*}
\end{thm}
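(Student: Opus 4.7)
The plan is to follow verbatim the argument used for Theorem~\ref{thm:ec17}, replacing the two ``building block'' simple $\FA$-modules of weight $17$ by those of weight $19$. The heart of the argument is already in place: the conditional hypothesis $H^{19,0}(\Mb_{3,15})=0$ is exactly what Theorem~\ref{thm:19} needs in order to identify the $\FA$-module $H^{19,0}(\Mb_{g,*})$ with $\tilde C_{1^{19}}$ for $g=1$, with $C_{2^51^6}$ for $g=2$, and with $0$ otherwise. First I would invoke this identification together with the modular cooperad $\Q^\bullet$ built in Section~\ref{sec:1719}; the same holomorphic-form Feynman transform calculation that underlies Remark~\ref{rem:GC relation} then gives, for every $(g,n)$ and $k$,
\[
\mathrm{gr}_{19,0}H_c^{k}(\M_{g,n}) \;\cong\; H^{k-19}(G_{\tilde 1^{19}}(g-1,n)) \;\oplus\; H^{k-19}(G_{2^51^6}(g-2,n)).
\]

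Next I would take the $\ss_n$-equivariant Euler characteristic on both sides. The shift $k\mapsto k-19$ contributes an overall sign $(-1)^{19}=-1$, and the genus shifts $g\mapsto g-1$ and $g\mapsto g-2$ in the generating variable $u^{g+n}$ contribute factors of $u$ and $u^2$, respectively. Summing over $g,n$ yields
\[
\sum_{g,n}u^{g+n}\chi_{\ss_n}(\mathrm{gr}_{19,0}H_c^\bullet(\M_{g,n}))
\;=\; -u\sum_{g,n}u^{g+n}\chi_{\ss_n}(G_{\tilde 1^{19}}(g,n))
\;-\; u^2 \sum_{g,n}u^{g+n}\chi_{\ss_n}(G_{2^51^6}(g,n)).
\]

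Finally I would apply Corollary~\ref{cor:EC1} with $a=19$ to the first summand and Corollary~\ref{cor:EC2} with $(k,\ell)=(5,6)$ to the second summand. The sign $(-1)^{a-1}=(-1)^{18}=+1$ combines with the $-u$ in front to produce the first line of the claimed formula, and the coefficient extractor $T_{w_1^{k+\ell}w_2^k}-T_{w_1^{k+\ell+1}w_2^{k-1}}$ specializes to $T_{w_1^{11}w_2^5}-T_{w_1^{12}w_2^4}$, giving the second line.

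I do not expect any genuine obstacle: every ingredient is in place. The only points worth checking carefully are bookkeeping, namely the two sign/genus shifts (the overall minus sign from shifting cohomological degree by $19$ rather than $17$, and the correct powers of $u$ coming from the genus reindexing in the Feynman transform identification). All remaining work is a direct substitution into the generating function identities of Corollaries~\ref{cor:EC1} and~\ref{cor:EC2}.
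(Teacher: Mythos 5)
Your proposal is correct and follows essentially the same route as the paper: the paper likewise deduces the formula immediately from the decomposition $\gr_{19,0}H_c^{k}(\M_{g,n})\cong H^{k-19}(G_{\tilde 1^{19}}(g-1,n))\oplus H^{k-19}(G_{2^51^6}(g-2,n))$ (the weight-19 analogue of Remark~\ref{rem:GC relation}, conditional on the vanishing hypothesis) together with Corollaries~\ref{cor:EC1} and~\ref{cor:EC2}. Your sign and genus-shift bookkeeping ($(-1)^{19}=-1$ from the degree shift, factors $u$ and $u^2$ from the genus reindexing, and the specialization $(k,\ell)=(5,6)$) is exactly what the paper leaves implicit.
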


From Proposition \ref{prop:lower bound} we obtain:
\begin{cor}
If $H^{19,0}(\Mb_{3,15})=0$ then we have that $\gr_{19,0}H_c^{\bullet}(\M_{g,n})=0$ as long as 
$3g+2n+\min\{1,g-2\}<38$. If $3g+2n<41$, then we have 
\[
\gr_{19,0}H_c^{k}(\M_{g,n})\cong H^{k-19}(G_{2^51^6}(g-2,n)).
\]
\end{cor}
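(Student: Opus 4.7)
The plan is to mirror the proof of the analogous unconditional corollary in Hodge weight $(17,0)$ stated earlier in this section.

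First, using the conditional description of $H^{19,0}(\Mb_{g,*})$ from Theorem~\ref{thm:19} together with Remark~\ref{rem:GC relation}, we obtain
\[
\gr_{19,0}H_c^k(\M_{g,n})\;\cong\;H^{k-19}(G_{\tilde 1^{19}}(g-1,n))\,\oplus\,H^{k-19}(G_{2^5 1^6}(g-2,n)),
\]
so the corollary reduces to vanishing bounds for each of these two graph-complex cohomologies.

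For the $G_{2^5 1^6}$ summand, I apply Proposition~\ref{prop:lower bound} directly. The diagram $\lambda=2^5 1^6$ has $N=16$ boxes and $\lambda_1=2$, so the example of two-column partitions preceding Proposition~\ref{prop:lower bound} yields $r_\lambda=1$. The proposition then gives the vanishing $G_{2^5 1^6}(g-2,n)=0$ whenever $3(g-2)+2n+\min\{1,g-2\}<32$, equivalently $3g+2n+\min\{1,g-2\}<38$.

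For the $G_{\tilde 1^{19}}$ summand, I will use the surjection of $\FA$-modules $C_{1^{19}}\twoheadrightarrow\tilde C_{1^{19}}$, which by exactness of the functor $M\mapsto G_M(g,n)$ (Remark~\ref{rem:GM exact}) descends to a surjection $G_{1^{19}}(g-1,n)\twoheadrightarrow G_{\tilde 1^{19}}(g-1,n)$. Since $r_{1^{19}}=0$ by the single-column example preceding Proposition~\ref{prop:lower bound}, that proposition applied with $N=19$ yields $G_{1^{19}}(g-1,n)=0$ whenever $3(g-1)+2n<38$, i.e.\ $3g+2n<41$; the quotient $G_{\tilde 1^{19}}(g-1,n)$ vanishes in the same range.

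The two bounds combine straightforwardly. A routine case check shows that $3g+2n+\min\{1,g-2\}<38$ implies $3g+2n<40<41$ in every case (the worst case is $g=0$, where $\min\{1,g-2\}=-2$), so both summands vanish under the first hypothesis, giving the first assertion. For the second, the weaker hypothesis $3g+2n<41$ already kills the $G_{\tilde 1^{19}}$ summand, leaving the stated isomorphism $\gr_{19,0}H_c^k(\M_{g,n})\cong H^{k-19}(G_{2^5 1^6}(g-2,n))$. No substantive obstacle is anticipated; the entire argument is parallel to its $(17,0)$ analogue, with the only non-trivial observation being the passage from $G_{1^{19}}$ to its quotient $G_{\tilde 1^{19}}$ via Remark~\ref{rem:GM exact}.
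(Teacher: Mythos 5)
Your proof is correct and follows essentially the same route as the paper, which derives this corollary directly from Proposition \ref{prop:lower bound} applied to the two summands of the decomposition $\gr_{19,0}H_c^{k}(\M_{g,n})\cong H^{k-19}(G_{\tilde 1^{19}}(g-1,n))\oplus H^{k-19}(G_{2^51^6}(g-2,n))$, exactly as you do. The one point worth making explicit is that for the $G_{\tilde 1^{19}}$ summand you need the vanishing of the \emph{complex} $G_{1^{19}}(g-1,n)$ --- which the excess argument in the proof of Proposition \ref{prop:lower bound} does give, since no nonzero graphs exist when $3(g-1)+2n<38$ and $r_{1^{19}}=0$ --- not merely of its cohomology, as a surjection from an acyclic complex would not by itself force the quotient to be acyclic.
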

Finally, from Theorem \ref{thm:n0 cohom}, Proposition \ref{prop:n0 tilde 1k},  and a numerical computation using the available data on $W_0H_c^\bullet(\M_{g,n})$ from the literature we obtain:
\begin{cor} \label{lowg19}
    If $H^{19,0}(\Mb_{3,15})=0$ then the Hodge weight $(19,0)$ cohomology of $\M_g$ for $g=13,14$ is given by 
    \[
    \begin{aligned}
        \gr_{19,0}H_c^{k}(\M_{13}) &=
        \begin{cases}
            \cc^3 &\text{if $k=35$} \\
            \cc &\text{if $k=36$} \\
            0 & \text{otherwise}
        \end{cases}
        \quad \quad \quad \quad 
        \gr_{19,0}H_c^{k}(\M_{14}) &=
        \begin{cases}
            \cc^3 &\text{if $k=36$} \\
            \cc &\text{if $k=37$} \\
            \cc^9 &\text{if $k=38$} \\
            \cc^{19} &\text{if $k=39$} \\
            0 & \text{otherwise.}
        \end{cases}
    \end{aligned}
    \]
\end{cor}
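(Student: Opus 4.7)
The starting point is the isomorphism (valid under the hypothesis $H^{19,0}(\Mb_{3,15})=0$ that already powers Theorem \ref{thm:19})
\[
\gr_{19,0}H_c^{k}(\M_g) \cong H^{k-19}\bigl(G_{\tilde 1^{19}}(g-1,0)\bigr) \oplus H^{k-19}\bigl(G_{2^5 1^6}(g-2,0)\bigr),
\]
which is exactly the content of the displayed formula preceding the corollary, obtained by combining Theorem \ref{thm:19} with Remark \ref{rem:GC relation}. The plan is to compute the two summands separately for $g=13, 14$.

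The first summand is handled entirely by Proposition \ref{prop:n0 tilde 1k}. For $g=13$ we have $3\cdot 12 = 36 < 38 = 2\cdot 19$, so $H^\bullet(G_{\tilde 1^{19}}(12,0)) = 0$ and this summand contributes nothing. For $g=14$ we have $13 = \lceil 2\cdot 19/3\rceil$, so the same proposition identifies $H^\bullet(G_{\tilde 1^{19}}(13,0))$ as $\cc$ concentrated in cohomological degree $19$. This accounts for precisely one class in $\gr_{19,0}H_c^{38}(\M_{14})$, and the remaining $8$ classes in that degree must come from the second summand.

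For the $2^5 1^6$ summand, Theorem \ref{thm:n0 cohom} reduces the problem to
\[
H^\bullet\bigl(G_{2^5 1^6}(h,0)\bigr) \cong M'(h,16) \otimes_{\ss_{16}} V_{2^5 1^6}
\]
for $h = g-2 \in \{11,12\}$, where $M'$ is the plethystic exponential (augmented by the factor $1 \oplus V_1[-1]$) built from $W_0 H_c^\bullet(\M)$. The key reduction is via the excess bound of Proposition \ref{prop:lower bound}: since $\lambda = 2^5 1^6$ has $\lambda_1 = 2$ we get $r_\lambda = 1$, so only blown-up components of excess at most $3h + 1 - 2\cdot 16$ can appear, giving excess $\leq 2$ for $h=11$ and $\leq 5$ for $h=12$. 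This means only $W_0 H_c^\bullet(\M_{g',n'})$ with $3(g'-1) + 2n' \leq 5$ enters the computation (for the connected pieces, after accounting for the $n_\epsilon, n_\omega$ contributions to excess), and this data is explicitly tabulated in \cite{BrunWillwacher, BCGH}.

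The main obstacle is the final symmetric-function bookkeeping: one must assemble $M'(h,16)$ as a sum over multisets of low-excess blown-up components (each carrying an $\ss_{n'}$-representation twisted by sign and shifted in degree according to the definition of $G'$), convert to Schur functions, tensor with $V_{2^5 1^6}$, and take coinvariants for the diagonal $\ss_{16}$-action on legs. The computation decomposes naturally by excess profile of the multiset, and the output in each cohomological degree should match the stated dimensions: $3$ in degree $16$ and $1$ in degree $17$ for $h=11$ (giving $\cc^3$ in $k=35$ and $\cc$ in $k=36$ for $\M_{13}$); and $3, 1, 8, 19$ in degrees $17, 18, 19, 20$ for $h=12$ (giving, together with the $+1$ from the $\tilde 1^{19}$ summand in degree $k=38$, the totals $3, 1, 9, 19$ in degrees $36, 37, 38, 39$ for $\M_{14}$).
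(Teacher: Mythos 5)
Your proposal is correct and follows essentially the same route as the paper: decompose $\gr_{19,0}H_c^{k}(\M_g)$ via Remark \ref{rem:GC relation} into the $G_{\tilde 1^{19}}(g-1,0)$ and $G_{2^51^6}(g-2,0)$ summands, dispatch the first with Proposition \ref{prop:n0 tilde 1k} (vanishing for $g=13$, a single class in degree $38$ for $g=14$), and reduce the second to a finite computation with Theorem \ref{thm:n0 cohom}, the excess bound $3g+2n+r_\lambda-2|\lambda|$ with $r_{2^51^6}=1$, and the tabulated values of $W_0H_c^\bullet(\M_{g',n'})$ — which is exactly the paper's (equally unexhibited) numerical computation. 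The only blemish is your inequality $3(g'-1)+2n'\leq 5$ for which components enter; the correct constraint comes from the excess formula $3(h_i-1)+3n_\epsilon+n_\omega$, but since all needed data lies well within the known range $3(g'-1)+n'\leq 12$ this does not affect the argument.
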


\noindent For $g=14$ one class in degree $38$ originates from $G_{\tilde 1^{19}}$, with representative
\[
\begin{tikzpicture}
\node[int] (v1) at (0,.4) {};
\node (v2) at (-.4,-.4) {$\omega$};
\node (v3) at (0,-.4) {$\omega$};
\node (v4) at (0.4,-.4) {$\omega$};
\draw (v1) edge (v2) edge (v3) edge (v4);
\end{tikzpicture}
\begin{tikzpicture}
\node[int] (v1) at (0,.4) {};
\node (v2) at (-.4,-.4) {$\omega$};
\node (v3) at (0,-.4) {$\omega$};
\node (v4) at (0.4,-.4) {$\omega$};
\draw (v1) edge (v2) edge (v3) edge (v4);
\end{tikzpicture}
\begin{tikzpicture}
\node[int] (v1) at (0,.4) {};
\node (v2) at (-.4,-.4) {$\omega$};
\node (v3) at (0,-.4) {$\omega$};
\node (v4) at (0.4,-.4) {$\omega$};
\draw (v1) edge (v2) edge (v3) edge (v4);
\end{tikzpicture}
\begin{tikzpicture}
\node[int] (v1) at (0,.4) {};
\node (v2) at (-.4,-.4) {$\omega$};
\node (v3) at (0,-.4) {$\omega$};
\node (v4) at (0.4,-.4) {$\omega$};
\draw (v1) edge (v2) edge (v3) edge (v4);
\end{tikzpicture}
\begin{tikzpicture}
\node[int] (v1) at (0,.4) {};
\node (v2) at (-.4,-.4) {$\omega$};
\node (v3) at (0,-.4) {$\omega$};
\node (v4) at (0.4,-.4) {$\omega$};
\draw (v1) edge (v2) edge (v3) edge (v4);
\end{tikzpicture}
\begin{tikzpicture}
\node[int] (v1) at (0,.4) {};
\node (v2) at (-.4,-.4) {$\omega$};
\node (v3) at (0,-.4) {$\omega$};
\node (v4) at (0.4,-.4) {$\omega$};
\draw (v1) edge (v2) edge (v3) edge (v4);
\end{tikzpicture}
  \begin{tikzpicture}
    \node (v) at (0,0) {$\omega$};
    \node (w) at (1,0) {$\epsilon$};
    \draw (v) edge (w);
  \end{tikzpicture}.
  \]
All other displayed contributions come from $G_{2^51^6}$.
The classes in genus 13 are represented in the complex $H_{2^51^6,13}$ of the proof of Theorem \ref{thm:n0 cohom} by the following graphs:
\begin{align*}
&\begin{tikzpicture}
\node[int] (v1) at (0,.4) {};
\node (v2) at (-.4,-.4) {$\omega$};
\node (v3) at (0,-.4) {$\omega$};
\node (v4) at (0.4,-.4) {$\omega$};
\draw (v1) edge (v2) edge (v3) edge (v4);
\end{tikzpicture}
\begin{tikzpicture}
\node[int] (v1) at (0,.4) {};
\node (v2) at (-.4,-.4) {$\omega$};
\node (v3) at (0,-.4) {$\omega$};
\node (v4) at (0.4,-.4) {$\omega$};
\draw (v1) edge (v2) edge (v3) edge (v4);
\end{tikzpicture}
\begin{tikzpicture}
\node[int] (v1) at (0,.4) {};
\node (v2) at (-.4,-.4) {$\omega$};
\node (v3) at (0,-.4) {$\omega$};
\node (v4) at (0.4,-.4) {$\omega$};
\draw (v1) edge (v2) edge (v3) edge (v4);
\end{tikzpicture}
\begin{tikzpicture}
\node[int] (v1) at (0,.4) {};
\node (v2) at (-.4,-.4) {$\omega$};
\node (v3) at (0,-.4) {$\omega$};
\node (v4) at (0.4,-.4) {$\omega$};
\draw (v1) edge (v2) edge (v3) edge (v4);
\end{tikzpicture}
\begin{tikzpicture}
\node[int] (v1) at (0,.4) {};
\node (v2) at (-.4,-.4) {$\omega$};
\node (v3) at (0,-.4) {$\omega$};
\node (v4) at (0.4,-.4) {$\omega$};
\draw (v1) edge (v2) edge (v3) edge (v4);
\end{tikzpicture}
  \begin{tikzpicture}
    \node (v) at (0,0) {$\omega$};
    \node (w) at (1,0) {$\epsilon$};
    \draw (v) edge (w);
  \end{tikzpicture}
  & \text{(two classes in degree 35)} \\
  &\begin{tikzpicture}
\node[int] (v1) at (0,.4) {};
\node (v2) at (-.4,-.4) {$\omega$};
\node (v3) at (0,-.4) {$\omega$};
\node (v4) at (0.4,-.4) {$\omega$};
\draw (v1) edge (v2) edge (v3) edge (v4);
\end{tikzpicture}
\begin{tikzpicture}
\node[int] (v1) at (0,.4) {};
\node (v2) at (-.4,-.4) {$\omega$};
\node (v3) at (0,-.4) {$\omega$};
\node (v4) at (0.4,-.4) {$\omega$};
\draw (v1) edge (v2) edge (v3) edge (v4);
\end{tikzpicture}
\begin{tikzpicture}
\node[int] (v1) at (0,.4) {};
\node (v2) at (-.4,-.4) {$\omega$};
\node (v3) at (0,-.4) {$\omega$};
\node (v4) at (0.4,-.4) {$\omega$};
\draw (v1) edge (v2) edge (v3) edge (v4);
\end{tikzpicture}
\begin{tikzpicture}
  \node[int] (i) at (0,.5) {};
  \node[int] (j) at (1,.5) {};
  \node (v1) at (-.5,-.4) {$\omega$};
  \node (v2) at (0,-.4) {$\omega$};
  \node (v3) at (1,-.4) {$\omega$};
  \node (v4) at (1.5,-.4) {$\omega$};
\draw (i) edge (v1) edge (v2) edge (j) (j) edge (v3) edge (v4);
\end{tikzpicture}
  \begin{tikzpicture}
    \node (v) at (0,0) {$\omega$};
    \node (w) at (1,0) {$\omega$};
    \draw (v) edge (w);
  \end{tikzpicture}
    \begin{tikzpicture}
    \node (v) at (0,0) {$\omega$};
    \node (w) at (1,0) {$\epsilon$};
    \draw (v) edge (w);
  \end{tikzpicture}
& \text{(one class in degree 35)} \\
&\begin{tikzpicture}
\node[int] (v1) at (0,.4) {};
\node (v2) at (-.4,-.4) {$\omega$};
\node (v3) at (0,-.4) {$\omega$};
\node (v4) at (0.4,-.4) {$\omega$};
\draw (v1) edge (v2) edge (v3) edge (v4);
\end{tikzpicture}
\begin{tikzpicture}
\node[int] (v1) at (0,.4) {};
\node (v2) at (-.4,-.4) {$\omega$};
\node (v3) at (0,-.4) {$\omega$};
\node (v4) at (0.4,-.4) {$\omega$};
\draw (v1) edge (v2) edge (v3) edge (v4);
\end{tikzpicture}
\begin{tikzpicture}
\node[int] (v1) at (0,.4) {};
\node (v2) at (-.4,-.4) {$\omega$};
\node (v3) at (0,-.4) {$\omega$};
\node (v4) at (0.4,-.4) {$\omega$};
\draw (v1) edge (v2) edge (v3) edge (v4);
\end{tikzpicture}
\begin{tikzpicture}
\node[int] (v1) at (0,.4) {};
\node (v2) at (-.4,-.4) {$\omega$};
\node (v3) at (0,-.4) {$\omega$};
\node (v4) at (0.4,-.4) {$\omega$};
\draw (v1) edge (v2) edge (v3) edge (v4);
\end{tikzpicture}
\begin{tikzpicture}
  \node[int] (i) at (0,.5) {};
  \node[int] (j) at (1,.5) {};
  \node (v1) at (-.5,-.4) {$\omega$};
  \node (v2) at (0,-.4) {$\omega$};
  \node (v3) at (1,-.4) {$\omega$};
  \node (v4) at (1.5,-.4) {$\omega$};
\draw (i) edge (v1) edge (v2) edge (j) (j) edge (v3) edge (v4);
\end{tikzpicture}
& \text{(one class in degree 36).} \\
\end{align*}

Our methods also give an unconditional proof of the following weaker statement.
\begin{cor}
    There is an injective map 
    \[
    H^{3g+n-16}(G_{2^51^6}) \to \gr_{19,0} H_c^{3g-3+n}(\M_{g,n}).
    \]
    In particular, we have that $\mathrm{dim}\ \gr_{19,0} H_c^{36}(\M_{13})\geq 1$ and $\mathrm{dim}\  \gr_{19,0} H_c^{39}(\M_{14})\geq 19$.
\end{cor}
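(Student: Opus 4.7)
The plan is to factor the claimed map as the composition
\[
H^{3g+n-22}(G_{2^51^6}(g-2,n))
\hookrightarrow H^{3g-3+n}(\F\Q^{19}(g,n))
\to H^{3g-3+n}(\mathsf{GK}^{19,0}_{g,n}) = \gr_{19,0} H^{3g-3+n}_c(\M_{g,n}),
\]
where the first arrow is the split inclusion onto a direct summand provided by Remark~\ref{rem:GC relation} and Lemma~\ref{lem:G vs hat G}, and the second is induced by the inclusion of modular sub-cooperads $\Q^\bullet \subset H^{\bullet,0}(\Mb)$ of Section~\ref{sec:1719}. The first map is automatically injective, so the content lies in proving that the second is an isomorphism in top degree.

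I would prove the stronger statement that $\F\Q^{19}(g,n)$ and $\mathsf{GK}^{19,0}_{g,n}$ already agree as cochain groups in the two top cohomological degrees $3g-3+n$ and $3g-4+n$. Since $H^{p,0}(\Mb_{h,r})=0$ for $1\leq p\leq 9$ by \cite{BergstromFaberPayne,CanningLarsonPayne}, every generator of $\mathsf{GK}^{19,0}_{g,n}$ is a stable graph with exactly one ``special'' vertex of some type $(g_v,r)$ carrying a class in $H^{19,0}(\Mb_{g_v,r})$, with all other vertices trivially decorated. When $g_v\in\{1,2\}$, Corollaries~\ref{cor:FA genus 1} and~\ref{cor:FA genus 2} identify $\Q^{19}(g_v,r)$ with $H^{19,0}(\Mb_{g_v,r})$, so such generators already lie in $\F\Q^{19}$. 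It therefore suffices to rule out $g_v\geq 3$ in the top two degrees.

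For $g_v\geq 3$, rational connectedness of $\Mb_{g_v,r}$ for $2g_v-2+r\leq 19$, $(g_v,r)\neq (3,15)$ \cite{BallicoCasnatiFontanari,Logan} forces $H^{19,0}(\Mb_{g_v,r})=0$ in that range, so a nontrivial decoration requires $r\geq 15$ when $g_v=3$ and $r\geq 22-2g_v$ when $g_v\geq 4$. Making all non-special vertices trivalent of genus $0$, the valence identity $\sum_v \mathrm{val}(v)=2e+n$ combined with the genus formula yields $e = 3g+n-3g_v-r$, hence $e\leq 3g+n-24$ in every case with $g_v\geq 3$. The cohomological degree $19+e$ is then at most $3g+n-5$, excluding $g_v\geq 3$ from the top two degrees. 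This establishes the equality of cochain groups and so the desired injection on top cohomology.

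The specific lower bounds for $\dim\gr_{19,0}H^{36}_c(\M_{13})$ and $\dim\gr_{19,0}H^{39}_c(\M_{14})$ then reduce to computing the top cohomology of $G_{2^51^6}(g-2,0)$ for $g=13,14$. I would do this as in Section~\ref{sec:FA}, passing to the blown-up picture, enumerating the small number of candidate top-degree blown-up components, and analyzing their differentials (cf.\ the explicit representatives discussed after Corollary~\ref{lowg19}). The main obstacle is the valence/edge count that rules out $g_v\geq 3$ in top degree; the extraction of the stated dimensions is more detailed but essentially bookkeeping.
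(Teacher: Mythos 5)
Your proof is correct and follows essentially the same route as the paper's: both come down to the edge/valence count showing that a weight-$19$ generator whose special vertex has genus $\geq 3$ lives in cohomological degree at most $3g+n-5$, because rational connectedness forces $H^{19,0}(\Mb_{3,r})=0$ for $r\leq 14$. The only difference is packaging — the paper argues that the top-degree classes of $G_{2^51^6}$ (special vertex of valence $16$) cannot be coboundaries of graphs with a genus-$3$ special vertex, which would need a decoration in $H^{19,0}(\Mb_{3,14})=0$, while you phrase the same counting as an agreement of the cochain groups of $\F\Q^{19}(g,n)$ and $\mathsf{GK}^{19,0}_{g,n}$ in the top two degrees.
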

\begin{proof}
    Recall that $\gr_{19,0} H_c^{k}(\M_{g,n})=H^k(\GK^{19,0}_{g,n})$ is computed by the Getzler-Kapranov complex, i.e., the Feynman transform of $H(\Mb)$. Our graph complex $G_{2^51^6}$ is quasi-isomorphic to the part of $\GK^{19,0}_{g,n}$ spanned by graphs for which the special vertex has genus 2. To produce the injection claimed in the Corollary, it is sufficient to argue that the classes from $H^{3g+n-16}(G_{2^51^6})$ cannot be in the image of the differential acting on a graph in $\GK^{19,0}_{g,n}$ with special vertex of genus 3. However, as the proof of Lemma \ref{lem:upper deg bound} shows, all graphs in the top degree (i.e. degree $3g+n-16$-)part of $G_{2^51^6}$ are such that the special vertex has valence 16.
    Hence in order for these to be in the image of the differential of a graph (or linear combination thereof) $\Gamma$ with special vertex of genus 3, the special vertex in $\Gamma$ must have valence 14. It hence has decoration in $H^{19,0}(\Mb_{3,14})$, which vanishes, so that the result follows.
\end{proof}

\section{Odd cohomology when \texorpdfstring{$g\geq 16$}{g16}} \label{sec:odd}

In this section, we prove Theorem \ref{thm:odd}. We first reduce to the case $g=16$ and $n=0$.

\begin{lem}\label{lem:increaseg}
If $H^k(\Mb_g)\neq 0$ then $H^{k+2(h-g)}(\Mb_{h,n})\neq 0$ for all $h\geq g\geq 2$.

\end{lem}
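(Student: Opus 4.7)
The plan is to reduce to $n=0$, then induct on $h-g$, gaining cohomological degree $2$ at each step by attaching a single elliptic tail. First, for fixed $h \geq 2$, the forgetful map $p \colon \Mb_{h,n} \to \Mb_h$ is a smooth proper morphism of Deligne--Mumford stacks, so the Leray spectral sequence for $p$ degenerates at $E_2$, and its edge map identifies with $p^* \colon H^*(\Mb_h) \hookrightarrow H^*(\Mb_{h,n})$, which is therefore injective. Hence it suffices to prove $H^{k+2(h-g)}(\Mb_h) \neq 0$, and by induction on $h-g$ it suffices to handle the case $h = g+1$.

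So assume $\alpha \in H^k(\Mb_g)$ is nonzero with $g \geq 2$. Consider the elliptic tail gluing map
\[
\xi \colon \Mb_{g,1} \times \Mb_{1,1} \to \Mb_{g+1},
\]
whose image is the boundary divisor $\delta_1 \subset \Mb_{g+1}$ of codimension $1$. Let $\pi \colon \Mb_{g,1} \to \Mb_g$ denote the forgetful map; by the previous paragraph applied with $g$ in place of $h$, the class $\pi^*\alpha \in H^k(\Mb_{g,1})$ is nonzero. Define
\[
\beta := \xi_*\bigl(\pi^*\alpha \boxtimes 1\bigr) \in H^{k+2}(\Mb_{g+1}).
\]

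To verify $\beta \neq 0$, apply the self-intersection formula
\[
\xi^*\xi_*(\gamma) = \gamma \cdot c_1(N_\xi),
\]
where $N_\xi$ is the normal bundle of the image of $\xi$. For gluings of moduli of stable curves, $N_\xi$ is canonically the tensor product of the tangent lines at the two branches of the node, so $c_1(N_\xi) = -\psi_p - \psi_q$, where $p$ is the marking on $\Mb_{g,1}$ and $q$ is the marking on $\Mb_{1,1}$. Therefore
\[
\xi^*\beta = -(\psi_p \cdot \pi^*\alpha) \boxtimes 1 \;-\; \pi^*\alpha \boxtimes \psi_q.
\]
The two terms lie in different Künneth components: the first in $H^{k+2}(\Mb_{g,1}) \otimes H^0(\Mb_{1,1})$, the second in $H^k(\Mb_{g,1}) \otimes H^2(\Mb_{1,1})$. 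Since $\pi^*\alpha \neq 0$ and $\psi_q$ generates $H^2(\Mb_{1,1}) \cong \cc$, the second term is nonzero, so $\xi^*\beta \neq 0$, whence $\beta \neq 0$.

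The main subtlety is that the gluing map $\xi$ is not literally a closed immersion on the Deligne--Mumford stack $\Mb_{g+1}$, because of automorphisms such as the elliptic involution on $\Mb_{1,1}$. Nevertheless, the self-intersection formula holds up to a nonzero rational constant in characteristic zero, which is harmless for detecting nonvanishing of $\beta$.
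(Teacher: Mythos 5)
Your setup (reduction to $n=0$, induction on $h-g$, and the class $\beta = \xi_*(\pi^*\alpha \boxtimes 1)$) matches the paper's, but the verification that $\beta \neq 0$ has a genuine gap. The self-intersection formula $\xi^*\xi_*(\gamma) = \gamma \cdot c_1(N_\xi)$ is not valid here, and the reason is not the stacky one you flag at the end. The map $\xi$ fails to be a closed immersion because the divisor $\delta_1 \subset \Mb_{g+1}$ self-intersects along the locus of curves with two elliptic tails: the fiber product $\Mb_A \times_{\Mb_{g+1}} \Mb_A$ has a second irreducible component, isomorphic to $\Mb_{1,1} \times \Mb_{g-1,2} \times \Mb_{1,1}$. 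The correct statement is the Graber--Pandharipande excess intersection formula, which adds to your two terms a third, \emph{additive} correction $\xi_{f*}\xi_g^*(\pi^*\alpha \otimes 1)$ coming from that extra component --- so the discrepancy is not ``a nonzero rational constant,'' and it cannot be waved away.

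This matters for your argument because the extra term lands precisely in the K\"unneth components you are using. Writing $\alpha'^*\alpha = \sum a_i \otimes b_i$ for the elliptic-tail pullback of $\alpha$, the correction contributes terms of the form $\psi$ times a class in $H^{k}(\Mb_{g,1})$ sitting in $H^2(\Mb_{1,1}) \otimes H^k(\Mb_{g,1})$, i.e.\ exactly the component containing your term $-\pi^*\alpha \boxtimes \psi_q$; without further input you cannot exclude cancellation there (nor in the $H^{k+2}\otimes H^0$ component). The paper circumvents this by pushing $\xi^*\beta$ forward along $\mathrm{id} \times \pi$ (integrating over the fiber of $\Mb_{g,1} \to \Mb_g$): this kills both the excess term and the $\pi^*\alpha \boxtimes \psi_q$ term, and what survives is $-(2g-2)\,(1 \otimes \alpha) \neq 0$, coming from the term $-(\psi_p\cdot\pi^*\alpha)\boxtimes 1$ that your K\"unneth argument discards. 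So the nonvanishing is real, but it is detected by the other term, and only after the excess component is accounted for.
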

\begin{proof}
   
   Because pullback by the morphism forgetting markings is injective, it suffices to prove the case $n=0$. The proof is by induction on $g$. Let $\pi\colon \Mb_{g,1}\rightarrow \Mb_{g}$ be the forgetful map, and define $A$ to be the stable graph of genus $g+1$ with one vertex of genus $g$ and one vertex of genus $1$, connected by one edge. Let $\zeta\colon \Mb_{A} =\Mb_{g,1}\times \Mb_{1,1}\rightarrow \Mb_{g+1}$ be the gluing map. Let $\gamma\in H^{k}(\Mb_{g})$ be a nonzero class. Then $\zeta_*(\pi^*\gamma\otimes 1)\in H^{k+2}(\Mb_{g+1})$, and we claim that this class is not zero. To prove this, we check that the pullback along $\zeta$ is nonzero, for which we use the excess intersection formula and generic $(A, B)$ structures as in \cite[Appendix A]{GraberPandharipande}. %
   
    Consider the fiber product diagram
    \[
    \begin{tikzcd}
{\coprod_{(\Gamma,f,g)\in \mathfrak{G}_{A,A}}\Mb_{\Gamma}} \arrow[d,"\xi_{g}"] \arrow[r,"\xi_{f}"] & \Mb_{A} \arrow[d, "\zeta"] \\
\Mb_{A} \arrow[r, "\zeta_A"]                                                       & \Mb_{g+1}.                
\end{tikzcd}
    \]
    The index set $\mathfrak{G}_{A,A}$ is the set of isomorphism classes of generic $(A,A)$ structures $(\Gamma,f,g)$. A generic $(A,A)$ structure is a stable graph $\Gamma$ together with two morphisms $f,g\colon \Gamma\rightarrow A$ such that every half edge of $\Gamma$ corresponds to a half edge of $A$ under the maps $f$ and $g$. An isomorphism of generic $(A,A)$ structures $(\Gamma,f,g)$ and $(\Gamma',f',g')$ is an isomorphism of stable graphs $\tau \colon \Gamma\rightarrow \Gamma'$ such that $f=f'\circ \tau$ and $g=g'\circ \tau$. The morphisms $\xi_g$ and $\xi_f$ are partial gluing maps determined by the morphisms $f,g \colon \Gamma\rightarrow A$ of stable graphs. 
    
    By excess intersection theory, as in \cite[Equation (11)]{GraberPandharipande}, we have
    \begin{equation}\label{excess}
    \zeta^*\zeta_*(\pi^*\gamma\otimes 1) = \sum_{(\Gamma,f,g)\in \mathfrak{G}_{A,A}} \xi_{f*}\bigg(\xi_g^*(\pi^*\gamma\otimes 1)\cdot \prod_{(h,h')\in \im(f)\cap \im(g)} (-\psi_h - \psi_{h'}) \bigg ).
    \end{equation}
    Here, the sum runs over edges of $\Gamma$ formed from the half edge pair $(h,h')$ that come from edges of $A$ under both $f$ and $g$.

    There is a unique generic $(A,A)$ structure on $A$ itself, given by two copies of the identity morphism. There is only one other graph with a generic $(A,A)$ structure, the graph $\Gamma$ with a central vertex of genus $g-1$ attached two two outer vertices, each of genus $1$. The $(A,A)$ structure is given by the pair of edge contraction morphisms $f,g \colon \Gamma\rightarrow A$. This is the unique $(A,A)$ structure up to $\Gamma$ isomorphisms on $\Gamma$. Therefore,
    \[
    \zeta^*\zeta_*(\pi^*\gamma\otimes 1) = (-\psi\cdot \pi^*\gamma \otimes 1 - \pi^*\gamma\otimes \psi) +\xi_{f*} \xi_g^*(\pi^*\gamma\otimes 1).
    \]

We have $\Mb_{\Gamma} = \Mb_{1,1}\times \Mb_{g-1,2}\times \Mb_{1,1}$, the gluing map $\xi_g = (\alpha \times \id)$, the gluing map $\xi_f = (\id \times \beta)$, where $\alpha\colon\Mb_{1,1}\times \Mb_{g-1,2}\rightarrow \Mb_{g,1}$ and $\beta \colon \Mb_{g-1,2}\times \Mb_{1,1}\rightarrow \Mb_{g,1}$ are the gluing maps along different markings. Therefore,
\[
\xi_{f*} \xi_g^*(\pi^*\gamma\otimes 1) = (\id \times \beta)_* (\alpha \times \id)^*(\pi^*\gamma\otimes 1) = (\id\times \beta)_*(\alpha^*\pi^*\gamma\otimes 1).
\]
We have the commutative diagram
\[
    \begin{tikzcd}
 \Mb_{1,1}\times \Mb_{g-1,2}\arrow[d,"\id \times \pi'"] \arrow[r,"\alpha"] & \Mb_{g,1} \arrow[d, "\pi"] \\
\Mb_{1,1}\times \Mb_{g-1,1} \arrow[r, "\alpha'"]                                                       & \Mb_{g},                
\end{tikzcd}
\]
and so 
\[
(\id\times \beta)_*(\alpha^*\pi^*\gamma\otimes 1) = (\id\times\beta)_*((\id\times \pi')^*(\alpha'^*\gamma)\otimes 1).
\]
We write $\alpha'^*\gamma = \sum a_i\otimes b_i$. We have
\[
(\id\times\beta)_*((\id\times \pi')^*\alpha'^*\gamma\otimes 1) = (\id\times\beta)_*\left((\id\times \pi')^*\left(\sum a_i\otimes b_i\right)\otimes 1\right) = \sum a_i \otimes \beta_*(\pi'^*b_i\otimes 1).
\]
Combining all the terms and being careful to put all the tensors in correct order, we have
\[
 \zeta^*\zeta_*(\pi^*\gamma\otimes 1) = -1\otimes \psi\cdot \pi^*\gamma - \psi\otimes \pi^*\gamma + \sum a_i \otimes \beta_*(\pi'^*b_i\otimes 1).
\]
We show that this is nonzero by pushing forward by $(\id \times \pi)$. We have
\[
(\id\times \pi)_* \zeta^*\zeta_*(\pi^*\gamma\otimes 1) = -1\otimes (2g-2)\gamma +  \sum a_i \otimes \pi_*\beta_*(\pi'^*b_i\otimes 1).
\]
To compute the last term, we use the diagram
\[
    \begin{tikzcd}
 \Mb_{g-1,2} \times \Mb_{1,1} \arrow[d," \pi'\times \id"] \arrow[r,"\beta"] & \Mb_{g,1} \arrow[d, "\pi"] \\
 \Mb_{g-1,1}\times \Mb_{1,1} \arrow[r, "\beta'"]                                                       & \Mb_{g}.                
\end{tikzcd}
\]
We thus have 
\begin{align*}
    -1\otimes (2g-2)\gamma +  \sum a_i \otimes \pi_*\beta_*(\pi'^*b_i\otimes 1) &= -1\otimes (2g-2)\gamma + \sum a_i \otimes \beta'_*(\pi'\times \id)_*(\pi'^*b_i\otimes 1) \\&= -1\otimes (2g-2)\gamma \\&\neq 0. \qedhere
\end{align*}

\end{proof}
Given $k \leq 7$, let $A_{2+2k,14-2k}$ be the stable graph of genus $2+2k$ with $14-2k$ legs that has a central genus $2$ vertex with $14$ half-edges together with $k$ genus $1$ vertices with $2$ half edges each so that each genus $1$ vertex attached twice to the central genus $2$ vertex. Let \[\xi_{A_{2+2k,14-2k}} \colon \Mb_{A_{2+2k,14-2k}}=\Mb_{2,14}\times (\Mb_{1,2})^{k}\rightarrow \Mb_{2+2k,14-2k}\] be the gluing map. Set $A= A_{16,0}$. Then $\Aut(A)\cong \ss_2\wr\ss_7$. Let $\eta\in H^{17,0}(\Mb_{2,14})^{\Aut(A)}$ be a generator for the one dimensional subspace of invariant forms. 
\begin{prop}\label{prop:g16}
    For $k \leq 7$, we have $\xi_{A_{2+2k,14-2k}*}(\eta\otimes 1\otimes\dots\otimes 1)\neq 0$. In particular, $H^{45}(\Mb_{16})$ is nonzero.
\end{prop}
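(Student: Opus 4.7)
The plan is to prove the proposition by induction on $k$, with the case $k=0$ being immediate: $\xi_{A_{2,14}}$ is the identity on $\Mb_{2,14}$, and $\eta \neq 0$ by definition. Since the case $k=7$ (and hence $n=0$, $g=16$) yields the nonvanishing of $H^{45}(\Mb_{16})$, this will complete the proof.

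For the inductive step, the idea is to factor the gluing map $\xi_{A_{2+2k,14-2k}}$ by attaching one ``theta''-component at a time. Concretely, there is a factorization
\[
\xi_{A_{2+2k,14-2k}} = \xi_B \circ \bigl(\xi_{A_{2+2(k-1),16-2k}} \times \mathrm{id}_{\Mb_{1,2}}\bigr),
\]
where $\xi_B \colon \Mb_{2+2(k-1),16-2k} \times \Mb_{1,2} \to \Mb_{2+2k,14-2k}$ is the boundary gluing morphism attaching a new genus-$1$ component via two edges that identify its two markings with two chosen markings $a,b$ of the first factor. By functoriality of proper pushforward, this yields
\[
\eta_k := \xi_{A_{2+2k,14-2k}*}(\eta \otimes 1^{\otimes k}) = \xi_{B*}(\eta_{k-1} \otimes 1),
\]
where $\eta_{k-1} \neq 0$ by the inductive hypothesis. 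Thus it suffices to show that if $\gamma \in H^{\bullet}(\Mb_{g,n})$ is nonzero, then $\xi_{B*}(\gamma \otimes 1) \neq 0$ in $H^{\bullet + 4}(\Mb_{g+2,n-2})$.

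To prove this reduction, I would compute $\xi_B^{*} \xi_{B*}(\gamma \otimes 1)$ via the excess intersection formula used in the proof of Lemma \ref{lem:increaseg}, summing over generic $(B,B)$-structures. The identity structure contributes the main term $(\gamma \otimes 1) \cdot (-\psi_a - \psi_c)(-\psi_b - \psi_d)$, where $c,d$ denote the two markings of $\Mb_{1,2}$. Projecting back to $\Mb_{g,n}$ by integrating over the $\Mb_{1,2}$ factor and expanding, only the summand $\gamma \otimes \psi_c\psi_d$ carries top degree on $\Mb_{1,2}$ and survives, contributing $\gamma \cdot \int_{\Mb_{1,2}} \psi_c\psi_d = \tfrac{1}{24}\,\gamma \neq 0$.

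The main obstacle is verifying that the contributions from the remaining generic $(B,B)$-structures -- the swap of the two edges of $B$, together with refinements producing nontrivial $\Gamma$'s with additional vertices -- do not cancel this main contribution. Following the template of Lemma \ref{lem:increaseg}, these extra terms are pushforwards from deeper boundary strata, and should vanish after a further carefully chosen composition with forgetful maps, analogous to the vanishing $\pi_{*}\beta_{*}(\pi'^{*} b \otimes 1) = 0$ employed in the proof of that lemma. This ensures that the witnessed class $\tfrac{1}{24}\,\gamma$ is not destroyed, completing the induction.
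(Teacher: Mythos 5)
There is a genuine gap in the inductive step. The reduction you propose --- ``if $\gamma \neq 0$ then $\xi_{B*}(\gamma \otimes 1) \neq 0$'' --- is false in general. The graph $B$ has an automorphism swapping its two edges, acting on $\Mb_{g,n}\times \Mb_{1,2}$ by a transposition $\sigma$ of the markings $a,b$ (times the swap on $\Mb_{1,2}$), and $\xi_B \circ (\sigma\times\tau) = \xi_B$ forces $\xi_{B*}\bigl((\gamma - \sigma_*\gamma)\otimes 1\bigr) = 0$; any class antisymmetric in $a,b$ is therefore killed. So the lemma your induction rests on cannot hold without symmetry hypotheses, and even for the symmetric classes $\eta_{k-1}$ the issue you flag as ``the main obstacle'' is precisely where the content lies: the correction terms in the excess intersection formula involve restrictions $\xi_g^*(\eta_{k-1}\otimes 1)$ of $\eta_{k-1}$ to boundary strata, and $\eta_{k-1}$, being itself a pushforward from a boundary stratum, has no reason to restrict to zero there. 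The analogy with $\pi_*\beta_*(\pi'^*b\otimes 1)=0$ from Lemma \ref{lem:increaseg} does not carry over, because that cancellation exploited the one-dimensional fibers of a forgetful map, not a structural feature of two-edge attachments.

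The paper avoids the induction entirely, and for a reason your setup destroys. It suffices to treat $k=7$, since $\xi_{A_{16,0}}$ factors through each $\xi_{A_{2+2k,14-2k}}$, so nonvanishing of the total pushforward formally implies nonvanishing of every intermediate one (your induction runs in the hard direction). For $k=7$ one computes $\xi_A^*\xi_{A*}(\eta\otimes 1 \otimes \cdots\otimes 1)$ once, and the decisive input is that $\eta \in H^{17,0}(\Mb_{2,14})$ pulls back to zero on \emph{every} boundary divisor of $\Mb_{2,14}$ (Lemmas \ref{lem:genus2togenus1} and \ref{lem:selfglue17}, together with the fact that $H^{17,0}(\Mb_{2,m})=0$ for $m<14$). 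This kills every generic $(A,A)$-structure whose underlying graph is not $A$ itself, since any such structure forces the genus $2$ vertex to degenerate; the structures supported on $A$ contribute $|\Aut(A)|$ identical terms because $\eta$ was chosen $\Aut(A)$-invariant, and projecting to $\Mb_{2,14}$ isolates a nonzero multiple of $\eta$. This boundary-vanishing property belongs to $\eta$ on $\Mb_{2,14}$ alone and is not inherited by the intermediate classes $\eta_{k-1}$, which is why attaching one theta-component at a time cannot reuse the mechanism.
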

\begin{proof}
    It suffices to show that $\xi_{A*}(\eta\otimes 1\otimes\dots\otimes 1)\in H^{45}(\Mb_{16})$ is nonzero. To do so, we show $(\mathrm{pr}_{1*}\xi_{A}^*\xi_{A*}(\eta\otimes 1\otimes\dots\otimes 1))\neq 0$. We use the excess intersection formula, as in the proof of Lemma \ref{lem:increaseg}. Consider the fiber product diagram
    \[
    \begin{tikzcd}
{\coprod_{(\Gamma,f,g)\in \mathfrak{G}_{A,A}}\Mb_{\Gamma}} \arrow[d,"\xi_{g}"] \arrow[r,"\xi_{f}"] & \Mb_{A} \arrow[d, "\xi_A"] \\
\Mb_{A} \arrow[r, "\xi_A"]                                                       & \Mb_{16}.                
\end{tikzcd}
    \]
    
    By excess intersection theory, 
    \begin{equation}\label{eq:excess2}
    \xi_{A}^*\xi_{A*}(\eta\otimes 1\otimes\dots\otimes 1) = \sum_{(\Gamma,f,g)\in \mathfrak{G}_{A,A}} \xi_{f*}\bigg (\xi_g^*(\eta\otimes 1\otimes\dots\otimes 1)\cdot \prod_{(h,h')\in \im(f)\cap \im(g)} (-\psi_h - \psi_{h'}) \bigg ).
    \end{equation}
    As in the proof of Lemma \ref{lem:increaseg}, the index set $\mathfrak{G}_{A,A}$ is the set of isomorphism classes of generic $(A,A)$ structures $(\Gamma,f,g)$.
    The product runs over edges of $\Gamma$ formed from the half edge pair $(h,h')$ that come from edges of $A$ under both $f$ and $g$.

    We first consider the generic $(A,A)$ structures $(A,f,g)$. Up to isomorphism, we may assume $f$ is the identity and $g$ is any automorphism of $A$. Therefore, there are $|\Aut(A)|$ %
    generic $(A,A)$ structures on $A$. We have $\xi_g^*(\eta\otimes 1\otimes\dots\otimes 1)=\eta\otimes 1\otimes\dots\otimes 1$ for all $g$ because $\eta$ was chosen to be $\Aut(A)$ invariant. Therefore, the contribution to equation \eqref{eq:excess2} from the generic $(A,A)$ structures  of the form $(A,f,g)$ is
    \[
    |\Aut(A)| (\eta\otimes 1\otimes\dots\otimes 1) \prod_{(h,h')\in E(A)} (-\psi_h-\psi_{h'}).
    \]
    After expanding the product, only the term of the form
    \[
    |\Aut(A)|(\eta\otimes \psi_1\psi_2\otimes \dots \otimes \psi_1\psi_2)
    \]
    with two $\psi$ classes on each of the genus $1$ vertices can survive the pushforward to $\Mb_{2,14}$. After pushing forward to $\Mb_{2,14}$, we obtain a nonzero multiple of $\eta$. 

    Now we consider the contributions from generic $(A,A)$ structures whose underlying stable graph $\Gamma$ is not isomorphic to $A$. First, suppose that the preimage in $\Gamma$ of the genus $2$ vertex of $A$ under $g$ is of cardinality at least $2$. Then $\xi_g^*(\eta\otimes 1\otimes\dots\otimes 1)=0$ because the form $\eta$ pulls back trivially to all boundary divisors of $\Mb_{2,14}$. In particular, all such generic $(A,A)$ structures $(\Gamma,f,g)$ contribute trivially to \eqref{eq:excess2}.

    Finally, we show that there are no other types of generic $(A,A)$ structures $(\Gamma, f,g)$. Suppose that $\Gamma$ has a unique vertex of genus $2$, which is the preimage under $g$ or $f$ of the genus $2$ vertex of $A$, but that $\Gamma$ is not isomorphic to $A$. Then there must be some genus $1$ vertex $w$ of $A$ that is replaced in $\Gamma$ by a stable graph of genus $1$ with two legs and at least one edge. One checks that there are no \emph{generic} $(A,A)$ structures in this situation, as the edge contraction $f:\Gamma\rightarrow A$ will have to contract some of the same edges as $g:\Gamma\rightarrow A$.

\end{proof}
\begin{proof}[Proof of Theorem \ref{thm:odd}]
    The first statement follows immediately from Lemma \ref{lem:increaseg} and Proposition \ref{prop:g16}. The second statement follows from the first because $\Mb_g$ is smooth and proper, so the cohomology is of pure weight.
\end{proof}

\bibliographystyle{amsplain}
\bibliography{refs}
\end{document}